\newcommand{\vr}{\rho}
\newcommand{\Div}{{\rm div}}
\newcommand{\Dt}{\frac{ {\rm d}}{\dt}}
\newcommand{\vu}{\vc{u}}
\newcommand{\vc}[1]{{\bf #1}}
\newcommand{\Grad}{\nabla}
\newcommand{\lr}[1]{\left( #1 \right)}
\newcommand{\ep}{\varepsilon}
\newcommand{\eq}[1]{\begin{equation}
\begin{split}
#1
\end{split}
\end{equation}}
\newcommand{\eqh}[1]{\begin{equation*}
\begin{split}
#1
\end{split}
\end{equation*}}
\newcommand{\intO}[1]{\int_{\RR^d} #1 \ \dx}
\newcommand{\iintO}[1]{\int_{\RR^{2d}} #1 \ \dx\, \dy}
\newcommand{\intRd}[1]{\int_{{\mathbb{R}}^d} #1 \ \dx}
\newcommand{\intRdB}[1]{\int_{{\mathbb{R}}^d} \left( #1 \right) \ \dx}
\newcommand{\intT}[1]{\int_0^T #1 \ \dt}
\newcommand{\intTO}[1]{\int_0^T\!\!\!\! \int_{\RR^d} #1 \ \dxdt}
\newcommand{\iintTO}[1]{\int_0^T\!\!\!\! \int_{\RR^{2d}} #1 \ \dx\, \dy\, \dt}
\newcommand{\intTOB}[1]{ \int_0^T\!\!\!\! \int_{\RR^d} \left( #1 \right) \ \dxdt}
\newcommand{\dx}{{\rm d} {x}}
\renewcommand{\dh}{{\rm d} {h}}
\newcommand{\dy}{{\rm d} {y}}
\newcommand{\dt}{{\rm d} t }
\newcommand{\dxdt}{\dx \, \dt}
\newcommand{\gamman}{{\gamma_n}}
\newtheorem{theorem}{Theorem}[section]
\newtheorem{lemma}[theorem]{Lemma}
\newtheorem{corollary}[theorem]{Corollary}
\newtheorem{definition}[theorem]{Definition}
\newtheorem{proposition}[theorem]{Proposition}
\newtheorem{remark}[theorem]{Remark}
\def\thetheorem{\thesection.\arabic{theorem}}
\def\thesection{\arabic{section}}
\def\beq{\begin{equation}\displaystyle}
\def\eeq{\end{equation}}
\def\bel{\begin{equation} \displaystyle \begin{array}{l} }
\def\eel{\end{array} \end{equation} }
\def\bell{\begin{equation} \displaystyle \begin{array}{ll}  }
\def\eell{\end{array} \end{equation} }
\def\bea{\begin{eqnarray}}
\def\eea{\end{eqnarray} }
\def\bean{\begin{eqnarray*}}
\def\eean{\end{eqnarray*} }
\newenvironment{proof}{\noindent{\bf Proof.~}}
{{\mbox{}\hfill {\small \fbox{}}\\}}
\renewcommand\appendix{\bigskip {\noindent \Large \bf Appendix}
  \setcounter{section}{0}%
  \setcounter{subsection}{0}%
\setcounter{equation}{0}%
\setcounter{theorem}{0}%
\def\thetheorem{A.\arabic{theorem}}
\def\theequation {A.\arabic{equation}}}
\newcommand{\dv}{\mathop{\rm div}\nolimits}
\def\NN{\mathbb{N}}
\def\RR{\mathbb{R}}
\def\ds{\displaystyle}
\def\bs{\bigskip}
\def\eps{\varepsilon}
\def\bar#1{{\overline #1}}
\def\pa{\partial}
\def\calE{{\mathcal E}}
\def\calM{{\mathcal M}}
\def\calR{{\mathcal R}}
\def\calD{{\mathcal D}}
\def\calJ{{\mathcal J}}
\def\calK{{\mathcal K}}
\begin{document}

\title{Incompressible limit of the Navier-Stokes model with a growth term}

\author{Nicolas Vauchelet\thanks{Universit\'e Paris 13, Sorbonne Paris Cit\'e, CNRS UMR 7539, 
Laboratoire Analyse G\'eom\'etrie et Applications,
93430 Villetaneuse, France.  Email: \texttt{vauchelet@math.univ-paris13.fr}}, 
Ewelina Zatorska\thanks{Imperial College London, Department of Mathematics,  London SW7 2AZ, United Kingdom. Email: \texttt{e.zatorska@imperial.ac.uk}}
\footnote{Part of this work was done while N.V. was a CNRS fellow at Imperial College London, 
he is very grateful to the CNRS and to Imperial College from its hospitality.
N.V. acknowledges partial support from the ANR blanche project
Kibord No ANR-13-BS01-0004 funded by the French Ministry of Research. E.Z. was supported by the the Department of Mathematics, Imperial College, through a Chapman Fellowship, and by the National Science Centre grant 2014/14/M/ST1/00108
(Harmonia).}
}

\maketitle

\begin{abstract}
Starting from isentropic compressible Navier-Stokes equations with growth term in the continuity equation, we rigorously justify that performing an incompressible limit one arrives to the two-phase free boundary fluid system.
\end{abstract}

\bs

{\bf Keywords: } Compressible Navier-Stokes equation, asymptotic analysis, cell growth models.

{\bf 2010 AMS subject classifications: } 35Q35; 76D05; 35B40, 92C50.

\bs

\section{Introduction}
The purpose of this work is to analyze the Navier-Stokes equations that generalize the fluid-based models of tumors. In the mathematical literature, tumor growth has been modelled using various  microscopic and macroscopic models \cite{BD}. At the macroscopic level, we may distinguish between models which describe the tumor growth through the dynamics of its cell density, and free boundary models in which tumor is described by its geometric domain subjected to mechanical constrains \cite{friedman}.
From the mechanical viewpoint living tissues may be considered as fluids \cite{Chaplain}. In the simplest approach the dynamics of cell density is governed by cell division and mechanical pressure. Depending on the modelling assumption, and the complexity of the model, mechanical pressure is incorporated in the fluid velocity through Darcy's law, Stokes' law, Brinkman's law or Navier-Stokes' law (see e.g. \cite{Sherratt, Roose, lowengrub,bordeaux,CaiWu}). Notice that Darcy's law, Stokes's law or Brinkman's law may be derived at least formally from Navier-Stokes' law, and so, the latter may be considered as a generalization of the other models.

In this paper we perform mathematical analysis of the Navier-Stokes model with the growth term as for the models of tumor. We are particularly interested in the stiff pressure law limit, often referred to as incompressible limit. The limiting model is a free boundary compressible/incompressible system of fluid equations. 
Derivation of the free boundary models from cell mechanical models has been the subject of many recent contributions in the field of tumor growth modelling \cite{PQV,PQTV,PV,KPS,KP,MPQ}. These models identify tumor with an area of the incompressible (constrained) fluid, while the surrounding healthy tissue can be viewed as a compressible (unconstrained) fluid. 
In almost all aforementioned  works, for simplicity, Darcy's law is used as a closure relation for the system. This means that the velocity is proportional to the gradient of the mechanical pressure, which results in a porous-like type of system. In \cite{PV}, Birkman's law was used to model the tumor as a visco-elastic medium, see also \cite{BiWa}, and \cite{Prost} for the model of growth of tissue in which cell division and apoptosis introduce stress sources that, in general, are anisotropic.
The aim of this work is to extend these works to more general relation between the velocity and the pressure, namely the Navier-Stokes equation. The unknowns are $\rho(t,x)$, the cell density, and  $\vu(t,x)$, the macroscopic velocity field, depending on the time $t>0$ and the position $x\in \RR^d$. 
Our starting system reads as follows
\begin{subequations}\label{sys:main}
\begin{align}
& \pa_t \rho + \dv(\rho \vu) = \rho G(p),  \label{eq:NS1} \\
& \pa_t(\rho\vu) + \dv\lr{\vr\vu\otimes\vu} - \mu \Delta \vu - \xi \nabla \dv \vu + \nabla p = \vr\vu G(p),  \label{eq:NS2}
\end{align}
 \end{subequations}
where $p$ is the pressure and $\mu>0$, $\mu+\xi>0$ are the viscosity coefficients.
For future use, we provide also the nonconservative form of the equation \eqref{eq:NS2}, which reads
\begin{equation}\label{eq:NS2nc}
\vr \left(\pa_t \vu + \vu \cdot \nabla \vu\right) - \mu \Delta \vu - \xi \nabla \dv \vu + \nabla p = 0.
\end{equation}
The right hand side in \eqref{sys:main} represents the growth term depending
on the pressure,  we assume that
\begin{equation}\label{hypG}
G(p)=G_0(P_M-p),\quad G_0,  P_M>0,
\end{equation}
the quantity $P_M$ is often refered to as the homeostatic pressure \cite{Prost}.
As in \cite{PQV,PQTV,PV}, we choose  the barotropic pressure law
\eq{ p = \rho^\gamma,  \label{eq:p}}
with the exponent $\gamma$ that might be very big. 

The system \eqref{sys:main} is complemented with initial data $\rho(0,x) = \rho^0(x)$, $(\vr\vu)(0,x)=\vc{m}^0(x)$,
which are chosen such that for any large enough $\gamma$,
\eq{\label{hypini}
0 \leq \rho^0 \leq 1,\quad \rho^0\in L^1(\RR^d), \quad p^0=(\rho^0)^\gamma \in L^1(\RR^d),\\
\intRdB{\frac{1}{2}\frac{ |\vc{m}^0|^2}{\vr^0} + \frac{1}{\gamma-1} (\rho^0)^\gamma} <+\infty,
}
uniformly with respect to $\gamma$. 
Moreover, we prescribe the values of $\vu$ and $\rho$ at infinity:
\eq{\label{hypbound} 
\vu\to \vc{0},\  \vr\to 0, \quad \mbox{for}\ |x|\to \infty,}
with the relevant compatibility condition for the initial data.

When $\gamma$ is fixed, the system \eqref{sys:main} is the compressible Navier-Stokes system with additional terms on the right hand side of the continuity equation \eqref{eq:NS1} and in the momentum equation \eqref{eq:NS2}. The purpose of this paper is to rigorously justify the  so-called stiff pressure law limit, i.e. $\gamma\to+\infty$ which leads to the two phase compressible/incompressible system
\begin{subequations}\label{sys:main2}
\begin{align}
& \pa_t \rho_\infty + \dv(\rho_\infty \vu_\infty) = \rho_\infty G(p_\infty), \label{eqlim1}  \\
& \pa_t(\rho_\infty\vu_\infty) +\dv( \rho_\infty\vu_\infty\otimes\vu_\infty) - \mu \Delta \vu_\infty - \xi \nabla \dv \vu_\infty + \nabla p_\infty =  \rho_\infty \vu_\infty G(p_\infty), \label{eqlim2} \\
&0\leq \rho_\infty\leq 1,\label{eqlim2a}\\
& p_\infty (1- \rho_\infty) = 0.  \label{eqlim4}
\end{align}
 \end{subequations}
This system is complemented with the same initial data $\rho^0, \vc{m}^0$ as system \eqref{sys:main}.

 The limit of this type was first considered for the compressible Navier-Stokes equations without any additional growth terms by {\sc Lions} and {\sc Masmoudi} \cite{LM}. 
Similar limit passage was also recently investigated for polymeric fluids \cite{DT17}. We would also like to remark that the two-phase models of the type \eqref{sys:main2} can be obtained as the limit of the compressible Navier-Stokes system with the singular pressure, see \cite{PZ, DMZ}.
Compared to the case without growth term, the main difficulty lies in obtaining strong convergence for the density. Indeed classical approach developed by {\sc Lions} \cite{Lions2} and {\sc Feireisl} \cite{EF2001} fails precisely due to the presence of the growth term. Therefore, we follow a recent strategy proposed by {\sc Bresch \& Jabin} \cite{BJ} for the compressible Navier-Stokes equation (see also \cite{BJ_short}) and adapt it to the case at hand.
 
Before stating our main result, let us explain formally  how the system \eqref{sys:main2} may be obtained from  \eqref{sys:main}. We assume existence of a sequence denoted by $n$, such that for $n\to \infty$, $\gamma_n\to \infty$, and $\rho_n\to\rho_\infty$, $\vu_n\to\vu_\infty$, $\vr_n^{\gamma_n}\to p_\infty$ strongly. Writing \eqref{eq:p} as $p_n=\vr_n p_n^{\frac{\gamma_n-1}{\gamma_n}}$ and letting $n\to\infty$ we check that $\rho_\infty$, $p_\infty$ satisfy the relation \eqref{eqlim4}.

 Let us now introduce the set  $\Omega = \{p_\infty >0\} \subset \RR^d$, we have two cases:
\begin{itemize}
\item On $\RR^d\setminus\Omega$, we have $p_\infty=0$, thus \eqref{eqlim1}--\eqref{eqlim2} reduces to
\eq{\label{sys:compressible}
& \pa_t \rho_\infty + \dv(\rho_\infty \vu_\infty) = \rho_\infty G_0P_M,  \\
& \pa_t(\rho_\infty\vu_\infty) + \dv(\rho_\infty\vu_\infty\otimes \vu_\infty) - \mu \Delta \vu_\infty - \xi \nabla \dv \vu_\infty = \rho_\infty \vu_\infty G_0P_M,
}
which is the compressible pressureless Navier-Stokes system with the source term.
\item On $\Omega$, we deduce from \eqref{eqlim4} that we have $\rho_\infty=1$.
  Then \eqref{eqlim1}--\eqref{eqlim2a} reduces to
\eq{\label{sys:incompressible}
& \dv \vu_\infty = G(p_\infty),\\
& \pa_t\vu_\infty + \vu_\infty\cdot \nabla \vu_\infty - \mu \Delta \vu_\infty - \xi \nabla \dv \vu_\infty + \nabla p_\infty = \vc{0},  }
which might be seen as the incompressible Navier-Stokes system.
Note that from the expression of $G$ in \eqref{hypG}, we may rewrite the last system as
\begin{align*}
& \pa_t\vu_\infty + \vu_\infty\cdot \nabla \vu_\infty - \mu \Delta \vu_\infty - (\xi+\frac{1}{G_0}) \nabla \dv \vu_\infty = \vc{0},   \\
& p_\infty = P_M - \frac{1}{G_0} \dv \vu_\infty.
\end{align*}
\end{itemize}

Therefore the limit system \eqref{sys:main2} reveals the features of both: compressible and incompressible fluid equations with the free interphase separating $\Omega$ from $\RR^d\setminus\Omega$.

We conclude the introduction by explaining the link between the system \eqref{sys:main2} and the Hele-Shaw system for tumor growth. Neglecting the acceleration term and assuming that the viscous resisting force is proportional to the velocity, then the momentum equation in system \eqref{sys:incompressible} reduces to
$$
\nu_0 \vu_\infty + \nabla p_\infty = 0.
$$
This is the so-called Darcy's law. Inserting this equation into the first equation in \eqref{sys:incompressible}, we recover the Hele-Shaw system for tumor growth, 
$
- \Delta p_\infty = \nu_0 G(p_\infty) \mbox{ on } \Omega.
$

\section{The main result}
Our main result concerns the convergence of weak solutions of system \eqref{sys:main} to weak solutions of the system  \eqref{sys:main2}. Before formulating the main theorem let us first specify the notion of solutions.
\begin{definition}
[{Weak solution of the primitive system}]\label{weaksol}
Suppose that the initial conditions are as in (\ref{hypini}).
{Let $T>0$}.
We say that the couple  $(\vr,\vu)$ is a weak solution of the problem \eqref{sys:main}, \eqref{hypG}, \eqref{eq:p}, {on $[0,T]$} with the boundary conditions \eqref{hypbound}, if 
\begin{equation*}
(\vr,\vu) \in  L^\infty(0,T; L^\gamma({ \RR^d})) \times L^2(0,T;W^{1,2}(\RR^d)),
\end{equation*}
and we have:
\begin{description}
\item{(i)}
$ \vr \in C_w([0,T];L^\gamma(\RR^d))$, and \eqref{eq:NS1} is satisfied in the weak sense
\eq{
\intO{ \vr(T,\cdot) \varphi(T,\cdot)}- \intO{\vr^{0} \varphi(0,\cdot)}=
 \intTO{ \Big(\vr \partial_t \varphi + \vr \vu \cdot \nabla \varphi+\rho G(p)\varphi\Big)},
}
for all test functions $ \varphi \in C_c ^1([0,T]\times\RR^d)$;
\item{(ii)} $ \vr \vu \in C_w([0,T];L^{\frac{2\gamma}{\gamma+1}}(\RR^d))$, and (\ref{eq:NS2}) is satisfied in the weak sense
\eq{
&\intO{ (\vr \vu) (T,\cdot) \cdot \bm{\psi}(T,\cdot)}- \intO{ \vc{m}^0 \cdot \bm{\psi}(0,\cdot)}\\
&\quad=
\intTOB{\vr \vu \cdot \partial_t \bm{\psi}  + \vr \vu \otimes \vu : \nabla \bm{ \psi} }+\intTO{\vr G(p)\vu\cdot\bm{\psi}} \\
&\qquad+ \intTO{p(\vr)\Div \bm{\psi}} - \intTOB{\mu\Grad\vu : \Grad\bm{ \psi} +\xi \,\Div\vu \,\Div\bm{\psi}}  ,
}
for all test functions $\bm{\psi} \in C_c^1([0,T] \times \RR^d)$;
\item{(iii)}
{there exists a constant $C>0$} such that the energy inequality
\eq{
{ {\cal E}(t)  + \int_{0}^t{ \calJ(s)}{\rm d}s\leq \lr{{\cal E}(0)+Ct}e^{G(0)t}}
}
holds for a.a ${t\in (0,T)}$, where
\begin{align}
&{\cal E}(t)={\cal E}(\vr,\vu)(t) = \intO{\Big(\frac 12 \vr |\vu|^2 +\frac{1}{\gamma-1}\rho^\gamma\Big)},\label{energy} \\
&\calJ(t)=\calJ(\vr,\vu)(t) = \intRdB{\mu |\nabla \vu|^2 + \xi (\dv \vu)^2}. \label{dissip}
\end{align}
\end{description}
\end{definition}
\begin{definition}
[Weak solution to the limiting system \eqref{sys:main2}]\label{weaksollim}
Let $T>0$. Let $\rho^0 \in L^1(\RR^d)$ such that $0\leq \rho^0 \leq 1$, $\int_{\RR^d} \frac{|m^0|^2}{\rho^0}\,dx <+\infty$. 
We say that the triple $(\vr_\infty,\vu_\infty,p_\infty)$ is a weak solution to \eqref{sys:main2} on $[0,T]$ with initial condition $\rho(0,\cdot) = \rho^0$, $(\vr\vu)(0,\cdot)=\vc{m}^0$, if
$$
\vr_\infty \in  L^1\cap L^\infty((0,T)\times\RR^d),\quad \vu_\infty \in L^2(0,T;W^{1,2}(\RR^d)), 
\quad p_\infty \in L^2((0,T)\times\RR^d),
$$
and we have:
\begin{description}
\item{(i)} equation \eqref{eqlim1} is satisfied in the weak sense
\eq{
\intO{ \vr_\infty(T,\cdot) \varphi(T,\cdot)}- \intO{\vr^{0} \varphi(0,\cdot)}=
 \intTO{ \Big(\vr_\infty \partial_t \varphi + \vr_\infty \vu_\infty \cdot \nabla \varphi+\rho_\infty G(p_\infty)\varphi\Big)},
}
for all test functions $ \varphi \in C_c^1([0,T]\times\RR^d)$;
\item{(ii)} equation \eqref{eqlim2} is satisfied in the weak sense
\eq{
&\intO{ (\vr_\infty \vu_\infty) (T,\cdot) \cdot \bm{\psi}(T,\cdot)}- \intO{ \vc{m}^0 \cdot \bm{\psi}(0,\cdot)}\\
&\quad=
\intTOB{\vr_\infty \vu_\infty \cdot \partial_t \bm{\psi}  + \vr \vu_\infty \otimes \vu_\infty : \nabla \bm{ \psi} }+\intTO{\vr_\infty G(p_\infty)\vu_\infty\cdot\bm{\psi}} \\
&\qquad+ \intTO{p_\infty\Div \bm{\psi}} - \intTOB{\mu\Grad\vu_\infty : \Grad\bm{ \psi} +\xi \,\Div\vu_\infty \,\Div\bm{\psi}},
}
for all test functions $\bm{\psi} \in C_c^1([0,T] \times \RR^d)$;
\item{(iii)} equations \eqref{eqlim2a} and \eqref{eqlim4} hold a.e. in $(0,T)\times\RR^d$.
\end{description}
\end{definition}

The compactness of the sequence of weak solutions to system \eqref{sys:main} with $\gamma_n\to \infty$ is guaranteed by our main theorem.
\begin{theorem}\label{th:main}
{Let $T>0$.}
Let $\gamma_n$ be such that $\gamma_n\to \infty$ for $n\to\infty$. Let $\{(\vr_n,\vu_n)\}_{n=1}^\infty$ be a sequence of weak solutions to system \eqref{sys:main} with $p(\vr_n)=\vr_n^{\gamma_n}$, in the sense of Definition \ref{weaksol}.
Then, up to extraction of a subsequence, the limit of  $\{(\vr_n,\vu_n, \vr_n^{\gamma_n})\}_{n=1}^\infty$ for $n\to\infty$ solves \eqref{sys:main2} in the sense of {Definition \ref{weaksollim}}.
More precisely, there exist $\rho_\infty,\vu_\infty,p_\infty$ such that:
\eqh{
&0\leq \vr_\infty\leq 1,\\
&\vr_n\to\vr_\infty\quad\mbox{strongly in } L^q((0,T)\times\RR^d),\ \mbox{for any}\  q\geq 1,\\
&\vu_n\rightharpoonup\vu_\infty\quad\mbox{weakly in } L^2(0,T; H^1_{loc}(\RR^d)),\\
&\vr^{\gamma_n}_n\rightharpoonup p_\infty\quad\mbox{weakly in } L^2((0,T)\times\RR^d) .
}
\end{theorem}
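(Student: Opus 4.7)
The plan is to derive uniform a priori estimates from the energy inequality, use them to extract weak and strong limits, pass to the limit in each term of the weak formulation, and finally identify the complementarity relation $p_\infty(1-\vr_\infty)=0$. The main obstacle will be the strong compactness of $\vr_n$: the classical Lions--Feireisl approach based on the effective viscous flux identity and renormalized solutions fails here because of the growth term $\vr G(p)$, so I will follow the Bresch--Jabin non-local compactness method, as announced in the introduction.

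First I would use the energy inequality \eqref{energy}--\eqref{dissip} together with the hypotheses \eqref{hypini} to deduce the uniform bounds $\sqrt{\vr_n}\vu_n\in L^\infty(0,T;L^2(\RR^d))$, $\vr_n^{\gamma_n}/(\gamma_n-1)\in L^\infty(0,T;L^1(\RR^d))$, and $\Grad\vu_n\in L^2((0,T)\times\RR^d)$. A Chebyshev-type argument applied to the pressure bound yields $|\{\vr_n>1+\ep\}|\leq C(\gamma_n-1)/(1+\ep)^{\gamma_n}\to 0$, which implies $\vr_\infty\leq 1$ a.e. To upgrade to an $L^2$ bound on $p_n$ (needed both to pass to the limit in $\Grad p_n$ in \eqref{eq:NS2} and to identify \eqref{eqlim4}), I would test the weak momentum formulation with a Bogovskii-type vector field $\bm\psi=\Grad(-\Delta)^{-1}(\chi p_n)$, suitably truncated to handle the unboundedness of $\RR^d$, and absorb the viscous and convective contributions via Gronwall, obtaining $\|p_n\|_{L^2((0,T)\times\RR^d)}\leq C$. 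Together with time compactness for $\vr_n$ and $\vr_n\vu_n$ deduced from \eqref{eq:NS1}--\eqref{eq:NS2} and Aubin--Lions, this delivers all the convergences announced in the theorem except the strong convergence of $\vr_n$.

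The core step is the strong compactness of $\vr_n$ in $L^1_{\rm loc}$, which then bootstraps to every $L^q$ since $\vr_n$ is uniformly bounded up to a vanishing excess. Following Bresch--Jabin, I would introduce a family of radial mollifying kernels $K_h(x-y)$ and a weight $w_n(t,x,y)$ transported along a regularized velocity field, designed so that the effect of the growth term $\vr_n G(p_n)$ is absorbed into an exponential prefactor. Differentiating in time
\[
\mathcal{I}_h(t) = \iintO{K_h(x-y)\,|\vr_n(t,x)-\vr_n(t,y)|^2\, w_n(t,x,y)}
\]
along the flow, using the continuity equation \eqref{eq:NS1} at both $x$ and $y$, the dissipation bound on $\Grad\vu_n$, and the $L^2$ bound on $p_n$ to close the estimate, one obtains uniform smallness of $\mathcal{I}_h$ as $h\to 0$. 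This translates into spatial equicontinuity of $\vr_n$, which combined with the time compactness yields $\vr_n\to\vr_\infty$ strongly in $L^q((0,T)\times\RR^d)$ for every $q\geq 1$.

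With strong convergence of $\vr_n$ and weak convergence of $(\vu_n,\Grad\vu_n,p_n)$ in hand, every nonlinear product $\vr_n\vu_n$, $\vr_n\vu_n\otimes\vu_n$, $\vr_n G(p_n)$, and $\vr_n\vu_n G(p_n)$ passes to the limit in the distributional sense, producing the weak formulations of Definition \ref{weaksollim}. Finally, to prove \eqref{eqlim4}, I would write $\vr_n^{\gamma_n}(1-\vr_n)=p_n-\vr_n p_n$: the right-hand side converges weakly in $L^2$ to $p_\infty(1-\vr_\infty)$ by the preceding convergences, while the uniform pressure bound together with $\vr_n\leq 1+o(1)$ forces the left-hand side to tend to zero in $L^1$ (it is pointwise dominated by $\vr_n^{\gamma_n}$, which vanishes on $\{\vr_\infty<1\}$, and the contribution of $\{\vr_n>1\}$ is negligible by the Chebyshev estimate of the first step). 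Hence $p_\infty(1-\vr_\infty)=0$ a.e., completing the identification. By a wide margin, the hardest ingredient is the construction of the Bresch--Jabin weight $w_n$: the non-renormalizability introduced by the growth term must be tamed without spoiling the decay of $\mathcal{I}_h$ in $h$.
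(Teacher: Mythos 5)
Your overall architecture --- a priori estimates, Bresch--Jabin compactness for $\vr_n$, limit passage, and the complementarity relation --- matches the paper, and the treatment of strong compactness and of $p_\infty(1-\vr_\infty)=0$ is essentially the paper's. The one place where you take a genuinely different (and harder) route is the uniform $L^2$ bound on the pressure, and there you miss the key structural observation that the paper exploits.

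You propose testing the momentum equation with a Bogovskii-type field $\bm\psi=\Grad(-\Delta)^{-1}(\chi p_n)$. This is the route one is forced into for the barotropic Navier--Stokes system \emph{without} growth term (Lions--Masmoudi), but it is delicate here: the term $\intTO{\vr_n\vu_n\cdot\pa_t\bm\psi}$ brings in $\pa_t p_n$, and the pressure equation \eqref{edp:p} has coefficients of order $\gamma_n$, so it is not at all clear the resulting estimate closes uniformly in $\gamma_n$; moreover $\bm\psi$ must be controlled on the whole space. The paper instead obtains the $L^2$ pressure bound directly from the \emph{continuity} equation, by renormalizing with a truncated version $\beta_K$ of $y\mapsto y^{\gamma_n}$. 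The point is that, because $G(p)=G_0(P_M-p)$, the renormalized equation reads
\[
\pa_t\beta_K(\vr_n)+\Div(\beta_K(\vr_n)\vu_n)+G_0\,\vr_n\beta_K'(\vr_n)\,p_n
=\vr_n\beta_K'(\vr_n)G_0P_M+\bigl(\beta_K(\vr_n)-\vr_n\beta_K'(\vr_n)\bigr)\Div\vu_n,
\]
and the term $G_0\,\vr_n\beta_K'(\vr_n)\,p_n\sim\gamma_n p_n^2$ on the left is a damping term that yields $\|p_n\|_{L^2((0,T)\times\RR^d)}\le C$ after a single Cauchy--Schwarz/Young absorption and an integration in time, with no elliptic machinery at all. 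The paper explicitly remarks that this is ``one of the main advantages of the growth term'' and that without it one would need the Bogovskii argument. So your route, while plausible in spirit, misses the observation that the growth term hands you the pressure estimate for free and avoids the uniformity-in-$\gamma_n$ issues your test function would raise.

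Two smaller points. In the Bresch--Jabin step, the weight used for the $\gamma_n\to\infty$ limit is transported along the \emph{actual} velocity $\vu_n$ (the regularized transport with $\eps\Delta w$ only appears later, for the $\eps\to0$ construction of solutions); more importantly, the weight is \emph{not} what tames the growth term. The growth contribution
$(\vr_n(x)G(p_n(x))-\vr_n(y)G(p_n(y)))(\vr_n(x)-\vr_n(y))$
splits into a nonpositive part $-G_0(\vr_n^{\gamma_n+1}(x)-\vr_n^{\gamma_n+1}(y))(\vr_n(x)-\vr_n(y))\le0$ (monotonicity of $\vr\mapsto\vr^{\gamma_n+1}$) and a benign part $G_0P_M(\vr_n(x)-\vr_n(y))^2$ handled by Gronwall; the weight $w$ is there to absorb the divergence term via the maximal function, exactly as in Bresch--Jabin. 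Finally, in the complementarity argument, the parenthetical ``$\vr_n^{\gamma_n}$, which vanishes on $\{\vr_\infty<1\}$'' is close to assuming the conclusion; the clean statement you want is the uniform elementary bound $\sup_{0\le\vr\le1}\vr^{\gamma_n}(1-\vr)\le(\gamma_n+1)^{-1}\to0$ on $\{\vr_n\le1\}$, combined with $\|p_n\|_{L^2}\|(\vr_n-1)_+\|_{L^2}\to0$ on $\{\vr_n>1\}$ --- which is equivalent to the paper's $\vr^{\gamma_n+1}\ge\vr^{\gamma_n}-\delta$ device.
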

The existence of solutions to the primitive system \eqref{sys:main} is a combination of nowadays classical techniques and compactness argument used in the proof of Theorem \ref{th:main}, therefore it is postponed and only roughly discussed in the end of the paper in Section \ref{sec:existence}. Otherwise, the paper is organized as follows. In Section \ref{sec:estimates} we derive the a-priori estimates, i.e. the estimates that can be obtained for the weak solutions of system \eqref{sys:main} and are uniform with respect to parameter $\gamma$. Then, in Section \ref{sec:compactness} we present the main compactness argument implying the pointwise convergence of the sequence $\vr_n$. In Section \ref{sec:limit}, we show that the a-priori estimates and the compactness argument are sufficient to pass to the limit in \eqref{sys:main} to obtain \eqref{sys:main2} which finishes the proof of Theorem \ref{th:main}.

\section{A-priori estimates}\label{sec:estimates}
The estimates presented in this section are derived using the assumption that $(\vr_n,\vu_n)$ is sufficiently smooth solution of \eqref{sys:main}. This is not necessarily true for the weak solutions from Definition \eqref{weaksol}. However, the calculations can be made rigorous on certain level of approximation discussed in Section \ref{sec:existence}.
\subsection{The energy estimate}
Let us denote the energy and the energy dissipation \eqref{energy}, \eqref{dissip} corresponding to $\rho_n, \vu_n$, and $p(\rho_n)=\rho_n^\gamman$ by $\calE_n$, $\calJ_n$, respectively. The following a-priori estimates are then uniform with respect to $n$.

\begin{lemma}\label{lem:nrj}
Under assumptions \eqref{hypini} and \eqref{hypG}, let $T>0$ be fixed,
then we have the following a priori estimates, uniform in $n\in\NN$:
\begin{itemize}
\item[(i)] For all $t\in [0,T]$, $0\leq \rho_n(t)$ and
$\intRd{ \rho_n(t,x)} \leq e^{G_0P_M t} \intRd{ \rho^0(x)}.$
\item[(ii)] There exists a nonnegative constant $C$ (uniform in $n$) such that
for all $t\in [0,T]$
\eq{\label{est:energy}
\calE_n(t) + \int_0^t \calJ_n(s)\,{\rm{d}}s \leq (\calE_n(0) + Ct) e^{G_0P_M t},
}
with $\calE_n(t) $ and $\calJ_n(t) $ defined in \eqref{energy}, \eqref{dissip}.
\item[(iii)] For all $q\in (1,\gamma_n)$, the sequence $(\rho_n)_{n\in \NN}$ is uniformly bounded in $L^\infty(0,T;L^q(\RR^d))$.
\end{itemize}
\end{lemma}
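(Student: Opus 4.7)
The three estimates are derived in order, each feeding into the next. For (i), I integrate the continuity equation \eqref{eq:NS1} over $\RR^d$; the divergence of $\rho_n\vu_n$ vanishes upon integration by parts thanks to decay at infinity, and since $p_n\geq 0$ gives $G(p_n)\leq G_0 P_M$, we obtain
$\frac{d}{dt}\int_{\RR^d}\rho_n\,\dx \leq G_0 P_M \int_{\RR^d}\rho_n\,\dx$, and Gronwall yields the bound. Non-negativity of $\rho_n$ follows from the maximum principle for transport equations with bounded source at the approximation level of Section \ref{sec:existence}, and is preserved under weak convergence.

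For (ii), I follow the classical energy method: test the nonconservative momentum equation \eqref{eq:NS2nc} against $\vu_n$, rewrite the material-derivative term using \eqref{eq:NS1}, and combine with the internal-energy identity obtained by multiplying \eqref{eq:NS1} by $\frac{\gamma_n}{\gamma_n-1}\rho_n^{\gamma_n-1}$. After the $\int p_n \Div \vu_n$ terms cancel, one reaches
\begin{equation*}
\frac{d}{dt}\calE_n(t) + \calJ_n(t) = \frac{1}{2}\int_{\RR^d}\rho_n|\vu_n|^2 G(p_n)\,\dx + \frac{\gamma_n}{\gamma_n-1}\int_{\RR^d} p_n G(p_n)\,\dx.
\end{equation*}
The kinetic growth term is immediately absorbed into $G_0 P_M \calE_n$.

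The main obstacle is the second term: a crude estimate $G(p_n)\leq G_0 P_M$ together with the a priori control $\int p_n \leq (\gamma_n-1)\calE_n$ would produce a Gronwall coefficient of order $\gamma_n$ and destroy uniformity. The remedy is to exploit the sign structure $p\,G(p) = G_0\,p\,(P_M-p)\leq 0$ on $\{p\geq P_M\}$, which gives
\begin{equation*}
\int_{\RR^d} p_n G(p_n)\,\dx \leq G_0 P_M \int_{\{\rho_n\leq P_M^{1/\gamma_n}\}} \rho_n^{\gamma_n}\,\dx \leq G_0 P_M\, P_M^{(\gamma_n-1)/\gamma_n} \int_{\RR^d}\rho_n\,\dx,
\end{equation*}
since $\rho_n^{\gamma_n-1}\leq P_M^{(\gamma_n-1)/\gamma_n}$ on the indicated set. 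The factor $P_M^{(\gamma_n-1)/\gamma_n}$ is bounded by $\max(1,P_M)$ uniformly in $\gamma_n$, and (i) controls $\int\rho_n$ on $[0,T]$, so the whole term is $O(1)$. Combined with $\gamma_n/(\gamma_n-1)\leq 2$ for $\gamma_n\geq 2$, this yields $\frac{d}{dt}\calE_n + \calJ_n \leq G_0 P_M \calE_n + C$; multiplying by $e^{-G_0 P_M t}$, integrating, and then multiplying back gives \eqref{est:energy}.

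Finally, (iii) follows by interpolation: (ii) and the definition \eqref{energy} yield $\|\rho_n\|_{L^{\gamma_n}(\RR^d)}\leq ((\gamma_n-1)C_T)^{1/\gamma_n}$, which stays bounded as $\gamma_n\to\infty$; log-convex interpolation between the $L^1$ bound from (i) and this $L^{\gamma_n}$ bound delivers the stated uniform $L^q$ estimate for any fixed $q\in(1,\gamma_n)$.
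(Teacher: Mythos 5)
Your proof is correct and follows essentially the same route as the paper's: integrating the continuity equation and Gronwall for (i); the standard energy computation combined with exploiting $p_n G(p_n)\leq 0$ on $\{p_n\geq P_M\}$ and the bound $\rho_n^{\gamma_n-1}\leq P_M^{1-1/\gamma_n}$ on the complementary set to keep the Gronwall coefficient uniform in $\gamma_n$ for (ii); and $L^1$--$L^{\gamma_n}$ interpolation with $(\gamma_n\cdot\text{const})^{1/\gamma_n}\to 1$ for (iii). The only cosmetic differences are that the paper invokes the Stampacchia method where you cite the maximum principle at the approximation level, and you make explicit the bound $\gamma_n/(\gamma_n-1)\leq 2$ which the paper leaves implicit.
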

\begin{proof}
The proof of $(i)$ is standard. By Stampacchia method we show that the nonnegativity principle holds; since $\rho^0(x) \geq 0$, we deduce that $\rho_n(t,x) \geq 0$ for any time $t>0$. Thus $p_n(t,x)=\rho_n^{\gamma_n} (t,x)\geq 0$. Then by a simple integration of \eqref{eq:NS1}
$$
\frac{d}{dt} \intRd{\rho_n(t,x)} = \intRd{ \rho_n(t,x) G(p_n(t,x))}
\leq G_0P_M \intRd{ \rho_n(t,x)},
$$
where we use \eqref{hypG}. We then conclude by integration in time and the Gronwall inequality.

For part $(ii)$, we compute
\begin{align*}
\frac{d}{dt} \calE_n(t) & = \intRd{ \pa_t \rho_n\left(\frac 12 |\vu_n|^2 + \frac{\gamma_n}{\gamma_n-1} \rho_n^{\gamma_n-1}\right)} + \intRd{ \rho_n \pa_t \vu_n\cdot \vu_n}  \\
& = \intRdB{\frac 12 \rho_n |\vu_n|^2 G(p_n) + \frac{\gamma_n}{\gamma_n-1} \rho_n^{\gamma_n} G(p_n)} +
\intRd{ \rho_n \vu_n \cdot \Big(\frac 12 \nabla |\vu_n|^2 + \gamma_n \rho_n^{\gamma_n-2} \nabla \rho_n\Big)}  \\
& \quad + \intRd{(\mu \Delta \vu_n + \xi \nabla \dv\vu_n - \nabla p_n - \rho_n \vu_n \cdot \nabla \vu_n)\cdot \vu_n },
\end{align*}
where we used \eqref{eq:NS1} for the first two terms and {\eqref{eq:NS2nc}}
for the last term. Noticing that 
$\nabla p_n = \gamma_n {\rho_n^{\gamma_n-1}} \nabla \rho_n$, and 
$\vu_n\cdot \nabla |\vu_n|^2 = 2 (\vu_n\cdot\nabla \vu_n)\cdot \vu_n$,
we may cancel the second 
integral with the last two terms of the last integral. Then, 
integrating by parts, we deduce
\begin{equation}\label{estim1E}
\frac{d}{dt} \calE_n(t) + \calJ_n(t) = \int_{\RR^d} \left(\frac 12 \rho_n |\vu_n|^2 G(p_n) + \frac{\gamma_n}{\gamma_n-1} \rho_n^{\gamma_n} G(p_n)\right)\,dx.
\end{equation}
Since $p_n\geq 0$, and $G$ satisfies \eqref{hypG},
we have $G(p_n)\leq G_0P_M$, then
$$
\frac{d}{dt} \calE_n(t) + \calJ_n(t) \leq G_0P_M \calE_n(t) +
\intO{\frac{\gamma_n}{\gamma_n-1} \rho_n^{\gamma_n} G(p_n)}.
$$
Moreover, still using assumption \eqref{hypG}, we have that $G(p_n)\leq 0$
if $p_n\geq P_M \iff  \rho_n\geq P_M^{1/\gamma_n}$. Then
\begin{align*}
\intO{\rho_n^{\gamma_n} G(p_n)} & \leq
\intO{\mathbf{1}_{\{\rho_n \leq P_M^{1/\gamma_n}\}} \rho_n^{\gamma_n} G(p_n)}
\leq G_0P_M \intO{ \mathbf{1}_{\{\rho_n \leq P_M^{1/\gamma_n}\}} P_M^{1-1/\gamma_n} \rho_n } \\
& \leq G_0 P_M^{2-1/\gamma_n} \|\rho_n(t)\|_{L^1(\RR^d)}.
\end{align*}
Thus, using the bound on the $L^1$ norm of $\rho_n$ from part $(i)$, we deduce that there exists a nonnegative constant $C$ such that uniformly in $n$ we have
$$
\frac{d}{dt} \calE_n(t) + \calJ_n(t) \leq G_0P_M \calE_n(t) + C e^{G_0P_Mt},
$$
and we conclude using the Gronwall lemma.

$(iii)$ As a consequence of the point (ii) above, we deduce 
$$
\intO{\rho_n^{\gamma_n}(t,x)} \leq \gamma_n (\calE_n(0) + C t) e^{G_0P_Mt}.
$$
Then,
$$
\sup_{t\in(0,T)}\|\rho_n(t)\|_{L^{\gamma_n}(\RR^d)} \leq \left(\gamma_n (\calE_n(0) + C T) e^{G_0P_MT}\right)^{1/\gamma_n} \to 1, \qquad\mbox{ as } n\to + \infty.
$$
By interpolation, for any $q\in (1,\gamma_n)$, we have
\begin{equation}\label{estimrhoq}
\|\rho_n(t)\|_{L^q(\RR^d)} \leq \|\rho_n(t)\|_{L^1(\RR^d)}^{\theta_n} \|\rho_n(t)\|_{L^{\gamma_n}(\RR^d)}^{1-\theta_n}
\leq \|\rho^0\|_{L^1(\RR^d)}^{\theta_n} e^{\theta_n G_0P_Mt}
\left(\gamma_n (\calE_n(0) + C t) e^{G_0P_Mt}\right)^{\frac{1-\theta_n}{\gamma_n}},
\end{equation}
with $\frac 1q = \theta_n + \frac{1-\theta_n}{\gamma_n}$, then when $n\to+\infty$, we have $\theta_n\to \frac 1q$. We deduce that for $N$ large enough, the sequence $\{\rho_n\}_{n\geq N}$ is uniformly bounded in $L^\infty(0,T;L^q(\RR^d))$.
\end{proof}

Lemma \ref{lem:nrj} implies that we may extract a subsequence (still labelled by $n$), such that ${\rho_n}\rightharpoonup \rho_\infty$ weakly as $n\to +\infty$.
Then, by lower semi-continuity of norm, passing into the limit $n\to +\infty$ in \eqref{estimrhoq}, we have
$$
\|\rho_\infty\|_{L^\infty(0,T;L^q(\RR^d))} \leq \underset{n\to+\infty}{\lim\inf} \|\rho_n\|_{L^\infty(0,T;L^q(\RR^d))} \leq \|\rho^0\|_{L^1(\RR^d)}^{1/q} e^{G_0P_MT/q}.
$$
Since this latter estimate is true for any $q\geq 1$, we may let $q$ going to $+\infty$ to find
$$
\|\rho_\infty\|_{L^\infty((0,T)\times\RR^d)} \leq \underset{q\to+\infty}{\lim\inf} \|\rho_\infty\|_{L^\infty(0,T;L^q(\RR^d))} \leq 1.
$$
In addition, we can prove
\begin{lemma}\label{lem:rhoinf1}
Under the same assumptions as in Lemma \ref{lem:nrj}, we have $(\rho_n-1)_+ \to 0$ as $n\to +\infty$.
\end{lemma}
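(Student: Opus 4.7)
The plan is to combine two ingredients from Lemma \ref{lem:nrj}: the uniform $L^1$ bound on $\rho_n$ from part (i), and the crucial estimate $\int_{\RR^d}\rho_n^{\gamma_n}(t,x)\,dx \leq C\gamma_n$ derived from part (ii). The first controls the measure of the level set $\{\rho_n>1\}$, while the second, through Chebyshev's inequality, forces the set $\{\rho_n>1+\delta\}$ to be super-exponentially small in $\gamma_n$. A simple split of $(\rho_n-1)_+$ at the threshold $1+\delta$ then yields convergence to zero in $L^\infty(0,T;L^p(\RR^d))$ for every $p\in[1,\infty)$.

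More concretely, fix $p\in[1,\infty)$ and $\delta>0$. For a.a. $t\in(0,T)$, I would write
\eqh{
\intO{(\rho_n(t,x)-1)_+^p} = \int_{\{1<\rho_n(t,\cdot)\leq 1+\delta\}} (\rho_n-1)^p\,\dx + \int_{\{\rho_n(t,\cdot)>1+\delta\}}(\rho_n-1)^p\,\dx.
}
The first integral is bounded by $\delta^p\,|\{\rho_n(t,\cdot)>1\}|\leq \delta^p\,\|\rho_n(t)\|_{L^1(\RR^d)}\leq C\delta^p$ uniformly in $n$ and $t$. For the second, Chebyshev's inequality together with the bound from Lemma \ref{lem:nrj}(ii) gives
\eqh{
|\{\rho_n(t,\cdot)>1+\delta\}|\leq \frac{\intO{\rho_n^{\gamma_n}(t,x)}}{(1+\delta)^{\gamma_n}}\leq \frac{C\gamma_n}{(1+\delta)^{\gamma_n}},
}
which decays super-exponentially to zero. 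An application of Hölder's inequality with exponent $\gamma_n/p$ (valid for $n$ large enough) and the fact that $\|\rho_n(t)\|_{L^{\gamma_n}(\RR^d)}\to 1$ (already established in the proof of Lemma \ref{lem:nrj}(iii)) then shows that the second integral tends to $0$ as $n\to\infty$, uniformly in $t\in[0,T]$.

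Combining the two bounds, $\limsup_{n\to\infty}\|(\rho_n-1)_+\|_{L^\infty(0,T;L^p(\RR^d))}^p\leq C\delta^p$, and since $\delta>0$ was arbitrary, the left-hand side is zero. Taking $p=1$ (or any $p\in[1,\infty)$) then gives the claimed convergence $(\rho_n-1)_+\to 0$. I do not expect a serious obstacle here; the only mildly delicate point is that the $L^{\gamma_n}$ norm is with respect to an index that itself diverges, but this is handled exactly as in part (iii) of Lemma \ref{lem:nrj}, where $(C\gamma_n)^{1/\gamma_n}\to 1$ as $n\to\infty$. The argument provides more than the stated conclusion: it gives strong convergence of $(\rho_n-1)_+$ to $0$ in $L^\infty(0,T;L^p(\RR^d))$ for every $p\in[1,\infty)$, which will be useful later when identifying the constraint $p_\infty(1-\rho_\infty)=0$ in the limiting system.
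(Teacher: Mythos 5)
Your proof is correct, but it takes a genuinely different route from the paper's. The paper introduces $\phi_n=(\rho_n-1)_+$ and directly invokes an elementary pointwise inequality from Lions--Masmoudi: for any $q>1$ there is $a_q>0$ with $(1+x)^k\geq 1+a_q k^q x^q$ for $x\geq 0$ and $k$ large. Feeding this into the bound $\int(1+\phi_n)^{\gamma_n}\mathbf{1}_{\{\phi_n>0\}}\leq\int\rho_n^{\gamma_n}\leq C\gamma_n$ immediately gives $\int\phi_n^q\leq C/(a_q\gamma_n^{q-1})\to 0$. That is a one-step argument with no auxiliary parameter. You instead split the superlevel set at threshold $1+\delta$, control the measure of $\{\rho_n>1\}$ by the uniform $L^1$ bound, control the tail $\{\rho_n>1+\delta\}$ by Chebyshev (super-exponential decay in $\gamma_n$), and then finish with H\"older in $L^{\gamma_n/p}$ and the fact that $\|\rho_n(t)\|_{L^{\gamma_n}}\to 1$, sending $\delta\to 0$ at the end. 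Your approach uses only Chebyshev and H\"older, avoiding the nontrivial elementary inequality, at the cost of introducing and removing the parameter $\delta$; the paper's is shorter and matches the reference [LM]. Both yield the same conclusion, locally uniform in time, so either suffices for the subsequent identification of the constraint $p_\infty(1-\rho_\infty)=0$.
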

\begin{proof}
Let us introduce $\phi_n = (\rho_n-1)_+$. From the energy estimate \eqref{est:energy}, we deduce 
$$
\intO{(1+\phi_n)^{\gamma_n} \mathbf{1}_{\{\phi_n>0\}} }
\leq \intO{\rho_n^{\gamma_n}} \leq
(\calE(0)+Ct) e^{G_0P_Mt} \gamman.
$$
It has been proved in \cite[p.24]{LM} that, for any $q>1$ and any $x\geq 0$, there exists $a_q>0$
such that $(1+x)^k \geq 1+a_q k^q x^q$, for $k$ large enough. 
Thus, for sufficiently large $n$, we have
$$
\intO{\phi_n^q } \leq \frac{(\calE_n(0)+Ct) e^{G_0P_Mt}}{a_q \gamma_n^{q-1}} \underset{n\to +\infty}{\longrightarrow} 0.
$$
Therefore $\phi_n \to 0$ strongly in $L^q((0,T)\times\RR^d)$ for any $q\geq 1$ .
\end{proof}

\subsection{The estimate of the pressure}
Note that the energy estimate from the previous section does not provide any estimate of the pressure independent of $n$, only the estimate of the density. In the following lemma, we state an $L^2$ estimate on the pressure.
\begin{lemma}\label{lem:pL2}
Under the same assumptions as in Lemma \ref{lem:nrj}, the sequence $\{p_n\}_{n=1}^\infty$ is uniformly bounded in $L^2((0,T)\times\RR^d)$.
\end{lemma}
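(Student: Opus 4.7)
My approach is the classical duality trick of Lions and Masmoudi. The goal is to exhibit $\|p_n\|^2_{L^2}$ on the left-hand side of the weak momentum formulation, then bound everything else by the energy estimate plus a small multiple of the target. Fix a smooth spatial cut-off $\chi_R\in C_c^\infty(\RR^d)$, $\chi_R\equiv 1$ on $B_R$, and set
\[
\bm{\psi}_n(t,x) \;:=\; \Grad(-\lap)^{-1}\bigl(p_n(t,\cdot)\chi_R\bigr)(x),
\]
so that $\Div \bm{\psi}_n = -p_n\chi_R$ and, by Calder\'on--Zygmund, $\|\Grad\bm{\psi}_n(t,\cdot)\|_{L^2(\RR^d)} \lesssim \|p_n(t,\cdot)\chi_R\|_{L^2(\RR^d)}$, uniformly in $n$ and $R$; standard Sobolev embedding then also controls $\bm{\psi}_n$ itself in $L^r$ for suitable $r$.

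Inserting $\bm{\psi}_n$ into formulation (ii) of Definition~\ref{weaksol}, the pressure term becomes $-\intTO{p_n^2\chi_R}$, which, moved to the other side, is the quantity to be estimated. What remains to control is the sum of the boundary-in-time traces at $t=0$ and $t=T$, the viscous contribution $\intTOB{\mu\Grad\vu_n:\Grad\bm{\psi}_n+\xi\Div\vu_n\,\Div\bm{\psi}_n}$, the convective term $\intTO{\vr_n\vu_n\otimes\vu_n:\Grad\bm{\psi}_n}$, the growth term $\intTO{\vr_n G(p_n)\vu_n\cdot\bm{\psi}_n}$, and, crucially, the time derivative $\intTO{\vr_n\vu_n\cdot\pa_t\bm{\psi}_n}$. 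The viscous piece is handled by Young's inequality, producing $\eps\intTO{p_n^2\chi_R}+C_\eps\intTO{|\Grad\vu_n|^2}$ which is closed by \eqref{est:energy}. The convective and growth terms are controlled by H\"older, the Sobolev embedding of $\vu_n\in L^2(0,T;H^1(\RR^d))$, and the uniform $L^\infty_tL^q_x$ bounds on $\vr_n$ from Lemma~\ref{lem:nrj}(iii), all paired against $\Grad\bm{\psi}_n\in L^2$ or $\bm{\psi}_n\in L^r$. The endpoint traces are bounded by the energy estimate at $t=0,T$.

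The main obstacle is the time derivative, since $\pa_t p_n$ has a priori no uniform control. As $\pa_t$ commutes with $\Grad(-\lap)^{-1}$ and with the spatial cut-off, one needs an expression for $\pa_t p_n$, and I would obtain it by applying the renormalized continuity equation \eqref{eq:NS1} to $\vr\mapsto\vr^{\gamman}$:
\[
\pa_t p_n + \Div(p_n\vu_n) + (\gamman-1)\,p_n\,\Div\vu_n \;=\; \gamman\,p_n\,G(p_n).
\]
Substituting this into $\intTO{\vr_n\vu_n\cdot\pa_t\bm{\psi}_n}$ and integrating by parts in space to transfer the $\Div$ onto $\vr_n\vu_n$ (turning $\Grad(-\lap)^{-1}\Div$ into a zero-order Riesz operator acting on $\vr_n\vu_n$), every resulting piece is estimated by H\"older using the $L^2$ bound on $\Grad\vu_n$, the $L^\infty_tL^q_x$ estimates on $\vr_n$, and the uniform integrability of the growth source $\gamman\,p_n\,G(p_n)/(\gamman-1)$ recorded in the energy identity \eqref{estim1E}; the factors of $\gamman$ appearing are balanced by the smallness of $p_n$ itself, again via \eqref{est:energy}.

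Collecting all contributions yields an inequality of the form $\intTO{p_n^2\chi_R}\leq C + \eps\,\intTO{p_n^2\chi_R}$ with $C$ independent of $n$ and $R$; choosing $\eps$ small enough to absorb the last term, and then letting $R\to\infty$ by monotone convergence, finishes the proof.
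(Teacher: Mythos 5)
Your proposal is a genuinely different route from the paper's, and unfortunately it contains a gap that is hard to close. The paper does \emph{not} test the momentum equation with a Bogovskii-type test function. Instead it renormalizes the continuity equation with the truncated function $\beta_K(y)=y^{\gamma_n}$ (for $y<K$), observing that the specific form $G(p)=G_0(P_M-p)$ produces the favourable term $-G_0\,\rho_n\beta_K'(\rho_n)\,p_n$ on the left-hand side. Integrating in space and time, absorbing a piece by Young's inequality using $\gamma_n\,y\beta_K'(y)\,y^{\gamma_n}\geq(y\beta_K'(y))^2$, dividing by $\gamma_n$, and sending $K\to\infty$ yields $\int\!\!\int p_n^2\leq C$ directly. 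No duality, no test function, no $(-\Delta)^{-1}$. Indeed, in the remark following the lemma the authors point out explicitly that the growth term is precisely what allows them to bypass the Bogovskii argument; the latter is what one is forced into when $G\equiv0$, as in \cite{LM} or \cite{PZ}.

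The concrete gap in your sketch is the treatment of the time-derivative term. After substituting the pressure equation
\[
\pa_t p_n = -\Div(p_n\vu_n) - (\gamma_n-1)\,p_n\,\Div\vu_n + \gamma_n\,p_n\,G(p_n),
\]
the last two contributions carry an explicit factor $\gamma_n$ (and the last one even contains $\gamma_n p_n^2$, which is not an $L^1$ quantity you control at this stage). You assert that these factors of $\gamma_n$ ``are balanced by the smallness of $p_n$ itself, again via \eqref{est:energy}.'' But the energy estimate gives exactly the opposite: $\frac{1}{\gamma_n-1}\int \rho_n^{\gamma_n}\,\dx\leq C$, so $\|p_n\|_{L^\infty_t L^1_x}\lesssim\gamma_n$, i.e.\ $p_n$ \emph{grows} linearly in $\gamma_n$ in $L^1$ rather than becoming small. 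Consequently a term like $\gamma_n\,p_n\,\Div\vu_n$, paired with $\Grad(-\lap)^{-1}$ of anything controlled by the energy, produces at best a bound of the form $\int\!\!\int p_n^2\chi_R\leq C\gamma_n$, which is not uniform. Without a cancellation that your sketch does not identify, this approach does not close. If you wish to keep the duality route, you would need to exhibit such a cancellation (e.g.\ exploiting the negative sign of $-\gamma_n G_0 p_n^2$ in $\pa_t p_n$ together with a sign-definite pairing), but the paper's much cheaper argument avoids the difficulty altogether by never introducing the $(-\Delta)^{-1}$ operator and never touching the momentum equation.
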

\begin{proof}
From the renormalization property \cite{DL} for equation \eqref{eq:NS1},
we have that for any $C^1$ function $\beta$: $\RR\to\RR$ such that
$|\beta(y)|\leq C(1+y)$,
\begin{equation}\label{eq:betarn}
\pa_t \beta(\rho_n) + \dv(\beta(\rho_n) \vu_n) = (\beta(\rho_n)-\rho_n \beta'(\rho_n)) \dv \vu_n
+ \rho_n \beta'(\rho_n) G(p_n).
\end{equation}
Let $K>0$ be a nonnegative constant. We define, for $\gamma_n>1$, $\beta_K$ the function
$$
\beta_K(y) = \left\{ \begin{array}{ll}
0, \qquad  & \mbox{ if } y\leq 0,  \\
y^{\gamma_n}, \qquad  & \mbox{ if } y\in (0,K),  \\
\gamma_n K^{\gamma_n-1} y + K^{\gamma_n}(1-\gamma_n), \qquad  & \mbox{ if } y \geq K.  \\
\end{array}\right.
$$
For all $y\geq 0$ and $\gamma_n>1$, we have
\begin{equation}\label{estbetaK}
0 \leq \beta_K(y) \leq y \beta_K'(y) \quad \text{and}\quad
\gamma_n y\beta_K'(y) y^{\gamma_n} \geq \big( y \beta_K'(y)\big)^2.
\end{equation}
Using \eqref{eq:betarn} with $\beta=\beta_K$ and inserting the assumption \eqref{hypG} on the growth function $G$, we deduce
\begin{align*}
\pa_t \beta_K(\rho_n) + \dv(\beta_K(\rho_n) \vu_n) + G_0 \rho_n \beta_K'(\rho_n) p_n  & = 
\rho_n \beta_K'(\rho_n) G_0 P_M + (\beta_K(\rho_n)-\rho_n\beta_K'(\rho_n)) \dv \vu_n  \\
& \leq \rho_n \beta_K'(\rho_n) G_0 P_M + 2\rho_n\beta_K'(\rho_n) |\dv \vu_n|,
\end{align*}
where we used \eqref{estbetaK} to get the last inequality.
On the set $\{\rho_n\leq 1\}$, we clearly have $\rho_n \beta_K'(\rho_n)\leq \gamma_n \rho_n$. Then,
\begin{align*}
\pa_t \beta_K(\rho_n) + \dv(\beta_K(\rho_n) \vu_n) + G_0 \rho_n \beta_K'(\rho_n) p_n  \leq &
\ \gamma_n \rho_n G_0 P_M \mathbf{1}_{\{\rho_n\leq 1\}}  \\
& \ + \rho_n\beta_K'(\rho_n) \big(2|\dv \vu_n|+G_0 P_M \rho_n \mathbf{1}_{\{\rho_n > 1\}}\big).
\end{align*}
Integrating and using the Cauchy-Schwarz and the Young inequalities, we deduce that for any $\epsilon>0$, there exists a nonnegative constant $C_\epsilon$ such that
\eqh{
&\frac{d}{dt}\intO{\beta_K(\rho_n)} + G_0 \intO{ \rho_n \beta_K'(\rho_n) p_n} \\
&\quad\leq \gamma_n G_0P_M \intO{ \rho_n} 
+ \gamma_n \epsilon \intO{ \frac{\big(\rho_n\beta_K'(\rho_n)\big)^2}{{\gamma_n}^2} } 
+ \gamma_n C_\epsilon \intO{(\dv \vu_n^2 + \rho_n^2)}.
}
We may fix $\epsilon>0$ such that, from \eqref{estbetaK},
$$
\gamma_n \epsilon \intO{ \frac{\big(\rho_n\beta_K'(\rho_n)\big)^2}{{\gamma_n}^2} } \leq 
\frac{G_0}{2} \intO{ \rho_n \beta_K'(\rho_n) p_n}.
$$
Integrating in time, we obtain that there exists a nonnegative constant $C$ such that
\begin{align*}
&\|\beta(\rho_n)(T)\|_{L^1(\RR^d)} +
\frac{G_0}{2} \intTO{ \rho_n \beta_K'(\rho_n) p_n}  \\
&\leq  \|\beta(\rho_n)(0)\|_{L^1(\RR^d)} + 
C \gamma_n \left(\|\rho_n\|_{L^1((0,T)\times\RR^d)} + \|\rho_n\|^2_{L^2((0,T)\times\RR^d)}
+ \|\dv \vu_n\|^2_{L^2((0,T)\times\RR^d)}\right).
\end{align*}
Using estimates in Lemma \ref{lem:nrj}, we deduce that there exists an uniform (with respect to $n$ and $K$) constant $C>0$ such that
$$
\frac{1}{\gamma_n}\intTO{ \rho_n\beta_K'(\rho_n) p_n}
\leq C.
$$
Therefore, for all $K\geq 0$, we deduce that
$$
\intTO{ p_n^2 \mathbf{1}_{\{\rho_n\leq K\}} } \leq C.
$$
We may now let $K$ go to $+\infty$ and, by the monotone convergence theorem, we conclude the proof. 
\end{proof}

\begin{remark}
As a consequence of Lemma \ref{lem:pL2}, we deduce that $\rho_n$ is bounded in 
$L^{2\gamma_n}([0,T]\times\RR^d)$. Then we may apply Lemma 6.9 from \cite{NS} 
and deduce that \eqref{eq:betarn} holds with  $\beta(y)=y^\gamma$. We therefore obtain the evolution equation for the  pressure $p_n=\vr_n^\gamman$,
\begin{equation}\label{edp:p}
\pa_t p_n + \vu_n\cdot\nabla p_n + \gamma_n p_n \dv \vu_n = \gamma_n p_n G(p_n).
\end{equation}
\end{remark}

\begin{remark}
The fact that we can derive the uniform estimates for the pressure is one of the main advantages of the growth term in the continuity equation \eqref{eq:NS1}. Not having it would require more laborious estimates with the application of Bogovski type of operator, see for example \cite{LM}, \cite{PZ}.
\end{remark}

\subsection{The estimate of the nonlinear terms in the momentum equation}
Before letting $n\to \infty$, we need to provide the uniform estimates of the rest of nonlinear terms from the momentum equation \eqref{eq:NS2}. Applying the operator $(-\Delta)^{-1} \dv$ to both sides of {\eqref{eq:NS2nc}}, we deduce
\begin{equation}\label{link:up}
(\mu + \xi) \dv \vu_n = p_n -(- \Delta)^{-1}\big(\dv(\rho_n\pa_t \vu_n + \rho_n \vu_n\cdot\nabla \vu_n))\big) = p_n + \calD(\rho_n \vu_n),
\end{equation}
where we use the notation for the total derivative 
$$\calD(\rho_n \vu_n) =-(- \Delta)^{-1}\big(\dv(\rho_n \pa_t \vu_n + \rho_n \vu_n\cdot \nabla \vu_n))\big).$$
Using the $L^2$ estimate of the pressure and  the $L^2$ estimate of $\dv \vu_n$ following from the energy estimate we deduce the following fact.
\begin{corollary}\label{cor:D}
Under the assumptions of Lemma \ref{lem:pL2}, the sequence $\{\calD(\rho_n \vu_n)\}_{n=1}^\infty$ is uniformly bounded in $L^2([0,T]\times\RR^d)$.
\end{corollary}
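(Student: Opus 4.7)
The plan is to read off the bound directly from the identity \eqref{link:up}, which rewrites $\calD(\rho_n\vu_n)$ as a linear combination of two quantities that are already controlled in $L^2$ by the preceding lemmas.

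First I would isolate
\eq{
\calD(\rho_n \vu_n) = (\mu+\xi)\,\dv \vu_n - p_n
}
from \eqref{link:up}. Then, by the triangle inequality, it suffices to bound $\dv \vu_n$ and $p_n$ separately in $L^2((0,T)\times\RR^d)$ uniformly in $n$.

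For the pressure term, Lemma \ref{lem:pL2} gives exactly the uniform $L^2$ bound on $p_n$ that we need. For the divergence term, I would use the energy estimate from Lemma \ref{lem:nrj}(ii): since $\mu>0$, the pointwise inequality $|\dv \vu_n|^2 \leq d\,|\nabla \vu_n|^2$ combined with the bound on $\int_0^T \calJ_n(s)\,\mathrm{d}s$ yields a uniform $L^2((0,T)\times\RR^d)$ estimate for $\dv \vu_n$ (if $\xi>0$ one can equally well use the $\xi(\dv \vu_n)^2$ term directly). Putting these two ingredients together gives the desired uniform bound
\eq{
\|\calD(\rho_n \vu_n)\|_{L^2((0,T)\times\RR^d)} \leq (\mu+\xi)\|\dv \vu_n\|_{L^2((0,T)\times\RR^d)} + \|p_n\|_{L^2((0,T)\times\RR^d)} \leq C,
}
with $C$ independent of $n$.

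Since every step is a direct consequence of previously established estimates, I do not foresee any genuine obstacle here; the only mildly delicate point is to be explicit about the fact that the $L^2$ control on $\dv \vu_n$ follows from the $H^1$ bound on $\vu_n$ provided by the energy dissipation, rather than requiring any further argument involving the momentum equation.
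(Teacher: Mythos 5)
Your proof is correct and follows exactly the route the paper intends: read off $\calD(\rho_n\vu_n)=(\mu+\xi)\dv\vu_n-p_n$ from \eqref{link:up} and bound the two terms via Lemma \ref{lem:pL2} and the energy dissipation in Lemma \ref{lem:nrj}(ii), respectively. The paper states this corollary without further proof precisely because it is this immediate consequence, so there is nothing to add.
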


\section{Compactness}\label{sec:compactness}
The purpose of this section is to establish the compactness of the density sequence $\{\rho_n\}_{n=1}^{\infty}$. To do it, 
we follow the strategy proposed by {\sc Bresch \& Jabin} \cite{BJ}  (see also \cite{BJ_short}) in the context of compressible Navier-Stokes equations with the non-monotone pressure law. We adapt their approach to  whole space $\RR^d$ case, with 
a nonzero growth term in the right hand side of the continuity equation, and consequently, the conservative form of the momentum equation. Application of nowadays classical approach developed by {\sc Lions} \cite{Lions2} and {\sc Feireisl} \cite{EF2001} fails precisely due to the presence of this additional term.

The main result of this section is the following 
\begin{proposition}\label{prop:rho}
Let $T>0$.
Assume that $\{(\rho_n,\vu_n)\}_{n=1}^{\infty}$ satisfies \eqref{sys:main}, \eqref{eq:p} with assumptions  \eqref{hypG}, \eqref{hypini}, such that the estimates from Lemma \ref{lem:nrj} and in Lemma \ref{lem:pL2} hold.

Then the sequence $\{\rho_n\}_{n=1}^\infty$ is compact in $L^2_{loc}([0,T]\times\RR^d)$.
\end{proposition}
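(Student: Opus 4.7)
The plan is to follow the Bresch--Jabin strategy \cite{BJ}: build a non-local functional based on a singular logarithmic kernel and a flow-dependent weight that absorbs the bad non-local terms, then extract $L^2$ equicontinuity of translations of $\rho_n$. Concretely, fix a radial kernel $K_h(z)=\chi(|z|)/(|z|+h)^d$ on $\RR^d$, where $\chi$ is a smooth compactly supported cutoff equal to one near the origin, so that $\|K_h\|_{L^1}\sim \log(1/h)\to\infty$ as $h\to 0$. For each $n$, introduce a weight $w_n\in[0,1]$ defined as the solution of the transport equation
\begin{equation*}
\pa_t w_n + \vu_n(t,x)\cdot\nabla_x w_n + \vu_n(t,y)\cdot\nabla_y w_n = -\lambda\,\bigl(\Psi_n(t,x)+\Psi_n(t,y)\bigr)\,w_n,\qquad w_n(0,\cdot,\cdot)\equiv 1,
\end{equation*}
where $\Psi_n:=M|\nabla\vu_n|+M|\calD(\rho_n\vu_n)|+p_n+1$, $M$ denotes the Hardy--Littlewood maximal function, and $\lambda>0$ will be chosen large. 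The central object is
\begin{equation*}
\mathcal{R}_{n,h}(t)=\iintO{K_h(x-y)\,w_n(t,x,y)\,|\rho_n(t,x)-\rho_n(t,y)|^2}.
\end{equation*}

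Differentiating $\mathcal{R}_{n,h}$ in time and using the renormalized continuity equation \eqref{eq:betarn} at both $x$ and $y$, $\pa_t\mathcal{R}_{n,h}$ splits into three groups. A convective piece $\int\nabla K_h(x-y)\cdot(\vu_n(x)-\vu_n(y))\,w_n\,|\rho_n(x)-\rho_n(y)|^2$, controlled by the pointwise bound $|\vu_n(x)-\vu_n(y)|\leq C|x-y|(M|\nabla\vu_n|(x)+M|\nabla\vu_n|(y))$ combined with $|x-y||\nabla K_h(x-y)|\lesssim K_h(x-y)$, is absorbed by the damping in $w_n$ once $\lambda$ is large enough. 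A growth contribution coming from $\rho_n G(p_n)$ is bounded by $C\mathcal{R}_{n,h}$ because $|G|\leq G_0P_M$. The genuine divergence contribution takes the form
\begin{equation*}
-\iintO{K_h(x-y)\,w_n\,(\rho_n(x)-\rho_n(y))\bigl(\rho_n(x)\dv\vu_n(x)-\rho_n(y)\dv\vu_n(y)\bigr)},
\end{equation*}
and is the heart of the matter.

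Substituting the identity $(\mu+\xi)\dv\vu_n=p_n+\calD(\rho_n\vu_n)$ from \eqref{link:up} splits this last term into a pressure part and a non-local part. The pressure part reads
\begin{equation*}
-\frac{1}{\mu+\xi}\iintO{K_h(x-y)\,w_n\,(\rho_n(x)-\rho_n(y))\bigl(\rho_n(x)p_n(x)-\rho_n(y)p_n(y)\bigr)},
\end{equation*}
which is \emph{non-positive} because $\rho\mapsto \rho\cdot\rho^{\gamma_n}=\rho^{\gamma_n+1}$ is monotone increasing, so this term can simply be dropped. The remaining $\calD$-contribution is bounded pointwise by $|\rho_n(x)-\rho_n(y)|(\rho_n(x)+\rho_n(y))\bigl(M|\calD(\rho_n\vu_n)|(x)+M|\calD(\rho_n\vu_n)|(y)\bigr)$ and absorbed by the damping $\Psi_n$ built into $w_n$. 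A Gronwall argument, together with Lemmas~\ref{lem:nrj}--\ref{lem:pL2} and Corollary~\ref{cor:D} which ensure $\Psi_n\in L^1(0,T;L^1_{loc})$ uniformly in $n$, then yields $\sup_{[0,T]}\mathcal{R}_{n,h}\leq C$ independently of $n$ and $h$.

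Since $\|K_h\|_{L^1}\sim\log(1/h)\to\infty$ while $\mathcal{R}_{n,h}$ stays bounded, and since $\log w_n$ satisfies a transport equation with an $L^1_tL^1_{loc}$ right-hand side (so $w_n$ is close to $1$ outside a set of arbitrarily small measure), a now-standard argument gives
\begin{equation*}
\frac{1}{\log(1/h)}\iintTO{K_h(x-y)\,|\rho_n(t,x)-\rho_n(t,y)|^2}\longrightarrow 0\quad\text{uniformly in }n.
\end{equation*}
This is precisely a quantitative equicontinuity of translations of $\rho_n$ in $L^2_{loc}$; combined with the $L^\infty$ bound derived from Lemma~\ref{lem:rhoinf1} and the uniform $L^\infty(0,T;L^q)$ estimate of Lemma~\ref{lem:nrj}(iii), the Riesz--Fr\'echet--Kolmogorov theorem yields compactness of $\{\rho_n\}$ in $L^2_{loc}([0,T]\times\RR^d)$. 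The hard part of the whole scheme is the design of $w_n$: the Lions--Feireisl effective-viscous-flux cancellation is unavailable because the source $\rho_n G(p_n)$ destroys the pure transport structure of the continuity equation, so every non-local term arising from the substitution in $\dv\vu_n$ must be tracked directly. The monotonicity of $p_n$ in $\rho_n$ together with the logarithmic kernel $K_h$ and the damping in $w_n$ is exactly what replaces that cancellation in the Bresch--Jabin framework.
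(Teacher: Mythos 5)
Your overall strategy is the correct one and closely follows the paper (same non-local functional with kernel $K_h$, weight absorbing the convective term via the maximal-function inequality, substitution of the viscous-flux identity, good sign of the pressure contribution). However, there is a genuine gap in your treatment of the non-local term $\calD(\rho_n\vu_n)$, and that term is exactly where the whole difficulty of the proof is concentrated.

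The absorption mechanism you use for the convective piece rests on the pointwise inequality $|\vu_n(x)-\vu_n(y)|\leq C|x-y|(M|\nabla\vu_n|(x)+M|\nabla\vu_n|(y))$, where the crucial factor $|x-y|$ combines with $|x-y||\nabla K_h(x-y)|\lesssim K_h(x-y)$ so that the result is again a term of the form $K_h\,(\rho_n(x)-\rho_n(y))^2(M|\nabla\vu_n|(x)+M|\nabla\vu_n|(y))\,w_n$, which the damping $-\lambda(\Psi_n(x)+\Psi_n(y))w_n$ can swallow. No analogue of this is available for $\calD(\rho_n\vu_n)$: Corollary~\ref{cor:D} gives only a uniform $L^2_{t,x}$ bound, there is no uniform control of $\nabla\calD(\rho_n\vu_n)$ in any $L^p$, and the divergence contribution already comes with the kernel $K_h$ rather than $\nabla K_h$, so there is no $|x-y|$ gain to exploit. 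The trivial bound $|\calD(x)-\calD(y)|\leq|\calD(x)|+|\calD(y)|\leq M|\calD|(x)+M|\calD|(y)$ you invoke is true but useless here: the integrand is $K_h\,(\calD(x)-\calD(y))(\rho_n(x)+\rho_n(y))(\rho_n(x)-\rho_n(y))\,w_n$, which is only \emph{linear} in $(\rho_n(x)-\rho_n(y))$, whereas the damping controls the \emph{quadratic} quantity $K_h(\rho_n(x)-\rho_n(y))^2(\Psi_n(x)+\Psi_n(y))w_n$. After Young's inequality you are left with a term of the form $K_h(\rho_n(x)+\rho_n(y))^2(M|\calD|(x)+M|\calD|(y))w_n$, and upon integrating in $h$ and dividing by $\|\calK_{h_0}\|_{L^1}\sim|\log h_0|$ this quantity does \emph{not} vanish as $h_0\to 0$: it stays of order one. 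Hence the Gronwall step does not give $\sup_t \mathcal{R}_{n,h}\leq C$ uniformly in $h$, and the Riesz--Fr\'echet--Kolmogorov criterion cannot be reached. Said differently: if the $\calD$-term could be absorbed into the weight, the deep part of the Bresch--Jabin machinery would be unnecessary, which is a strong signal that something is off.

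What the paper actually does at this point (the key step your proof is missing) is to invoke Lemma~8.3 of \cite{BJ} (reproduced here as Lemma~\ref{lem83}), a compensated-compactness-type result that exploits the \emph{time} regularity of $\rho_n\vu_n$ --- namely the uniform bound on $\pa_t(\rho_n\vu_n)$ in $L^2(0,T;W^{-1,\bar q})$ coming from the momentum equation --- to show directly that $\iintTO{\overline{K_h}(x-y)(\calD(\rho_n\vu_n)(x)-\calD(\rho_n\vu_n)(y))\Phi}\leq Ch^\theta(\|\Phi\|_{L^\infty}+C_\Phi)$. To apply it one must first truncate $\rho_n$ at a level $L$ (so that the test function $\Phi$ in \eqref{eq:Phi} is bounded with bounded time derivative), estimate the truncation error by $CL^{-\alpha}\|K_h\|_{L^1}$, and then optimize $L=h^{-\theta/(\alpha+2)}$ to obtain a term $Ch^{\theta_0}\|K_h\|_{L^1}$ which, once integrated in $h$ and divided by $|\log h_0|$, vanishes. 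Your inclusion of $M|\calD|$ (and of $p_n$) in the weight damping $\Psi_n$ is therefore not the right device; the pressure contribution is disposed of by its sign exactly as you observe, but the $\calD$-contribution requires the genuinely non-local argument of Lemma~\ref{lem83}. The rest of your outline (choice of weight, removal of the weight via the bound on $\int\rho_n|\log w_n|$, and the final appeal to the compactness criterion) matches the paper and is sound once this gap is filled.
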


The rest of this section is dedicated to the proof of this fact.

\subsection{A compactness criterion}
In order to prove local compactness for the density sequence $\{\rho_n\}_{n=1}^\infty$
we use a compactness criterion, for the proof of which we refer the reader to \cite[Lemma 3.1]{Belgacem}, or \cite[Proposition 4.1]{BJ}. This criterion was applied to the study of Navier-Stokes equations with non-monotone pressure and anisotropic stress tensor in the aforementioned papers \cite{BJ,BJ_short}.

Let us first introduce the necessary notations.

We define a family $\{K_h\}_{h>0}$ of nonnegative function by 
$$\ds K_h(x)=\frac{1}{(|x|^2+h^2)^{d/2}}$$ 
for $|x|\leq 1$. Otherwise, $K_h$ belongs to $C^\infty(\RR^d\setminus B(0,1))$ and is compactly supported
in $B(0,2)$. Moreover $K_h$ is equal to some function $K(x)$ independent on $h$ outside $B(0,3/2)$.
We will also make use of the inequality 
\begin{equation}\label{ineqK}
|x||\nabla K_h(x)| \leq C K_h(x),
\end{equation}
which holds for some nonnegative constant $C$ independent of $h$,
thanks to our choice for $K_h$.
We also denote
$$
\overline{K_h}(x) = \frac{K_h(x)}{\|K_h\|_{L^1(\RR^d)}},
\qquad
\calK_{h_0}(x) = \int_{h_0}^1 \overline{K_h}(x) \frac{dh}{h}.
$$

Then  the compactness criterion states what follows.
\begin{lemma}\label{lem:compact}
Assume $\{\rho_n\}_{n=1}^\infty$ is a sequence of functions uniformly bounded in $L^q((0,T)\times\RR^d)$
with $1\leq q<+\infty$. 
If $\{\pa_t \rho_n\}_{n=1}^\infty$ is uniformly bounded in $L^r([0,T],W^{-1,r}(\RR^d))$ with 
$r\geq 1$ and  
$$
\underset{n}{\lim\sup} \left( \frac{1}{\|K_h\|_{L^1}} \iintO{
K_h(x-y)|\rho_n(x)-\rho_n(y)|^q}\right) \to 0, \quad \mbox{ as } h\to 0. 
$$
Then, $\{\rho_n\}_{n=1}^\infty$ is compact in $L^q_{loc}([0,T]\times\RR^d)$.
Conversely, if $\{\rho_n\}_{n=1}^\infty$ is compact in $L^q_{loc}([0,T]\times\RR^d)$, 
then the above $\lim\sup$ converges to $0$ as $h$ goes to $0$.
\end{lemma}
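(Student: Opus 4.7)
The plan is to regularize $\rho_n$ by convolution with the probability kernel $\overline{K_h}$, use the weighted oscillation bound of the hypothesis to control the approximation error \emph{uniformly in $n$}, and then invoke Aubin--Lions compactness on the regularized sequence combined with a diagonal extraction.

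First I would set $\rho_n^h(t,x) := (\rho_n(t,\cdot) \ast \overline{K_h})(x)$. Since $\overline{K_h}$ is a probability density, Jensen's inequality yields pointwise
$$ |\rho_n(t,x) - \rho_n^h(t,x)|^q \leq \int_{\RR^d} \overline{K_h}(x-y)\, |\rho_n(t,x)-\rho_n(t,y)|^q\, dy. $$
Integrating in $(t,x)$ and using the definition of $\overline{K_h}$, one obtains
$$ \|\rho_n - \rho_n^h\|_{L^q((0,T)\times\RR^d)}^q \leq \int_0^T \frac{1}{\|K_h\|_{L^1}}\int_{\RR^{2d}} K_h(x-y)\,|\rho_n(t,x)-\rho_n(t,y)|^q\, dx\, dy\, dt, $$
which, by the hypothesis, tends to $0$ as $h \to 0$ uniformly in $n$.

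Next, for each fixed $h > 0$, the smoothness of $\overline{K_h}$ and Young's inequality give uniform-in-$n$ bounds on $\rho_n^h$ and $\nabla \rho_n^h$ in $L^q((0,T)\times\RR^d)$; similarly the $L^r([0,T];W^{-1,r})$ bound on $\partial_t \rho_n$ transfers, via convolution with the smooth kernel $\overline{K_h}$, to a uniform bound on $\partial_t \rho_n^h$ in $L^r([0,T]; L^r_{loc}(\RR^d))$. The Aubin--Lions lemma then yields compactness of $\{\rho_n^h\}_n$ in $L^q_{loc}([0,T]\times\RR^d)$ for each $h$. Combining this with the uniform approximation of the previous paragraph via a diagonal argument shows total boundedness, hence relative compactness, of $\{\rho_n\}_n$ in $L^q_{loc}([0,T]\times\RR^d)$.

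For the converse implication the idea is to rewrite the $K_h$-integral as $\int_{\RR^d} \overline{K_h}(z)\,\|\rho_n(\cdot+z) - \rho_n\|_{L^q}^q\, dz$ and exploit that $\overline{K_h}$ is an approximate identity as $h \to 0$ (one checks $\int_{|z|>\varepsilon}\overline{K_h}\,dz \lesssim \log(1/\varepsilon)/\log(1/h) \to 0$), together with the uniform equicontinuity of translations that a compact family in $L^q_{loc}$ enjoys. The delicate point will be the Jensen step above: its success hinges on $\overline{K_h}$ being exactly a probability density, and the logarithmic divergence $\|K_h\|_{L^1} \sim \log(1/h)$ is precisely what makes $\overline{K_h}$ an approximate identity even though $K_h$ itself does not concentrate on the scale $h$ in the usual sense.
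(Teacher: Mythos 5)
The paper does not supply its own proof of Lemma~4.2; it cites \cite[Lemma 3.1]{Belgacem} and \cite[Proposition 4.1]{BJ}, and your argument reproduces the essential mechanism of those references: Jensen's inequality (using that $\overline{K_h}$ is a probability density) bounds $\|\rho_n-\rho_n*\overline{K_h}\|_{L^q}^q$ by exactly the normalized oscillation functional, the mollified sequence is compact for each fixed $h$ from the transferred space and time bounds, and the logarithmic normalization $\|K_h\|_{L^1}\sim|\log h|$ is what makes $\overline{K_h}$ an approximate identity even though $K_h$ does not concentrate on scale $h$. Your proposal is correct in outline; the one cosmetic difference is that you invoke Aubin--Lions for the fixed-$h$ compactness whereas the cited works go through the Riesz--Fr\'echet--Kolmogorov translation criterion directly, but both routes use the same ingredients and yield the same conclusion.
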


\subsection{Definition of the weights}

Let us define the weights $w_n$ as solutions of the transport equation
\begin{equation}\label{eq:weight}
\pa_t w + \vu_n \cdot \nabla w = -\lambda B_n w, \qquad B_n = M|\nabla \vu_n|, 
\end{equation}
complemented with the initial data $w(t=0)=1$. 
Here $\lambda$ is some nonnegative constant which will be fixed later on.
To simplify the notations, we drop the index $n$ denoting the weight simply by $w$.

By $M$ we denote the maximal operator, defined by
$$
M f(x) = \sup_{r\geq 1} \frac{1}{|B(0,r)|} \int_{B(0,r)} f(x+z)\,dz.
$$
Recall that we have the following inequality (see e.g. \cite{Stein})
$$
|\Phi(x)-\Phi(y)| \leq C |x-y| (M|\nabla \Phi|(x) + M|\nabla \Phi|(y)),
$$
for any $\Phi$ in $W^{1,1}(\RR^d)$.
Note that, thanks to Lemma \ref{lem:nrj} and Lemma \ref{lem:pL2},
we have that $B_n$ defined in \eqref{eq:weight} is uniformly bounded in $L^2([0,T]\times\RR^d)$. This allows us to deduce the following properties of the weight $w$.

\begin{proposition}\label{prop:w}
Let us assume that $\vu_n$ is given and that it is  bounded in $L^2_{loc}([0,T]\times\RR^d)\cap L^\infty(0,T;H^1(\RR^d))$ uniformly with respect to $n$.
Then, there exists a unique solution to \eqref{eq:weight}. Moreover, we have
\begin{itemize}
\item[(i)] For any $(t,x)\in(0,T)\times\RR^d$, $0\leq w(t,x) \leq 1$.
\item[(ii)] If we assume moreover that the pair $(\rho_n,\vu_n)$ is a solution to \eqref{eq:NS1} and $\vr_n$
is uniformly bounded in $L^2([0,T]\times\RR^d)$, there exists $C\geq 0$, such that
\begin{equation}\label{boundlogw}
\intO{ \rho_n |\log w| } \leq C \lambda.
\end{equation}
\end{itemize}
\end{proposition}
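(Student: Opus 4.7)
\emph{Existence, uniqueness, and part (i).} The equation \eqref{eq:weight} is a linear transport with source term, whose velocity field $\vu_n$ lies in $L^\infty(0,T;H^1(\RR^d))$ and whose reaction coefficient $B_n=M|\nabla\vu_n|$ is controlled in $L^\infty(0,T;L^2(\RR^d))$ by the Hardy--Littlewood maximal inequality. Existence and uniqueness of a bounded renormalized solution with datum $w|_{t=0}=1$ then follow from DiPerna--Lions theory \cite{DL}. The two-sided bound $0\le w\le 1$ is transparent at the characteristic level: if $X_n(s,x)$ denotes the regular Lagrangian flow of $\vu_n$, then
\begin{equation*}
w(t,X_n(t,x)) = \exp\lr{-\lambda\int_0^t B_n(s,X_n(s,x))\,\dt} \in [0,1],
\end{equation*}
since $\lambda,B_n\ge 0$. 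At the PDE level, the same is obtained by testing the renormalized identity $\pa_t\beta(w)+\vu_n\cdot\nabla\beta(w)=-\lambda B_n w\,\beta'(w)$ with $\beta(s)=(s-1)_+$ and then $\beta(s)=s_-$, concluding $(w-1)_+\equiv 0$ and $w_-\equiv 0$.

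\emph{Key identity for part (ii).} The strategy is to propagate $\rho_n|\log w|=-\rho_n\log w$ in time (using $w\le 1$) and close the estimate via Gronwall's inequality. Formally, dividing \eqref{eq:weight} by $w$ on $\{w>0\}$ gives
\begin{equation*}
\pa_t(-\log w)+\vu_n\cdot\nabla(-\log w)=\lambda B_n, \qquad (-\log w)|_{t=0}=0.
\end{equation*}
Combining this with the continuity equation \eqref{eq:NS1} produces the conservative identity
\begin{equation*}
\pa_t\bigl(\rho_n(-\log w)\bigr)+\dv\bigl(\rho_n\vu_n(-\log w)\bigr)=\rho_n G(p_n)(-\log w)+\lambda\rho_n B_n.
\end{equation*}

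\emph{Closing the estimate.} Integrating in space makes the divergence term vanish, and since $p_n\ge 0$, hypothesis \eqref{hypG} yields $G(p_n)\le G_0P_M$, hence
\begin{equation*}
\frac{d}{dt}\intO{\rho_n(-\log w)} \le G_0P_M\intO{\rho_n(-\log w)} + \lambda\intO{\rho_n B_n}.
\end{equation*}
Because $(-\log w)(0,\cdot)=0$, Gronwall's inequality gives
\begin{equation*}
\intO{\rho_n(-\log w)}(t) \le \lambda\, e^{G_0P_MT}\intTO{\rho_n B_n}.
\end{equation*}
Cauchy--Schwarz together with the $L^2$ bound on $\rho_n$ assumed in (ii) and the maximal function bound $\|B_n\|_{L^2((0,T)\times\RR^d)}\le C\|\nabla\vu_n\|_{L^2((0,T)\times\RR^d)}$ (which is uniformly controlled by Lemma \ref{lem:nrj}) then yield the desired estimate $\intO{\rho_n|\log w|}\le C\lambda$.

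\emph{Main obstacle.} The only delicate point is the possible vanishing of $w$, for which $\log w$ is a priori undefined. The natural remedy is to carry out the argument with the Lipschitz renormalization $\beta_\epsilon(w)=-\log(w+\epsilon)$, whose evolution has source $\lambda B_n w/(w+\epsilon)\le\lambda B_n$, so the Gronwall estimate above holds uniformly in $\epsilon$; the limit $\epsilon\to 0$ is then performed by monotone convergence.
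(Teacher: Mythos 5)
Your proof takes essentially the same route as the paper: DiPerna--Lions theory for existence and the bounds $0\le w\le 1$, renormalization of \eqref{eq:weight} to get the transport equation for $-\log w$, combination with the continuity equation \eqref{eq:NS1}, Gronwall, and finally Cauchy--Schwarz with the $L^2$ bounds on $\rho_n$ and $B_n$. The only difference is that you flag the potential degeneracy where $w$ vanishes and fix it by renormalizing with $\beta_\epsilon(w)=-\log(w+\epsilon)$ and passing $\epsilon\to0$ by monotone convergence, a point the paper passes over silently; this is a worthwhile clarification but not a different argument.
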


\begin{proof}
$(i)$ Since 
$B_n\in L^2([0,T]\times\RR^d)$, and $\vu_n\in L^2_{loc}([0,T]\times\RR^d)\cap L^\infty(0,T;H^1(\RR^d))$, by standard theory of renormalized solutions to the transport equations  \cite{DL}, 
we may construct a nonnegative
solution to \eqref{eq:weight}. Moreover, since $B_n$ is nonnegative, we have clearly that $w\leq 1$, since it is true initially.

$(ii)$ From part  $(i)$, we have $|\log w|=-\log w$.
By renormalization of equation \eqref{eq:weight}, we have
$$
\pa_t |\log w| + \vu_n \cdot \nabla |\log w| = \lambda B_n.
$$
Therefore, using also the continuity equation \eqref{eq:NS1}, we get
$$
\pa_t (\rho_n |\log w|) + \dv(\rho_n \vu_n |\log w|) = \rho_n |\log w| G(p_n)
+ \lambda \rho_n B_n.
$$
We integrate it in space and use \eqref{hypG} to deduce
$$
\frac{d}{dt} \intO{ \rho_n |\log w|} \leq G_0P_M \intO{ \rho_n |\log w|}
+ \lambda \intO{ \rho_n B_n }.
$$
Using the Gromwall lemma, we obtain
$$
\intO{\rho_n |\log w|(T,x)} \leq \lambda e^{G_0P_M T}  \intTO{ \rho_n B_n}.
$$
Finally, since $B_n$ and $\rho_n$ are uniformly bounded in $L^2((0,T)\times\RR^d)$,
we conclude using the Cauchy-Schwarz inequality.
\end{proof}

\subsection{Propagation of regularity for the transport equation}\label{sec:compactrho}

We first consider the transport equation \eqref{eq:NS1} with the pressure law
\eqref{eq:p} without the coupling through the velocity field $\vu_n$.
Taking the difference of the equations \eqref{eq:NS1} satisfied by $\rho_n(x)$
and $\rho_n(y)$, we get
\begin{align*}
&\pa_t (\rho_n(x)-\rho_n(y)) + \dv_x (\vu_n(x)\lr{\rho_n(x)-\rho_n(y)}) +
\dv_y (\vu_n(y)\lr{\rho_n(x)-\rho_n(y)})  \\
&=\frac 12 (\dv_x \vu_n(x) + \dv_y \vu_n(y)) \lr{\rho_n(x)-\rho_n(y)} \\
&\quad - \frac 12 (\dv_x \vu_n(x)-\dv_y \vu_n(y))(\rho_n(x)+\rho_n(y))  \\
&\quad + \lr{\rho_n(x) G(p_n(x)) -\rho_n(y) G(p_n(y))} .
\end{align*}
multiplying by $(\rho_n(x)-\rho_n(y))$, 
we deduce
\begin{align*}
&\frac{1}{2}\pa_t (\rho_n(x)-\rho_n(y))^2 + \frac{1}{2}\dv_x (\vu_n(x)\lr{\rho_n(x)-\rho_n(y)}^2) +
\frac{1}{2}\dv_y (\vu_n(y)\lr{\rho_n(x)-\rho_n(y)}^2) \\
&= - \frac 12 (\dv_x \vu_n(x)-\dv_y \vu_n(y))(\rho_n(x)+\rho_n(y)) \lr{\rho_n(x)-\rho_n(y)} \\
&\quad + \lr{\rho_n(x) G(p_n(x)) -\rho_n(y) G(p_n(y))} \lr{\rho_n(x)-\rho_n(y)}.
\end{align*}
This computation can be made rigorous using renormalization
technique \cite{DL}.
We observe that thanks to our pressure law in \eqref{eq:p}, we have that
$\mbox{sign }(\rho_n(x)-\rho_n(y)) = \mbox{sign }(p_n(x)-p_n(y)).$
Then, we can rearrange the last term of the right hand side as
\begin{align*}
&(\rho_n(x) G(p_n(x)) -\rho_n(y) G(p_n(y)) \lr{\rho_n(x)-\rho_n(y)}\\
& =
G_0 P_M\lr{\rho_n(x)-\rho_n(y)}^2  -G_0\lr{\rho_n(x)^{\gamman+1}-\rho_n(y)^{\gamman+1}} \lr{\rho_n(x)-\rho_n(y)}  \\
& \leq G_0P_M\lr{\rho_n(x)-\rho_n(y)}^2,
\end{align*}
where we use the definition of $G$ \eqref{hypG}.
Moreover, since $p_n$ is nonnegative, $G(p_n)\leq G_0P_M$. We arrive at
\eq{\label{eq:drho1}
&\frac{1}{2}\pa_t (\rho_n(x)-\rho_n(y))^2 + \frac{1}{2}\dv_x (\vu_n(x)\lr{\rho_n(x)-\rho_n(y)}^2) +
\frac{1}{2}\dv_y (\vu_n(y)\lr{\rho_n(x)-\rho_n(y)}^2) \\
& \leq - \frac 12 (\dv_x \vu_n(x)-\dv_y \vu_n(y))(\rho_n(x)+\rho_n(y)) \lr{\rho_n(x)-\rho_n(y)} \\
&\quad + G_0P_M\lr{\rho_n(x)-\rho_n(y)}^2. 
}
We then introduce
$$
R(t) = \frac12\iintO{K_h(x-y) \lr{\rho_n(x)-\rho_n(y)}^2 (w(x)+w(y))},
$$
and
$$
\calR_{h_0}(t) = \frac12\iintO{\calK_{h_0}(x-y) \lr{\rho_n(x)-\rho_n(y)}^2(w(x)+w(y))}
= \frac{1}{\|K_h\|_{L^1}} \int_{h_0}^1 R(t)\frac{{\rm d}h}{h},
$$
where the weights $w$ satisfy \eqref{eq:weight}.

Using \eqref{eq:drho1} and the symmetry of $K_h$, we deduce
\begin{equation}\label{eqdtR}
\frac{d}{dt} R(t) \leq A_1 + A_2 + A_3 + G_0P_M R(t),
\end{equation}
where
$$
A_1 = \frac12\iintO{\nabla K_h(x-y) (\vu_n(x)-\vu_n(y)) \lr{\rho_n(x)-\rho_n(y)}^2(w(x)+w(y))},
$$
$$
A_2 =  \iintO{ K_h(x-y)\lr{\rho_n(x)-\rho_n(y)}^2(\pa_t w(y) + \vu_n(y)\cdot\nabla w(y) )},
$$
$$
{A_3 = -2 \iintO{ K_h(x-y)(\dv \vu_n(x) - \dv \vu_n(y)) \lr{\vr_n(x)-\vr_n(y)}\rho_n(x)w(x) }.}
$$
\subsubsection*{Estimate of $A_1$}
The term $A_1$ is the same as in \cite{BJ,BJ_short}. For the sake of completeness we recall how to estimate it below. First, we make use of the following inequality
$$
|\vu_n(x)-\vu_n(y)|\leq C|x-y|\big(D_{|x-y|}\vu_n(x)+D_{|x-y|}\vu_n(y)\big),
$$
where $D_h\vu_n(x)=\frac{1}{h} \int_{|z|\leq h} \frac{|\nabla \vu_n(x+z)|}{|z|^{d-1}}\,dz$. 
Recall that $D_n \vu_n \leq M |\nabla \vu_n|$.
For the proof we refer the reader to \cite[Lemma 3.1]{Jabin2010}.
Then, using inequality \eqref{ineqK} and the symmetry of $K_h$ we get
\begin{align*}
A_1 &\leq C\iintO{ |x-y| \nabla K_h(x-y)\big(D_{|x-y|}\vu_n(x)+
D_{|x-y|} \vu_n(y)\big) \lr{\rho_n(x)-\rho_n(y)}^2(w(x)+w(y))}  \\
&\leq C \iintO{ K_h(x-y)|D_{|x-y|}\vu_n(x)+D_{|x-y|}\vu_n(y)| \lr{\rho_n(x)-\rho_n(y)}^2 w(y)}.
\end{align*}
Next, we integrate in $h$ on $(h_0,1)$. Using that
$$
D_{|x-y|}\vu_n(x)+D_{|x-y|}\vu_n(y) = D_{|x-y|}\vu_n(x)- D_{|x-y|}\vu_n(y) + 2D_{|x-y|}\vu_n(y),
$$ 
and changing the variables $z=x-y$, we may apply the Cauchy-Schwarz inequality
and the uniform $L^4$ bound on $\rho_n$ to deduce
\begin{align*}
\int_{h_0}^1 \frac{A_1}{\|K_h\|_{L^1}}\,\frac{{\rm d}h}{h} \leq & \ C
\int_{h_0}^1  \int_{\RR^{d}} \overline{K_h}(z) \|D_{|z|}\vu_n(\cdot) - D_{|z|}\vu_n(\cdot+z)\|_{L^2} \,{\rm d}z\,\frac{\dh}{h}  \\
& + C \iintO{\calK_{h_0}(x-y) D_{|x-y|} \vu_n(y)\lr{\rho_n(x)-\rho_n(y)}^2 w(y)}.
\end{align*}
We may bound $D_{|x-y|}\vu_n$ by the Maximal operator $M|\nabla \vu_n|$, thus
\eq{\label{estimA1}
\int_{h_0}^1 \frac{A_1}{\|K_h\|_{L^1}}\,\frac{\dh}{h} \leq & \ C
\int_{h_0}^1  \int_{\RR^{d}} \overline{K_h}(z) \|D_{|z|}\vu_n(\cdot) - D_{|z|}\vu_n(\cdot+z)\|_{L^2} \,{\rm d}z\,\frac{\dh}{h}  \\
& + C \iintO{ \calK_{h_0}(x-y) M|\nabla \vu_n(y)|\lr{\rho_n(x)-\rho_n(y)}^2 w(y)}.  
}
The second term on the right hand side of \eqref{estimA1} will be controlled by the term $A_2$.

\subsubsection*{Estimate of $A_2$}
From \eqref{eq:weight}, we have
$$
A_2 = \iintO{K_h(x-y) \lr{\rho_n(x)-\rho_n(y)}^2(-\lambda {B_n(y)} ) w(y) }.
$$
Therefore, combining the latter equality with \eqref{estimA1}, we deduce
\begin{align*}
\int_{h_0}^1 \frac{A_1+A_2}{\|K_h\|_{L^1}}\,\frac{\dh}{h} \leq & \ C
\int_{h_0}^1  \int_{\RR^{d}} \overline{K_h}(z) \|D_{|z|}\vu_n(\cdot) - D_{|z|}\vu_n(\cdot+z)\|_{L^2} \,{\rm d}z\,\frac{\dh}{h}  \\
&+ \iintO{ \calK_{h_0}(x-y) \lr{\rho_n(x)-\rho_n(y)}^2 w(y)
\Big( CM|\nabla \vu_n(y)| - \lambda {B_n(y)} \Big)}.
\end{align*}
From the definition of $B_n$ in \eqref{eq:weight}, we can find $\lambda$
large enough such that 
\begin{align}\label{estimA1B1}
\int_{h_0}^1 \frac{A_1+A_2}{\|K_h\|_{L^1}}\,\frac{\dh}{h} \leq  C \int_{h_0}^1  \int_{\RR^{d}} \overline{K_h}(z) \|D_{|z|}\vu_n(\cdot) - D_{|z|}\vu_n(\cdot+z)\|_{L^2} \,{\rm d}z\,\frac{\dh}{h} 
\end{align}

\subsubsection*{Estimate of $A_3$}
To estimate  the $A_3$ term, we first recall the link between $\dv\vu_n$ and $p_n$ \eqref{link:up}, and the notation 
$\calD(\rho \vu) = -(-\Delta)^{-1}\big(\dv(\rho \pa_t \vu + \rho \vu\cdot \nabla \vu))\big)$.
Then,
\eq{\label{estimD1}
A_3  =&  - 2\iintO{ K_{h}(x-y) (\dv \vu_n(x)-\dv \vu_n(y)) \rho_n(x) \lr{\vr_n(x)-\vr_n(y)} w(x)}\\
=& - \frac{2}{\mu+\xi} \iintO{ K_{h}(x-y)  \lr{p_n(x)-p_n(y)}\lr{\vr_n(x)-\vr_n(y)} \rho_n(x)  w(x)}\\
& - \frac{2}{\mu+\xi} \iintO{ K_{h}(x-y) \Big(\calD(\rho_n \vu_n)(x)-\calD(\rho_n \vu_n)(y)\Big) \lr{\vr_n(x)-\vr_n(y)}  \rho_n(x)  w(x)}.
}
Note that since $p_n=\rho_n^{\gamma_n}$ is increasing with respect to $\rho_n$, we have $(p_n(x)-p_n(y)) \lr{\vr_n(x)-\vr_n(y)}\geq 0$. Therefore, the first term in \eqref{estimD1} has a good sign when moved to the left hand side.

Thus, departing from \eqref{eqdtR} and integrating in $h$, we use 
\eqref{estimA1B1} and \eqref{estimD1} to deduce
\eq{\label{estimdtR0}
\Dt\calR_{h_0}(t) \leq & \ G(0) \calR_{h_0}(t) + C
\int_{h_0}^1 \int_{\RR^{d}} \overline{K_h}(z) \|D_{|z|}\vu_n(\cdot) - D_{|z|}\vu_n(\cdot+z)\|_{L^2(\RR^d)}\, {\rm d}z \,\frac{\dh}{h}  \\
& - \frac{2}{\mu+\xi} \iintO{\calK_{h_0}(x-y) (\calD(\rho_n \vu_n)(x)-\calD(\rho_n \vu_n)(y)) \lr{\vr_n(x)-\vr_n(y)}\rho_n(x)  w(x)}.
}
{The estimate of} the second term in \eqref{estimdtR0} follows from the following Lemma:
\begin{lemma}[Lemma 6.3 in \cite{BJ}]
For any $1<p<+\infty$, there exists $C>0$ such that for any $\vu\in H^1(\RR^d)$,
\begin{equation}\label{ineq:lem63}
\int_{h_0}^1 \int_{\RR^{d}} \overline{K_h}(z) \|D_{|z|}\vu(\cdot) - D_{|z|}\vu(\cdot+z)\|_{L^2(\RR^d)} \,{\rm d}z\, \frac{\dh}{h}
\leq C |\log h_0|^{1/2} \|\vu\|_{H^1(\RR^d)}.
\end{equation}
\end{lemma}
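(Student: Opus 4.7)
The strategy is to reduce the claim by two applications of Cauchy--Schwarz to a Plancherel-type estimate in Fourier variables. Since $\overline{K_h}(z)\,{\rm d}z$ is a probability measure on $\RR^d$, Cauchy--Schwarz in $z$ gives
\[
\int_{\RR^d} \overline{K_h}(z)\,\|D_{|z|}\vu(\cdot) - D_{|z|}\vu(\cdot+z)\|_{L^2}\,{\rm d}z
\leq \Big(\int_{\RR^d} \overline{K_h}(z)\,\|D_{|z|}\vu(\cdot) - D_{|z|}\vu(\cdot+z)\|_{L^2}^{2}\,{\rm d}z\Big)^{1/2}.
\]
A further Cauchy--Schwarz in $h\in(h_0,1)$ against the measure $\tfrac{\dh}{h}$, whose total mass equals $|\log h_0|$, produces the factor $|\log h_0|^{1/2}$ asserted in \eqref{ineq:lem63}. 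It then remains to establish the squared estimate
\[
I := \int_{h_0}^{1}\!\!\!\int_{\RR^d} \overline{K_h}(z)\,\|D_{|z|}\vu(\cdot) - D_{|z|}\vu(\cdot+z)\|_{L^2}^{2}\,{\rm d}z\,\frac{\dh}{h} \leq C\,\|\vu\|_{H^1(\RR^d)}^{2}.
\]

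To handle $I$, I would pass to the Fourier side. Note that $D_{h}\vu = \eta_h * |\nabla\vu|$, where $\eta_h(y) = \frac{1}{h|y|^{d-1}}\mathbf{1}_{|y|\leq h}$ is an $L^1$ kernel of mass $|S^{d-1}|$ independent of $h$, satisfying the scaling $\eta_h(y) = h^{-d}\eta_1(y/h)$ and hence $\widehat{\eta_h}(\xi) = \widehat{\eta_1}(h\xi)$. Plancherel's theorem then yields
\[
\|D_{|z|}\vu(\cdot) - D_{|z|}\vu(\cdot+z)\|_{L^2}^{2}
= \int_{\RR^d} |\widehat{\eta_1}(|z|\xi)|^{2}\,|1-e^{iz\cdot\xi}|^{2}\,|\widehat{|\nabla\vu|}(\xi)|^{2}\,{\rm d}\xi.
\]
By Fubini, the bound $I\leq C\|\vu\|_{H^1}^{2}$ reduces to the uniform-in-$\xi$ estimate
\[
\Psi(\xi) := \int_{h_0}^{1}\!\!\!\int_{\RR^d} \overline{K_h}(z)\,|\widehat{\eta_1}(|z|\xi)|^{2}\,|1-e^{iz\cdot\xi}|^{2}\,{\rm d}z\,\frac{\dh}{h} \leq C,
\]
which, together with $\|\,|\nabla\vu|\,\|_{L^2}=\|\nabla\vu\|_{L^2}\leq\|\vu\|_{H^1}$, closes the argument.

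The main obstacle is the $\xi$-uniform bound on $\Psi(\xi)$. The elementary inequalities $|1-e^{iz\cdot\xi}|^2\leq C\min(1,|z|^2|\xi|^2)$ and $|\widehat{\eta_h}|\leq|S^{d-1}|$ by themselves are not enough: since $K_h$ is concentrated on scale $h$ with $\|K_h\|_{L^1}\sim|\log h|$, a direct computation using only the $|z|^2|\xi|^2$ cutoff produces a spurious $\log|\xi|$ blow-up at high frequency. One must additionally exploit the decay of $\widehat{\eta_1}$ at infinity, which follows by expressing $\widehat{\eta_1}(\xi)$ as a radial integral of the Fourier transform of the surface measure on $S^{d-1}$ and invoking its classical stationary-phase decay; this yields a bound of the form $|\widehat{\eta_1}(\xi)|\leq C\min(1,|\xi|^{-1})$. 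Combined with the phase factor one obtains the symmetric control $|\widehat{\eta_1}(|z|\xi)|^{2}|1-e^{iz\cdot\xi}|^{2}\leq C\min\big((|z||\xi|)^{2},(|z||\xi|)^{-2}\big)$, which decays at both ends. The remaining task is a careful partition of $z$ into the regimes $|z|\leq h$, $h\leq |z|\leq 1/|\xi|$, $|z|\geq 1/|\xi|$ (where $K_h(z)\sim h^{-d}$, $|z|^{-d}$, or a bounded quantity, respectively), together with a matching split of $h$ relative to $1/|\xi|$, from which the $|\log h|^{-1}$ normalization hidden in $\overline{K_h}$ absorbs the logarithmic weight $\tfrac{\dh}{h}$. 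This bookkeeping is precisely the technical content of Lemma~6.3 in \cite{BJ}.
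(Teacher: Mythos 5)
The paper does not prove this lemma; it merely cites Lemma~6.3 of \cite{BJ}, so there is no internal proof to compare against. Your Fourier strategy --- writing $D_h\vu = \eta_h * |\nabla\vu|$ with $\eta_h(y)=h^{-1}|y|^{1-d}\mathbf{1}_{|y|\leq h}$, passing to Plancherel, and invoking the decay $|\widehat{\eta_1}(\zeta)|\leq C\min(1,|\zeta|^{-1})$ together with $|1-e^{i\theta}|\leq\min(2,|\theta|)$ --- is a legitimate line of attack, and the stated decay of $\widehat{\eta_1}$ is indeed correct.

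However, the reduction via the second Cauchy--Schwarz (in $h$) is too lossy, and the target you set yourself, $\Psi(\xi)\leq C$ uniformly in $\xi$ and $h_0$, is false. The sharp uniform-in-$\xi$ bound on the inner $z$-integral is
\[
\int_{\RR^d}\overline{K_h}(z)\,|\widehat{\eta_1}(|z|\xi)|^{2}\,|1-e^{iz\cdot\xi}|^{2}\,{\rm d}z
\;\leq\; \frac{C}{1+|\log h|},
\]
because $\int K_h(z)\min\bigl((|z||\xi|)^2,(|z||\xi|)^{-2}\bigr)\,{\rm d}z\leq C$ uniformly while $\|K_h\|_{L^1}\sim 1+|\log h|$; moreover this bound is saturated near $|\xi|\sim 1/h$. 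Integrating against $\frac{{\rm d}h}{h}$ therefore gives
$\Psi(\xi)\lesssim\int_{h_0}^{1}\frac{{\rm d}h}{h(1+|\log h|)}=\log(1+|\log h_0|)$, which blows up as $h_0\to0$ (and is attained for $|\xi|$ of order $1$). Consequently your $I$ is of size $\log(1+|\log h_0|)\,\|\vu\|_{H^1}^2$, and your two Cauchy--Schwarz steps produce $|\log h_0|^{1/2}\bigl(\log(1+|\log h_0|)\bigr)^{1/2}$, which overshoots the claimed $|\log h_0|^{1/2}$.

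The fix is simply to omit the Cauchy--Schwarz in $h$ and use the $h$-dependent bound directly. After Cauchy--Schwarz in $z$ alone and Plancherel you get
\[
\int_{h_0}^{1}\Bigl(\int_{\RR^d}\overline{K_h}(z)\|D_{|z|}\vu(\cdot)-D_{|z|}\vu(\cdot+z)\|_{L^2}^{2}\,{\rm d}z\Bigr)^{1/2}\frac{{\rm d}h}{h}
\;\leq\; C\|\vu\|_{H^1}\int_{h_0}^{1}\frac{1}{(1+|\log h|)^{1/2}}\,\frac{{\rm d}h}{h}
\;\leq\; C'\,|\log h_0|^{1/2}\,\|\vu\|_{H^1}.
\]
In other words, the $|\log h_0|^{1/2}$ in the lemma is supposed to come from the integrand decaying like $(1+|\log h|)^{-1/2}$, not from the measure $\frac{{\rm d}h}{h}$ having total mass $|\log h_0|$. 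Your decay estimates for $\widehat{\eta_1}$ and the $\min$-bound are the right ingredients; only the bookkeeping of where the logarithm enters needs to be corrected.
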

To estimate  the last term in \eqref{estimdtR0}, we use:
\begin{lemma}[Lemma 8.3 in \cite{BJ}]\label{lem83}
Assume that $\pa_t \rho_n + \dv(\rho_n \vu_n)=\vr_n G(p_n)$, and $(\rho_n,\vu_n)$ is such that
$$
\sup_n \Big(\|\rho_n\|_{L^\infty(0,T;L^1(\RR^d)\cap L^\gamma(\RR^d))} + 
\|\rho_n |\vu_n|^2\|_{L^\infty(0,T;L^1(\RR^d))} + \|\nabla \vu_n\|_{L^2((0,T)\times\RR^d)}\Big)
<\infty,
$$
for $\gamma>d/2$, and 
$$
\exists\, \bar{q}>1, \qquad \sup_n \|\pa_t(\rho_n \vu_n)\|_{L^2(0,T;W^{-1,\bar{q}}(\RR^d))} < \infty. 
$$
Consider $\Phi\in L^\infty((0,T)\times \RR^{2d})$ such that 
$$
C_\Phi := \left\|\int_{\RR^d} \overline{K_h}(x-y) \Phi(t,x,y)\,\dy \right\|_{W^{1,1}(0,T;W^{-1,1}_x(\RR^d))}
+ \left\|\int_{\RR^d} \overline{K_h}(x-y) \Phi(t,x,y)\,\dx \right\|_{W^{1,1}(0,T;W^{-1,1}_y(\RR^d))}
$$
is finite.
Then, there exists $\theta>0$ such that
$$
\iintTO{ \overline{K_h}(x-y) (\calD(\rho_n \vu_n)(t,x)-\calD(\rho_n \vu_n)(t,y)) \Phi(t,x,y)} \leq C h^\theta \big(\|\Phi\|_{L^\infty} + C_{\Phi}\big).
$$
\end{lemma}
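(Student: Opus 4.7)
The strategy is to convert $\calD(\rho_n\vu_n)$ into a sum of a time derivative of a Riesz-type potential and spatial Riesz/Calder\'on--Zygmund images of quantities we already control in $L^p$. Concretely, using the continuity equation $\pa_t \vr_n = -\dv(\vr_n\vu_n)+\vr_n G(p_n)$ to rewrite $\vr_n\pa_t\vu_n = \pa_t(\vr_n\vu_n)+\dv(\vr_n\vu_n)\vu_n-\vr_n G(p_n)\vu_n$ and combining with the convective term, one gets
\[
\calD(\vr_n\vu_n) \;=\; \pa_t\calA_n + \calB_n,
\qquad
\calA_n := -(-\lap)^{-1}\dv(\vr_n\vu_n),
\qquad
\calB_n := -(-\lap)^{-1}\dv\dv(\vr_n\vu_n\otimes\vu_n) + (-\lap)^{-1}\dv(\vr_n G(p_n)\vu_n).
\]
The a priori bounds of Lemmas \ref{lem:nrj}--\ref{lem:pL2} (plus $L^\infty$-control of $\vr_n$ in the limit) give that $\vr_n\vu_n$ is bounded in $L^2_t L^q_x$ for some $q\in(1,2]$ via H\"older, $\vr_n\vu_n\otimes\vu_n$ in $L^1_t L^r_x$ for some $r>1$ by interpolating $\sqrt{\vr_n}\vu_n\in L^\infty_t L^2_x$ with $\vu_n\in L^2_t L^{2^*}_x$, and $\vr_n G(p_n)\vu_n$ similarly in some $L^s_{t,x}$, so that $\calA_n$ and $\calB_n$ are uniformly bounded in $L^{p_0}((0,T)\times\RR^d)$ for some $p_0>1$ (CZ boundedness away from endpoints).

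Next I would treat the two pieces separately by Fubini, rewriting the contributions through the $\Phi$-smoothed kernels
\[
\Psi_h(t,x):=\int \overline{K_h}(x-y)\Phi(t,x,y)\,\dy,\qquad \widetilde\Psi_h(t,y):=\int \overline{K_h}(x-y)\Phi(t,x,y)\,\dx,
\]
so that
\[
\iintTO \overline{K_h}(x-y)\bigl(f(t,x)-f(t,y)\bigr)\Phi\,\dx\,\dy\,\dt
=\intTO f(t,x)\bigl(\Psi_h(t,x)-\widetilde\Psi_h(t,x)\bigr).
\]
For the spatial piece $\calB_n$, by hypothesis $\Psi_h,\widetilde\Psi_h$ are bounded in $L^1_t W^{-1,1}_x$ with constant $C_\Phi$, and they are also obviously bounded in $L^\infty_{t,x}$ by $\|\Phi\|_{L^\infty}$. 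Interpolating these two bounds at mollification scale $h^\alpha$, i.e.\ splitting $\calB_n = \eta_{h^\alpha}\!*\!\calB_n + (\calB_n-\eta_{h^\alpha}\!*\!\calB_n)$, lets one pair the smooth part against the $W^{-1,1}_x$-norm (gaining a factor $h^{-\alpha}$ from $\nabla\eta_{h^\alpha}$ against a gain $h$ from the kernel difference) and the rough part against $L^{p_0}$ boundedness (losing $h^{\alpha\sigma}$ with $\sigma\in(0,1)$ from a fractional smoothness gained from the CZ image lying in a small Besov space). Optimising $\alpha$ yields a bound $\lesssim h^\theta(\|\Phi\|_{L^\infty}+C_\Phi)$ for some $\theta>0$.

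For the time piece $\pa_t\calA_n$, I integrate by parts in $t$:
\[
\intTO (\pa_t\calA_n)(\Psi_h-\widetilde\Psi_h) = \Bigl[\intO \calA_n(\Psi_h-\widetilde\Psi_h)\Bigr]_0^T - \intTO \calA_n\,\pa_t(\Psi_h-\widetilde\Psi_h).
\]
The hypothesis $\Psi_h,\widetilde\Psi_h\in W^{1,1}(0,T;W^{-1,1}_x)$ with norm $\le C_\Phi$ provides exactly the $\pa_t$ control needed, while the boundary term uses the $L^\infty_t L^{p'_0}_x$ bound on $\calA_n$ (from the energy estimate on $\vr_n|\vu_n|^2$). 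The remaining integral then has the same structure as for $\calB_n$, and the same interpolation argument (smooth/rough split at scale $h^\alpha$, pairing the smooth part with $W^{-1,1}_x$ and the rough part with the uniform $L^{p_0}_{t,x}$ bound on $\calA_n$) yields the $h^\theta$ decay.

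The main obstacle is the interpolation step: neither $\calA_n$ nor $\calB_n$ lies in a classical Sobolev space uniformly in $n$, so one cannot directly gain a full derivative. The correct substitute is to use the duality between the $L^\infty$ and $W^{-1,1}_x$ bounds on the $\Phi$-smoothed kernels and the $L^{p_0}$ bounds on $\calA_n,\calB_n$, quantifying the gain at mollification scale $h^\alpha$ via the fact that CZ and Riesz operators map $L^{p_0}$ into a Besov space $B^{s,p_0}_\infty$ of arbitrarily small smoothness $s>0$ when the input itself has a small amount of smoothness (here supplied by $\vu_n\in L^2_t H^1_x$). Getting the bookkeeping of exponents so that the final power $\theta>0$ is strictly positive — and so that the hypothesis $\pa_t(\vr_n\vu_n)\in L^2_tW^{-1,\bar q}_x$ actually suffices to close the boundary terms and to control $\pa_t\calA_n$ as a distribution — is the delicate point; the rest is a bookkeeping exercise following the scheme of \cite{BJ}.
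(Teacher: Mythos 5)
You should first be aware of what the paper itself does here: it does not reprove this lemma at all. The statement is quoted as Lemma 8.3 of \cite{BJ}, and the only argument supplied is Remark \ref{rem:Lemma8.3}: the growth term cannot affect the cited proof because $\calD(\vr_n\vu_n)$ is $(-\lap)^{-1}\dv$ applied to the nonconservative momentum balance \eqref{eq:NS2nc}, in which $G(p)$ never appears. Your conservative-variable decomposition $\calD(\vr_n\vu_n)=\pa_t\calA_n+\calB_n$, with the extra flux $(-\lap)^{-1}\dv(\vr_n G(p_n)\vu_n)$ absorbed into $\calB_n$, is the counterpart of that remark, and the Fubini rewriting plus the time integration by parts against the $W^{1,1}(0,T;W^{-1,1}_x)$ control encoded in $C_\Phi$ is in the spirit of the argument of \cite{BJ}. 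So the architecture of your plan is reasonable, but you are attempting much more than the paper does, and the attempt does not close.

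The genuine gap sits exactly where the factor $h^\theta$ must be produced. The hypotheses on $\Phi$ furnish bounds ($\|\Phi\|_{L^\infty}$ and $C_\Phi$) with no built-in decay in $h$, so phrases such as ``a gain $h$ from the kernel difference'' are unsupported, and the temporal boundary terms in your integration by parts are merely bounded, not shown to be $O(h^\theta)$. More seriously, the mechanism you invoke for the rough part --- that $\calB_n$, the Calder\'on--Zygmund image of $\vr_n\vu_n\otimes\vu_n$, lies uniformly in a Besov space of small positive smoothness ``supplied by $\vu_n\in L^2(0,T;H^1)$'' --- cannot work: $\vr_n$ carries no uniform regularity whatsoever (its compactness is precisely what Section \ref{sec:compactness} is designed to prove), hence $\vr_n\vu_n\otimes\vu_n$ has no uniform fractional smoothness, and the zero-order operator $(-\lap)^{-1}\dv\dv$ creates none; the argument as sketched is circular on this point. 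The smallness has to be extracted from the finer structure that the hypotheses encode --- the full derivative gained in $\calA_n=-(-\lap)^{-1}\dv(\vr_n\vu_n)$, the uniform bound on $\pa_t(\vr_n\vu_n)$ in $L^2(0,T;W^{-1,\bar q}(\RR^d))$, and a decomposition over the scales of $K_h$ --- which is exactly the bookkeeping you defer, i.e.\ the actual content of Lemma 8.3 in \cite{BJ}. For this paper the efficient and intended route is the one in the text: cite that lemma and check, as in Remark \ref{rem:Lemma8.3}, that the new source term never enters the quantities its proof manipulates.
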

\begin{remark}\label{rem:Lemma8.3}
The only change in the statement of the above lemma with respect to Lemma 8.3 in \cite{BJ} is that in our case the continuity equation has an extra production term. Note however, that the operator $\calD(\vr\vu)$ is  the Riesz operator applied to the nonconservative form of the momentum transport, see \eqref{link:up}. However, the momentum equation in the nonconservative form {\eqref{eq:NS2nc}} does not include any extra contribution from $G(p)$. This makes the proof of Lemma \ref{lem83} the same as the proof of Lemma 8.3 from \cite{BJ}.
\end{remark}
In order to apply Lemma \ref{lem83}, we need to truncate the integrant in the last integral of \eqref{estimdtR0}. We introduce a smooth truncation function 
$\phi: [0,\infty)\to [0,1]$ such that  $0\leq \phi\leq 1$, $\phi(x)=1$ for $x\leq\frac 12$, and $\phi(x)=0$ for $x>1$.
We then split the last term in \eqref{estimdtR0} into two parts
\eqh{
&-\iintTO{K_h(x-y) (\calD(\rho_n \vu_n)(x)-\calD(\rho_n \vu_n)(y)) \lr{\vr_n(x)-\vr_n(y)}\rho_n(x) w(x)}\\
&=\int_0^T\!\!\!\int_{\RR^{2d}}K_h(x-y) (\calD(\rho_n \vu_n)(x)-\calD(\rho_n \vu_n)(y))\\
&\hspace{2cm}\times \lr{\vr_n(y)-\vr_n(x)}\rho_n(x)  w(x)\lr{1-\phi\Big(\frac{\rho_n(t,x)}{L}\Big)\phi\Big(\frac{\rho_n(t,y)}{L}\Big)}\,\dx\,\dy\,\dt\\
&\quad+\int_0^T\!\!\!\int_{\RR^{2d}}K_h(x-y) (\calD(\rho_n \vu_n)(x)-\calD(\rho_n \vu_n)(y)) \\
&\hspace{2cm}\times\lr{\vr_n(y)-\vr_n(x)} \rho_n(x)  w(x)\phi\Big(\frac{\rho_n(t,x)}{L}\Big)\phi\Big(\frac{\rho_n(t,y)}{L}\Big)\,\dx\,\dy\,\dt.}
Note that for some $\alpha>0$, we have
$$
1-\phi\Big(\frac{\rho_n(t,x)}{L}\Big) \phi\Big(\frac{\rho_n(t,y)}{L}\Big) \leq 
2^\alpha \frac{\rho_n(t,x)^\alpha+\rho_n(t,y)^\alpha}{L^\alpha},
$$
since the left hand side vanishes when $\rho_n(t,x)\leq L/2$ and $\rho_n(t,y)\leq L/2$. Therefore, for the same $\alpha>0$ upon using the Cauchy-Schwarz inequality, the uniform bounds on $\calD(\rho_n \vu\textcolor{magenta}{_n})$ in $L^2([0,T]\times\RR^d)$ (see Corollary \ref{cor:D}) and on $\rho_n$ in $L^\infty(0,T;L^{q}(\RR^d))$ for $q\in (1,\gamma_n)$, we obtain
\eq{\label{ineq11}
&-\iintTO{K_h(x-y) (\calD(\rho_n \vu_n)(x)-\calD(\rho_n \vu_n)(y)) \rho_n(x)  \lr{\vr_n(x)-\vr_n(y)} w(x)} \\
&\leq C \|K_h\|_{L^1} L^{-\alpha}\\
&\quad+\int_0^T\!\!\!\int_{\RR^{2d}}K_h(x-y) (\calD(\rho_n \vu_n)(x)-\calD(\rho_n \vu_n)(y)) \\
&\hspace{2cm}\times\lr{\vr_n(y)-\vr_n(x)} \rho_n(x)  w(x)\phi\Big(\frac{\rho_n(t,x)}{L}\Big)\phi\Big(\frac{\rho_n(t,y)}{L}\Big)\,\dx\,\dy\,\dt.
}
Then, we may apply Lemma \ref{lem83} with the function 
\begin{equation}\label{eq:Phi}
\Phi(t,x,y) = \lr{\vr_n(y)-\vr_n(x)}\rho_n(x)  w(x)\phi\Big(\frac{\rho_n(t,x)}{L}\Big)\phi\Big(\frac{\rho_n(t,y)}{L}\Big),
\end{equation}
By definition of the truncation $\phi$, we have that $\|\Phi\|_{L^\infty} \leq C L^2$. 
For the control on the time derivative of $\Phi$, we notice that $\Phi$ is a
combination of functions $\rho_n$ and $w$ which satisfy 
a transport equation with the same velocity field, but different right hand sides. 
Then,
\eqh{
&\pa_t \Phi + \dv_x(\vu_n(x)\Phi) + \dv_y(\vu_n(y)\Phi) \\
&= f_1 \dv_x \vu_n(x) + f_2 \dv_x \vu_n(y) + f_3 B_n(x) + f_4 B_n(y)+f_5\rho_n(x)G(p_n(x))+f_6\rho_n(y)G(p_n(y)),
}
where $B_n$ is defined in \eqref{eq:weight} and $G(p_n)$ is defined in \eqref{hypG}.
Every function $f_i$ contain as a factor $\phi(\rho_n/L)$ or a derivative of $\phi$. Then $\|f_i\|_{L^\infty}\leq C L^2$ for $i=1,\ldots,4$.
We deduce that the constant $C_\Phi$ in Lemma \ref{lem83} is bounded by $C L^2$.
Thus,
\eqh{
&-\iintTO{K_h(x-y) (\calD(\rho_n \vu_n)(x)-\calD(\rho_n \vu_n)(y)) \lr{\vr_n(x)-\vr_n(y)}\rho_n(x) w(x)} \\
&\leq C \|K_h\|_{L^1} (h^\theta L^2 + L^{-\alpha}).
}
Optimizing in $L$, i.e. choosing $L=h^{-\theta/(\alpha+2)}$, we deduce that
there exists $\theta_0>0$ such that  
\eq{\label{ineq2}
-\iintTO{ \overline{K_h}(x-y) (\calD(\rho_n \vu_n)(x)-\calD(\rho_n \vu_n)(y)) \lr{\vr_n(x)-\vr_n(y)}\rho_n(x) w(x)} \leq C h^{\theta_0}.
}
Finally, integrating in time \eqref{estimdtR0} and inserting \eqref{ineq:lem63}
and \eqref{ineq2}, we obtain for all $t\in[0,T]$
\begin{align}
  e^{-G_0P_Mt} \calR_{h_0}(t) \leq \calR_{h_0}(0) + C_T\left(|\log h_0|^{1/2}+\int_{h_0}^1 h^{\theta_0}\,\frac{\dh}{h} \right).
\label{boundRh0}
\end{align}

\subsection{Removing the weights and compactness argument}\label{sec:remove}

Let $\eta<1$. We define $\omega_\eta = \{x: w \leq \eta\}$ and denote by 
$\omega_\eta^c$ its complementary.
We have 
\eq{
&\iintO{\calK_{h_0}(x-y)\lr{\rho_n(x)-\rho_n(y)}^2}\\
&= \int_{h_0}^1 \iintO{\overline{K_h}(x-y) \lr{\rho_n(x)-\rho_n(y)}^2}\frac{\dh}{h} = I_1 + I_2, \label{eqI12}
}
with
\begin{align*}
I_1 & = \int_{h_0}^1 \int_{\{x\in\omega_\eta^c\}\cup\{y\in\omega_\eta^c\}} \overline{K_h}(x-y) \lr{\rho_n(x)-\rho_n(y)}^2\,\dx\,\dy\,\frac{\dh}{h} \leq \frac{2}{\eta} \calR_{h_0},
\end{align*}
and
\begin{align*}
I_2 & =\int_{h_0}^1 \int_{\{x\in\omega_\eta\}\cap\{y\in\omega_\eta\}} \overline{K_h}(x-y) \lr{\rho_n(x)-\rho_n(y)}^2\,\dx\,\dy\,\frac{\dh}{h}  \\
&\leq C\int_{h_0}^1 \int_{\{x\in\omega_\eta\}\cap\{y\in\omega_\eta\}} \overline{K_h}(x-y) \rho^2_n(x)\,\dx\,\dy\,\frac{\dh}{h}  \\
&\leq C\int_{h_0}^1\int_{\RR^d}\overline{K_h}(z)  \, {\rm d}z\int_{\{x\in\omega_\eta\}}\rho^2_n(x)\,\dx\,\frac{\dh}{h}  \\
&\leq C\int_{h_0}^1\int_{\{x\in\omega_\eta\}}\rho^2_n(x)\,\dx\,\frac{\dh}{h}  \\
&\leq C|\log h_0|\int_{\{x\in\omega_\eta\}}\rho^2_n(x)\,\dx 
\end{align*}
where we used the symmetry of $K_h$ and the fact that $\|\overline{K_h}\|_{L^1}=1$. To treat the last integral we recall an interpolation inequality
\eqh{\|\vr_n\|_{L^2(\Omega)}\leq \|\vr_n\|_{L^1(\Omega)}^\tau\|\vr_n\|_{L^q(\Omega)}^{1-\tau}\leq C\|\vr_n\|_{L^1(\Omega)}^\tau,}
for $\vr_n\in L^\infty(0,T; L^q(\RR^d))$, where $\tau=\frac{q-2}{2(q-1)}$. Therefore
\eqh{
I_2\leq C|\log h_0|\lr{\int_{\{x\in\omega_\eta\}}\rho_n(x)\,\dx}^{2\tau}\leq C|\log h_0|\lr{\int_{\RR^d}\rho_n(x)\frac{|\log w(x)|}{|\log \eta|}\,\dx}^{2\tau}\leq \frac{C|\log h_0|}{|\log \eta|^{2\tau}},
}
since for $\eta<1$, $|\log w(x)|\geq |\log \eta|$ for all $x\in\omega_\eta$, and the last inequality follows by \eqref{boundlogw}.
Inserting these estimates on $I_1$ and $I_2$ into \eqref{eqI12}, we arrive at
\begin{equation}\label{es:Khsansw}
\int_{\RR^{2d}} \calK_{h_0}(x-y)\lr{\rho_n(x)-\rho_n(y)}^2\,\dx\,\dy 
\leq \frac{2}{\eta} \calR_{h_0} + \frac{C|\log h_0|}{|\log \eta|^{2\tau}}.
\end{equation}
Finally, from \eqref{boundRh0}, we deduce
$$
\int_{\RR^{2d}} \calK_{h_0}(x-y)\lr{\rho_n(x)-\rho_n(y)}^2\,\dx\,\dy 
\leq \frac{2}{\eta} \left(\calR_{h_0}(0) + C_T \left(|\log h_0|^{1/2} + 1-h_0^{\theta_0}\right)\right) 
+ \frac{C|\log h_0|}{|\log \eta|^{2\tau}}.
$$
Since we have $\|\calK_{h_0}\|_{L^1} \sim |\log h_0|$, we obtain
\begin{equation}\label{eqII}
\int_{\RR^{2d}} \overline{\calK_{h_0}}(x-y)\lr{\rho_n(x)-\rho_n(y)}^2\,\dx\,\dy 
\leq \frac{C_T}{\eta} \left(\frac{\calR_{h_0}(0)+ 1-h_0^{\theta_0}}{|\log h_0|}
+ |\log h_0|^{-1/2}\right) 
+ \frac{C}{|\log \eta|^{2\tau}}.
\end{equation}
Note that $2\tau< 1$, choosing $\eta = |\log h_0|^{-1/4}$, $\eta \to 0$ when $h_0\to 0$.
Then 
\eqh{
&\int_{\RR^{2d}} \overline{\calK_{h_0}}(x-y)\lr{\rho_n(x)-\rho_n(y)}^2\,\dx\,\dy \\
&\leq C_T \left( |\log h_0|^{-1/4}\left(\frac{\calR_{h_0}(0)+ 1-h_0^{\theta_0}}{|\log h_0|^{1/2}} + 1\right) 
+ \frac{C}{|\log |\log h_0||^{2\tau}} \right).
}
Finally, we obtain the compactness of the sequence $\{\rho_n\}_n$, as stated in 
Proposition \ref{prop:rho}, by applying the compactness criterion in
Lemma \ref{lem:compact}.
Indeed the estimate on the time derivative is a direct consequence of the
conservation equation \eqref{eq:NS1} and of the energy estimate in Lemma \ref{lem:nrj}.

\section{Limiting system}\label{sec:limit}
This section is dedicated to the limit passage $n\to \infty$ in the definition of the weak solutions to the approximate system (Definition \ref{weaksol}). We will first gather together all the uniform estimates for the sequence of solutions $\{\vr_n,\vu_n,\vr_n^{\gamma_n}\}_{n=1}^\infty$ and pass to the limit in the continuity and the momentum equation. Then we prove the complementary relation \eqref{eqlim4}. Finally, we also prove the complementary relation $\dv\vu=G(p_\infty)$ on the set $\{\rho_\infty=1\}$.

\subsection{Convergence in the continuity and the momentum equations}
{Following the estimates of Section \ref{sec:estimates} and the compactness result in Section \ref{sec:compactness}, there exists $(\rho_\infty,p_\infty,u_\infty)$ such that, for $n\to +\infty$, up to a subsequence, we have}
\begin{align}
& \rho_n \to \rho_\infty \quad \mbox{ strongly in } L^2_{loc}([0,T]\times \RR^d),
  \mbox{ (Proposition \ref{prop:rho})} \label{conv_1}\\
& \rho_n \mbox{ is uniformly bounded in } L^\infty(0,T;L^q(\RR^d)), q\geq 1, 
  \mbox{ (Lemma \ref{lem:nrj})} \label{conv_2}\\
&{ p_n = \rho_n^{\gamma_n} \rightharpoonup p_\infty \quad \mbox{ weakly in } L^2([0,T]\times\RR^d), }
  \mbox{ (Lemma \ref{lem:pL2})}  \label{conv_3}\\
& \vu_n \rightharpoonup \vu_\infty \quad \mbox{ weakly in } L^2(0,T;H^1_{loc}(\RR^d)),
  \mbox{ (Lemma \ref{lem:nrj})}  \label{conv_4}\\
& 0 \leq \rho_\infty \leq 1.  \mbox{ (Lemma \ref{lem:rhoinf1})} \label{conv_5}
\end{align}
From \eqref{conv_1} and \eqref{conv_2} by interpolation of the Lebesgue spaces, we deduce that 
\eq{ \rho_n \to \rho_\infty \quad \mbox{ strongly in } L^q_{loc}([0,T]\times \RR^d), \ q\geq1. \label{conv_6}
}
In addition, the time derivative of $\partial_t\rho_n$ can be expressed by means of equation \eqref{eq:NS1}, therefore the Arzel\'a-Ascoli theorem and the uniform estimate \eqref{conv_2} imply that 
\eq{ \rho_n \to \rho_\infty \quad \mbox{ in } C_{w}([0,T]; L^q( \RR^d)), \ q\geq1. \label{conv_7}
}
Moreover, uniformly with respect to $n$ we have 
\eq{ \|\sqrt{\rho_n}\vu_n\|_{L^\infty(0,T; L^2(\RR^d))}+\|\vu_n\|_{L^2(0,T; L^{\frac{2d}{d-2}}(\RR^d))}\leq C,}
and so, using also \eqref{conv_2} we get
\eq{&\|\rho_n\vu_n\|_{L^\infty(0,T; L^{q_0}(\RR^d))}+\|\rho_n\vu_n\|_{L^2(0,T; L^{q_1}(\RR^d))}\\
&\quad+
\|\rho_n|\vu_n|^2\|_{L^1(0,T; L^{q_2}(\RR^d))}
+\|\rho_n|\vu_n|^2\|_{L^2(0,T; L^{q_3}(\RR^d))}
\leq C,\label{un_ru}
}
for $1\leq q_0<2$, $1\leq q_1<\frac{2d}{d-2}$, $1\leq q_2<\frac{2d}{2(d-2)}$, $q_3<\frac{d}{d-2}$, and therefore
\begin{align}
& \rho_n\vu_n \rightharpoonup \overline{\rho\vu} \quad \mbox{ weakly* in } L^\infty(0,T;L^{q_0}(\RR^d)),\\
& \rho_n\vu_n \rightharpoonup \overline{\rho\vu} \quad \mbox{ weakly in } L^2(0,T;L^{q_1}(\RR^d)),\\
  & \rho_n\vu_n \otimes\vu_n \rightharpoonup \overline{\rho\vu\otimes\vu}  \quad \mbox{ weakly in } L^2(0,T;L^{q_3}(\RR^d).\label{conv_conv}
\end{align}
Combining \eqref{conv_6} with \eqref{conv_4} we check that 
\begin{align*}
\rho_n \vu_n \rightharpoonup \rho_\infty \vu_\infty  \quad \mbox{ weakly in } L^p_{loc}([0,T]\times \RR^d),\ 1\leq p<2,
\end{align*}
and therefore from the uniqueness of the weak limit $\overline{\rho\vu}=\rho_\infty \vu_\infty$, and also
\begin{align}
\rho_n \vu_n \rightharpoonup \rho_\infty \vu_\infty  \quad \mbox{ weakly* in } L^\infty(0,T;L^{q_0}_{loc}(\RR^d)).
\end{align}
Using the estimates of $p_n$, $\rho_n$ and $\vu_n$, we deduce that $\pa_t (\rho_n \vu_n) $, given by \eqref{eq:NS2}, is uniformly bounded in
$$L^2(0,T;W^{-1,q_3}(\RR^d))+L^2(0,T;W^{-1,2}(\RR^d))+L^\infty(0,T; L^q(\RR^d))+L^2(0,T;L^p(\RR^d)),$$
 for $1\leq p<2$. This estimate might be used to identify the limit in \eqref{conv_conv}. To this purpose, we recall the following compensated-compactness lemma, see \cite[Lemma 3.3]{LM}.
\begin{lemma}
Let $T>0$.
Let $(g_n)_n$ and $(f_n)_n$ be two sequences converging weakly towards $g$ and $f$, respectively in $L^{p_1}(0,T;L^{p_2}(\RR^d))$ and $L^{q_1}(0,T;L^{q_2}(\RR^d))$,
where $1\leq p_1,p_2\leq +\infty$, $\frac{1}{p_1}+\frac{1}{q_1}=\frac{1}{p_2}+\frac{1}{q_2}=1$.
Let us assume in addition that 
\begin{align*}
&\pa_t g_n \mbox{ is bounded in } \calM(0,T;W^{-m,1}(\RR^d)) \mbox{ for some }
m\geq 0 \mbox{ independent of }n;  \\
& \|f_n\|_{L^1(0,T;H^s(\RR^d))} \mbox{ is bounded for some } s>0.
\end{align*}  
Then $f_ng_n$ converges to $fg$ weakly in $\calD'([0,T]\times\RR^d)$.
\end{lemma}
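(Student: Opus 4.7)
The plan is to split $g_n$ into a spatially mollified piece (which acquires strong time-compactness via Aubin--Lions from the bound on $\pa_t g_n$) and a high-frequency remainder (which is absorbed by the $H^s$ regularity of $f_n$). The two hypotheses of the lemma correspond exactly to these two ingredients, and the overall scheme mirrors what is done for the compressible limit in Lions--Masmoudi.

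\emph{Step 1: Aubin--Lions on the mollified sequence.} I would fix a radial, smooth spatial mollifier $(\rho_\ep)_{\ep>0}$ and set $g_n^\ep := g_n \ast_x \rho_\ep$. For each fixed $\ep>0$, Young's inequality gives that $\{g_n^\ep\}_n$ is bounded in $L^{p_1}(0,T;W^{k,p_2}(\RR^d))$ for every $k\ge 0$, while $\pa_t g_n^\ep = (\pa_t g_n) \ast_x \rho_\ep$ is bounded in $\calM(0,T;L^\infty(\RR^d))$ since convolution with $\rho_\ep$ sends $W^{-m,1}$ into $L^\infty$. On any compact $K\subset\RR^d$, the measure-valued Aubin--Lions--Simon lemma then yields relative compactness of $\{g_n^\ep\}_n$ in $L^{p_1}(0,T;L^{p_2}(K))$; uniqueness of the weak limit forces $g_n^\ep \to g^\ep := g \ast_x \rho_\ep$ strongly in $L^{p_1}(0,T;L^{p_2}_{loc}(\RR^d))$.

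\emph{Steps 2--3: passing to the limit and controlling the error.} Fix $\varphi \in C_c^\infty([0,T]\times\RR^d)$. The H\"older-conjugate duality $(p_1,p_2)\leftrightarrow(q_1,q_2)$ combined with $f_n \rightharpoonup f$ in $L^{q_1}(L^{q_2})$ and the strong convergence of $g_n^\ep$ from Step 1 yields
\begin{equation*}
\intTO{f_n g_n^\ep \varphi} \longrightarrow \intTO{f g^\ep \varphi} \longrightarrow \intTO{fg\varphi}, \qquad n\to\infty,\ \ep\to 0.
\end{equation*}
To close the argument I would control the error $I_n^\ep := \intTO{f_n (g_n - g_n^\ep)\varphi}$ uniformly in $n$ as $\ep\to 0$: since multiplication by the smooth compactly supported $\varphi$ preserves $H^s$, one has $\|f_n(t)\varphi(t)\|_{H^s} \le C_\varphi \|f_n(t)\|_{H^s}$. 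Combined with the standard mollification bound
\begin{equation*}
\|h - h\ast_x\rho_\ep\|_{H^{-s}(\RR^d)} \le C\ep^s \|h\|_{L^{p_2}(\mathrm{supp}\,\varphi)},
\end{equation*}
valid after possibly enlarging $s$ so that the Sobolev embedding $L^{p_2}_{loc}\hookrightarrow H^{-s}$ holds, and the $L^1(H^s)$ bound on $f_n$, one gets
\begin{equation*}
|I_n^\ep| \le \int_0^T \|f_n(t)\varphi(t)\|_{H^s}\,\|g_n(t)-g_n^\ep(t)\|_{H^{-s}}\,\dt \le C\ep^s.
\end{equation*}
A diagonal argument in $(n,\ep)$ then concludes.

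The hard part will be the error estimate in Step 3: the $H^s$ control on $f_n$ is only in the time-$L^1$ sense (rather than in a H\"older-conjugate Lebesgue scale matching $g_n$), which forces the spatial duality to be carried inside the time integral. Moreover, since $g_n$ lies in a general $L^{p_2}$, the mollification estimate is no longer the familiar Plancherel-based $L^2$--$H^{-s}$ one, but a non-Hilbertian bound in the $L^{p_2}$--$W^{-s,p_2}$ pair combined with the embedding $L^{p_2}_{loc}\hookrightarrow H^{-s}$, which may cost an additional dimensional amount of smoothness in $s$. Ensuring that the indices $(p_1,p_2,q_1,q_2,s,m,d)$ fit together consistently is the most delicate point.
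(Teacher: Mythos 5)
You should first note that the paper does not prove this lemma at all: it is quoted verbatim from Lions--Masmoudi and invoked as a black box (``we recall the following compensated-compactness lemma, see [LM, Lemma 3.3]''), so there is no in-paper argument to match your reconstruction against.

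Your scheme --- mollify $g_n$ in space so that the time-compactness coming from $\pa_t g_n\in\calM(0,T;W^{-m,1})$ can be upgraded by Aubin--Lions to strong $L^{p_1}(L^{p_2}_{loc})$ convergence of $g_n^\ep$, then absorb the error $\int f_n(g_n-g_n^\ep)\varphi$ using the $H^s$ regularity of $f_n$ --- is the natural one, and Steps 1--2 are fine. The genuine gap is precisely the point you flag and then wave away. The key estimate
\[
\|g_n - g_n^\ep\|_{H^{-s}(\mathrm{supp}\,\varphi)} \;\lesssim\; \delta(\ep)\,\|g_n\|_{L^{p_2}(\mathrm{supp}\,\varphi)}, \qquad \delta(\ep)\to 0,
\]
requires, by duality against $H^s$, that $H^{\sigma}(\RR^d)\hookrightarrow L^{q_2}_{loc}(\RR^d)$ for some $\sigma<s$ (the margin $s-\sigma$ is what produces the decay). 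When $q_2\le 2$ (i.e. $p_2\ge 2$) this is automatic for every $\sigma\ge 0$. But when $q_2>2$ it is a real Sobolev constraint, $q_2<\tfrac{2d}{d-2s}$ (for $s<d/2$), which the hypotheses do not impose: $s>0$ is arbitrary and unrelated to $p_2,q_2,d$. You cannot ``possibly enlarge $s$'' to fix this, because $s$ is the exponent for which $\|f_n\|_{L^1(H^s)}$ is assumed bounded; a larger $s$ is a different hypothesis. (The claimed rate $\ep^s$ is also off --- one gets $\ep^{s-\sigma}$ after routing through $H^{-\sigma}$ --- but that is cosmetic; the embedding is the substantive obstruction.) What Lions--Masmoudi actually assume on $f_n$ is translation equicontinuity in the H\"older-conjugate space, $\sup_n\|f_n(\cdot+\xi,\cdot)-f_n\|_{L^{q_1}(L^{q_2})}\to 0$ as $|\xi|\to 0$, which keeps the whole error pairing inside $L^{p_1}(L^{p_2})\times L^{q_1}(L^{q_2})$ and needs no Sobolev embedding at all; the $L^1(H^s)$ bound implies it only under the constraint above. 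That constraint does hold in the paper's application ($f_n=\vu_n$, $s=1$, $q_2<\tfrac{2d}{d-2}$), so nothing in the paper is endangered, but your proof of the lemma as stated (arbitrary $s>0$, arbitrary conjugate pair) has a hole at this step. To close it, either add the condition $q_2<\tfrac{2d}{d-2s}$ (or $s\ge d/2$) to the statement, or replace Step 3 by a direct verification that the two bounds on $f_n$ give Lions--Masmoudi's translation-equicontinuity hypothesis.
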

Taking $g_n=\rho_n\vu_n$ and $f_n=\vu_n$ in this lemma, 
we justify that \eqref{conv_conv} is in fact
\eq{\label{conv_adv}
\rho_n\vu_n \otimes\vu_n \rightharpoonup \rho_\infty \vu_\infty \otimes \vu_\infty  \quad \mbox{ weakly in } L^2(0,T;L^{q_3}_{loc}(\RR^d)).
}
{
The last task is to pass to the limit in the production term of the momentum equation $\vr_n G(p_n)\vu_n$. To this purpose we first note that this sequence is weakly  convergent in $L^p((0,T)\times\RR^d)$ for some $p>1$ to a limit denoted by $\overline{\vr_\infty G(p_\infty)\vu_\infty}$. To identify this limit, we will use \eqref{conv_3} and the strong convergence of the sequence $\{\vr_n\vu_n\}_{n=1}^\infty$. To deduce the latter, we first note that \eqref{conv_1} and \eqref{conv_4} imply that
$\sqrt{\vr_n}\vu_n\rightharpoonup\sqrt{\vr_\infty}\vu_\infty$ weakly in $L^2(0,T; L^q_{loc}(\RR^d))$ for $q<\frac{2d}{d-2}$. Next, as in \eqref{conv_adv} we show that for any compact set $K\subset \RR^d$ we have
\eqh{\| \sqrt{\vr_n}\vu_n\|^2_{L^2(0,T; L^2_{loc}(\RR^d))}=\intT{\!\!\!\int_{K}\rho_n|\vu_n|^2\,\dx} \to \intT{\!\!\!\int_{K}\rho_\infty |\vu_\infty|^2\,\dx}
=\| \sqrt{\vr_\infty}\vu_\infty\|^2_{L^2(0,T;L^2_{loc}(\RR^d))}.}
The weak convergence of $\sqrt{\vr_n}\vu_n$ and the convergence of the $L^2$-norm implies that $\sqrt{\vr_n}\vu_n$ converges to $\sqrt{\vr_\infty}\vu_\infty$ strongly in ${L^2(0,T; L^2_{loc}(\RR^d))}$. From \eqref{conv_6} and from the uniform bounds on $\vr_n\vu_n$ in \eqref{un_ru} it then follows that ${\vr_n}\vu_n$ converges to  $\vr_\infty\vu_\infty$ strongly in ${L^2(0,T; L^2_{loc}(\RR^d))}$.}

{
This concludes the proof of the passage to the limit in the continuity and in the momentum equations leading to the weak solution from Definition \ref{weaksollim}.}

\subsection{Passage to the limit in the congestion relation}
Here we follow a similar argument from \cite{LM}. In order to recover relation \eqref{eqlim4} we first see that for any $\delta>0$, there exists $n_0$ sufficiently large such that for $n\geq n_0$ we have
$$\vr_n^{\gamma_n+1}\geq \vr_n^{\gamma_n}-\delta.$$
Thus, passing with $n$ to the limit we obtain
$$\overline{\vr_n^{\gamma_n+1}}\geq p_\infty-\delta. $$
The limit on the left hand side can be immediately identified with $\rho_\infty p_\infty$, due to the strong convergence of $\vr_n$ and weak convergence of $p_n$. Therefore, letting $\delta\to0$, we get
$$\rho_\infty p_\infty\geq p_\infty. $$
Note however, that due to \eqref{conv_5}, $\rho_\infty\leq 1$, therefore $\rho_\infty p_\infty\leq p_\infty$, which implies that $\rho_\infty p_\infty= p_\infty$.

\subsection{Consistency relation}

In the following lemma, we show that conditions \eqref{sys:compressible} and \eqref{sys:incompressible} are compatible. This is provided by the equivalency of the following conditions.
\begin{lemma}\label{lem:compl}
Let $\vu\in L^2(0,T;H^1_{loc}(\RR^d))$, $\rho\in L^2_{loc}([0,T]\times\RR^d)$, and $G(p)\in L^2_{loc}([0,T]\times\RR^d)$, where $\rho\geq 0$ a.e. in $(0,T)\times\RR^d$
satisfy the transport equation
\begin{equation}\label{eqrhofin}
\pa_t \rho + \dv(\rho \vu) = \rho G(p) \quad \mbox{ in } (0,T)\times \RR^d,
\quad \rho(t=0)=\rho^0.
\end{equation}
Then the following two assertions are equivalent
\begin{itemize}
\item[(i)] $\dv \vu=G(p)$ a.e. on $\{\rho\geq 1\}$ and $0\leq \rho^0 \leq 1$,
\item[(ii)] $0\leq \rho \leq 1$, for any $t\in[0,T]$.
\end{itemize}
\end{lemma}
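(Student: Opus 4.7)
The plan is to apply the DiPerna--Lions renormalization to the transport equation \eqref{eqrhofin}, namely, for any smooth $\beta$ with suitable growth,
\begin{equation*}
\pa_t \beta(\rho) + \dv(\beta(\rho)\vu) = \lr{\beta(\rho) - \rho \beta'(\rho)} \dv \vu + \rho \beta'(\rho) G(p),
\end{equation*}
and then to pick $\beta$ cleverly for each direction of the equivalence.

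For (i) $\Rightarrow$ (ii), I would take $\beta(\rho) = (\rho-1)_+^2$. Both coefficients $\beta(\rho) - \rho\beta'(\rho) = -(\rho-1)_+(\rho+1)$ and $\rho\beta'(\rho) = 2\rho(\rho-1)_+$ are supported on $\{\rho \geq 1\}$; there, (i) allows replacing $\dv \vu$ by $G(p)$, and a direct cancellation reduces the renormalized equation to
\begin{equation*}
\pa_t \beta(\rho) + \dv(\beta(\rho)\vu) = \beta(\rho)\, G(p).
\end{equation*}
Testing against a non-negative cutoff $\phi \in C_c^\infty(\RR^d)$ approaching $1$, using $G(p) \leq G_0 P_M$ from \eqref{hypG}, and observing that $\rho^0 \leq 1$ in (i) forces $\beta(\rho^0) \equiv 0$, Gronwall's lemma yields $\beta(\rho)(t) \equiv 0$, i.e.\ $\rho \leq 1$.

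For (ii) $\Rightarrow$ (i), the bound $0 \leq \rho^0 \leq 1$ is immediate by taking $t=0$ in (ii). To recover $\dv \vu = G(p)$ on $\{\rho \geq 1\} = \{\rho = 1\}$, I would renormalize with $\beta_n(\rho) = \rho^n$ for large $n$, producing
\begin{equation*}
\pa_t \rho^n + \dv(\rho^n \vu) = \rho^n \lr{n G(p) - (n-1) \dv \vu}.
\end{equation*}
Pairing with $\phi \in C_c^\infty(\RR^d)$, $\phi \geq 0$, integrating in time and dividing by $n-1$, the factor $1/(n-1)$ kills the left-hand side (whose numerator is bounded uniformly in $n$ thanks to $\rho \leq 1$). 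On the right, $\rho^n \to \mathbf{1}_{\{\rho=1\}}$ pointwise, dominated by $1$, so bounded convergence gives
\begin{equation*}
\int_0^T\!\!\int_{\RR^d} \mathbf{1}_{\{\rho=1\}}\,\phi\,(G(p) - \dv \vu)\,\dx\,\dt = 0,
\end{equation*}
and arbitrariness of $\phi$ yields the claimed identity on $\{\rho = 1\}$.

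The main obstacle is rigorously justifying the renormalized equation under the stated weak regularity ($\vu \in L^2(0,T;H^1_{loc}(\RR^d))$, $\rho, G(p) \in L^2_{loc}$) in the presence of the source term $\rho G(p)$: the standard commutator argument adapts since $G(p)$ enters merely as a bounded multiplier, but the non-conservative nature of the source requires a slight modification. A secondary technical point is the exhaustion by spatial cutoffs in the Gronwall step of (i) $\Rightarrow$ (ii), where the convective contribution $\beta(\rho)\,\vu\cdot\nabla\phi$ must be handled via Cauchy--Schwarz using the local integrability of $\vu$ and $\rho$ available in the setting of Section \ref{sec:estimates}.
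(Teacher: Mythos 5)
Your direction (ii) $\Rightarrow$ (i), via $\beta_n(\rho)=\rho^n$ and letting $n\to\infty$, is essentially identical to the paper's argument. The issue is in (i) $\Rightarrow$ (ii): the choice $\beta(\rho)=(\rho-1)_+^2$ is not admissible for the DiPerna--Lions renormalization under the stated regularity. The renormalization commutator lemma requires $\beta$ with at most linear growth (the paper explicitly assumes $|\beta(y)|\leq C(1+y)$ in \eqref{eq:betarn}), precisely so that $\beta(\rho)$, $\beta(\rho)\vu$ and $\rho\beta'(\rho)G(p)$ are locally integrable when $\rho$ and $G(p)$ are merely $L^2_{loc}$ and $\vu\in L^2(0,T;H^1_{loc})$. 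With your quadratic $\beta$, the term $\rho\beta'(\rho)G(p)\sim\rho^2 G(p)$ needs $\rho^2 G(p)\in L^1_{loc}$, which is not available from $\rho,G(p)\in L^2_{loc}$; the flux $\beta(\rho)\vu\sim \rho^2\vu$ runs into the same problem. Since the whole point of the implication is to rule out large values of $\rho$, you cannot assume a priori boundedness to bypass this, and a truncation of $\beta$ would spoil the clean cancellation you rely on.

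The paper avoids this by choosing the bounded Lipschitz cutoff $\beta(y)=\min(y_+,1)$; then $\sigma=\beta(\rho)-\rho=-(\rho-1)_+$ satisfies $\pa_t\sigma+\dv(\sigma\vu)=\sigma G(p)$ on $\{\rho\geq1\}$ exactly as in your computation, and Gronwall closes the argument. Equivalently you could take $\beta(\rho)=(\rho-1)_+$ (Lipschitz, at most linear growth, regularized to be $C^1$), which gives $\pa_t\beta(\rho)+\dv(\beta(\rho)\vu)=\beta(\rho)G(p)$ directly. Either way, the key is linear (or sub-linear) growth of $\beta$, not quadratic; replacing your $\beta$ by one of these makes the (i) $\Rightarrow$ (ii) argument both correct and essentially the same as the paper's.
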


\begin{proof} We follow the idea from \cite[Lemma 2.1]{LM}. We first prove the implication $(i)$ $\Rightarrow$ $(ii)$.
From the renormalization property,
we have that for any $C^1$ function $\beta$ from $\RR$ to $\RR$ such that
$|\beta(t)|\leq C(1+t)$,
\begin{equation}\label{eq:betarho}
\pa_t \beta(\rho) + \dv(\beta(\rho) \vu) = (\beta(\rho)-\rho \beta'(\rho)) \dv \vu
+ \rho \beta'(\rho) G(p).
\end{equation}
We choose for $\beta$ the function
$$
\beta(y) = \left\{ \begin{array}{ll}
0, \qquad  & \mbox{ if } y\leq 0,  \\
y, \qquad  & \mbox{ if } y\in (0,1),  \\
1, \qquad  & \mbox{ if } y\geq 1.  \\
\end{array}\right.
$$
Then we get (after regularization and passing to the limit for the rigorous 
justification):
$$
\partial_t \beta(\rho) + \dv(\beta(\rho)\vu) = \mathbf{1}_{\{\rho\geq 1\}} \dv \vu
+ \mathbf{1}_{\rho\in (0,1)} \rho G(p).
$$
Denoting $\sigma=\beta(\rho)-\rho$ and subtracting from the latter equation  \eqref{eqrhofin}, we obtain
$$
\pa_t \sigma + \dv(\sigma\vu) = \mathbf{1}_{\{\rho\geq 1\}} G(p)(1 - \rho),
$$
where we used the assumption $\dv \vu = G(p)$ on $\{\rho\geq 1\}$.
Moreover, thanks to our choice of function $\beta$, we have $\sigma=\beta(\rho)-\rho=(1-\rho)\mathbf{1}_{\{\rho\geq 1\}}$.
Therefore, we arrive at
$$
\pa_t \sigma + \dv(\sigma\vu) = \sigma G(p).
$$
It is classical to deduce that $|\sigma|$ satisfies the same equation.
Integrating it over $\RR^d$, we obtain
$$
\Dt\int_{\RR^d} |\sigma(t)|\,dx \leq G_0P_M \int_{\RR^d} |\sigma(t)|\,\dx.
$$
Note that $\sigma(0)=0$, since by $(i)$ $0\leq \rho^0\leq 1$. Therefore, using the Gronwall lemma we conclude that $0=|\sigma|=(1-\rho)\mathbf{1}_{\rho\geq 1}$ which implies $(ii)$.

For the reverse implication, $(ii)$ $\Rightarrow$ $(i)$ we proceed as follows. Since $\rho$
is bounded, equation \eqref{eq:betarho} holds for any $C^1$ function $\beta$.
In particular, for $\beta(\rho)=\rho^k$, for any integer $k$, we get
$$
\pa_t \rho^k + \dv(\rho^k \vu) = \left[(1-k) \dv \vu + k G(p)\right]\rho^k. 
$$
By $(ii)$ $0\leq \rho^k \leq 1$, thus $\pa_t \rho^k$ is bounded in $W^{-1,\infty}((0,T)\times\RR^d)$.
Since $|\rho^k \vu|\leq |\rho \vu|$, we deduce that $\dv(\rho^k \vu)$
is bounded in $L^\infty(0,T;H^{-1}_{loc}(\RR^d))$, and because $|\rho^k \dv \vu| \leq |\dv \vu|$, $\rho^k\dv\vu$ is bounded in $L^2_{loc}([0,T]\times\RR^d)$. This means that $k\rho^k (G(p)-\dv\vu)$ is a distribution bounded uniformly with respect to $k$.
We deduce that we can pass into the limit $k\to\infty$ we therefore obtain
$$
\rho^k(G(p) - \dv \vu) \rightharpoonup 0, \qquad \mbox{ in the sense of distributions.}
$$
Moreover, we have that $\rho^k\to \mathbf{1}_{\rho=1}$ a.e., it implies that
$$
\rho^k(G(p) - \dv \vu) \to (G(p) - \dv \vu)\mathbf{1}_{\{\rho=1\}}\quad \mbox{ a.e. in } (0,T)\times\RR^d.
$$ 
Comparing the limits we obtain $G(p)=\dv \vu$ a.e. on $\{\rho=1\}$, which implies $(i)$. 
\end{proof}

\section{About existence of solutions}\label{sec:existence}
In this section we explain the main steps leading to the construction of the weak solutions from Definition \ref{weaksol}. We will explain how this solution can be obtained by chain of approximations of system \eqref{sys:main}, inluding parabolic regularization of the continuity equation and the Faedo-Galerkin approximation of the momentum equation.

\subsection{Existence of solutions to system with additional dissipation}
The weak solution from Definition \ref{weaksol} will be obtained as a limit $(\vr,\vu)$ as $\eps\to0^+$ of the weak solutions $(\vr_\ep,\vu_\ep)$ to the following system with artificial viscosity
\begin{subequations}\label{sys:eps}
\begin{align}
\label{eq:rhoeps}
&\pa_t \rho_\ep + \dv(\rho_\ep \vu_\ep) = \rho_\ep G(p_\ep) + \eps \Delta \rho_\ep,  \\
\label{eq:rhoueps}
&\pa_t (\rho_\ep \vu_\ep) + \dv ( \rho_\ep \vu_\ep \otimes \vu_\ep) + \nabla p(\vr_\ep) - \mu \Delta \vu_\ep - \xi \nabla \dv \vu_\ep =
 \rho_\ep \vu_\ep G(p(\vr_\ep)) - \eps \nabla \rho_\ep \cdot \nabla \vu_\ep.
\end{align}
\end{subequations}
The existence of solutions to system \eqref{sys:eps} is guaranteed by the following theorem.
\begin{theorem}\label{Th:ep}
Let $T>0$, and $\gamma\geq2$, $\eps>0$ be fixed. Let the initial conditions be given by \eqref{hypini}. Then, there exists a weak solution $(\vr_\ep,\vu_\ep)$ to the system \eqref{sys:eps} with the boundary conditions \eqref{hypbound}, the pressure given by \eqref{eq:p} and $G$ given by \eqref{hypG}. More precisely, the following norms on $\vr_\ep$ and $\vu_\ep$ are bounded uniformly in $\ep$:
\begin{subequations}\label{estim:eps}
\begin{align}
&\|\vr_\eps\|_{L^\infty(0,T; L^\gamma(\RR^d))}+\|\vr_\eps\|_{L^{2\gamma}((0,T)\times \RR^d)}\leq C,\label{es:rhoep}\\
&\sqrt{\eps}\|\Grad\vr_\eps\|_{L^{2}((0,T)\times \RR^d)}+\sqrt{\eps}\|\Grad\vr_\eps^{\frac{\gamma}{2}}\|_{L^{2}((0,T)\times \RR^d)}\leq C,\label{es:gradrhoep}\\
 &\|\sqrt{\rho_\eps}\vu_\eps\|_{L^\infty(0,T; L^2(\RR^d))}+\|\vu_\eps\|_{L^2(0,T; H_{loc}^{1}(\RR^d))}\leq C\label{es:uep},
\end{align}
\end{subequations}
and $\vr_\ep$, $\vu_\ep$ satisfy the equations \eqref{sys:eps} in the sense of distributions. 
\end{theorem}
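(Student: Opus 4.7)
The plan is to construct $(\vr_\eps,\vu_\eps)$ by the standard Lions--Feireisl scheme for compressible Navier--Stokes systems with artificial viscosity, adapted to the whole space and to the growth terms on the right-hand sides. Because $\eps>0$ is fixed, the continuity equation \eqref{eq:rhoeps} is genuinely parabolic and the troublesome compactness issues of Section~\ref{sec:compactness} do not appear; classical renormalization and Aubin--Lions arguments suffice. I would first work on a bounded ball $B_R\subset \RR^d$ with homogeneous Dirichlet data for $\vu$ and homogeneous Neumann data for $\vr_\eps$ (compatible with \eqref{hypbound}), and only at the very end exhaust $\RR^d$ by letting $R\to\infty$ using a diagonal extraction with $R$-independent estimates.

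At the level of $B_R$, I would apply a Faedo--Galerkin approximation to the momentum equation: choose a smooth basis $\{e_j\}_{j\geq1}$ of $W^{1,2}_0(B_R;\RR^d)$ and look for $\vu_{n}(t)=\sum_{j=1}^n \alpha_j(t)\, e_j$ solving the $n$-dimensional projection of \eqref{eq:rhoueps}. For a frozen $\vu_{n}$, the parabolic equation \eqref{eq:rhoeps} admits a unique nonnegative classical solution $\vr_\eps[\vu_n]$ by standard linear parabolic theory (nonnegativity comes from a Stampacchia truncation using $G$ bounded by $G_0P_M$, as in Lemma~\ref{lem:nrj}(i)). Substituting back yields a closed ODE system in $\alpha=(\alpha_1,\dots,\alpha_n)$ whose right-hand side is continuous and bounded on bounded sets; a Schauder fixed-point argument produces a local-in-time solution, which is extended to $[0,T]$ thanks to the $n$-uniform energy identity obtained exactly as in \eqref{estim1E}.

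I would then pass to the Galerkin limit $n\to\infty$. The energy identity gives $\vu_n$ uniformly bounded in $L^2(0,T;H^1(B_R))$, and testing \eqref{eq:rhoeps} against $\vr_\eps$ and then $\vr_\eps^{\gamma-1}$ (using the renormalization property \eqref{eq:betarn} applied to $\beta(y)=y^2$ and $\beta(y)=y^\gamma$) produces the $\eps$-weighted estimates \eqref{es:gradrhoep}. These bounds, together with the time derivative information extracted from \eqref{eq:rhoeps}--\eqref{eq:rhoueps} (which live in a negative Sobolev space), allow Aubin--Lions to deliver strong $L^2$ compactness of both $\vr_\eps$ (thanks to the genuine $\eps\,\lap\vr$ regularization) and $\rho_\eps\vu_n$ in $B_R$; this is enough to pass to the limit in every nonlinear term, including the growth terms $\rho G(p)$ and $\rho\vu\, G(p)$, since $G$ is globally Lipschitz. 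The limit $R\to\infty$ then follows by cutoff testing and a diagonal argument, using that all bounds are $R$-independent and that the initial data \eqref{hypini} already sit in $L^1(\RR^d)$. The uniform estimates \eqref{es:rhoep}--\eqref{es:uep} are finally read off as follows: \eqref{es:rhoep}$_1$ and \eqref{es:uep} come from the energy inequality of Lemma~\ref{lem:nrj}; the higher integrability $\vr_\eps\in L^{2\gamma}$ in \eqref{es:rhoep}$_2$ is obtained by repeating the argument of Lemma~\ref{lem:pL2} on the approximate equation \eqref{eq:rhoeps} (the extra $\eps\,\lap\vr_\eps$ contributes a nonnegative term after multiplication by $\beta_K'(\vr_\eps)$ and can be dropped); and \eqref{es:gradrhoep} has already been secured at the Galerkin level.

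The main technical obstacle I anticipate is not the existence theory itself (which is quite standard once $\eps>0$) but the clean transfer of all estimates through the chain Galerkin $\to$ $R\to\infty$ without losing the uniformity in $\eps$. Two delicate points deserve extra care: the extra term $-\eps\,\Grad\vr_\eps\cdot\Grad\vu_\eps$ on the right-hand side of \eqref{eq:rhoueps}, which must be shown to be compatible with the energy identity (this is the whole reason it appears there in the first place, as the same term arises when one multiplies \eqref{eq:rhoeps} by $\tfrac12|\vu_\eps|^2$ and subtracts); and the treatment of the production terms $\vr_\eps G(p_\eps)$ and $\vr_\eps\vu_\eps G(p_\eps)$ during the Galerkin limit, for which one uses that $p_\eps\geq0$ and $G$ is affine, so that the strong convergence of $\vr_\eps$ and $\vu_\eps$ obtained via Aubin--Lions is sufficient. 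Once these points are handled, the full weak formulation of \eqref{sys:eps} in the sense of distributions is verified in the standard way.
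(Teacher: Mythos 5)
Your proposal follows essentially the same route as the paper: invading bounded domains $B_R\to\RR^d$ with Dirichlet data for $\vu_\eps$ and Neumann data for $\vr_\eps$, Faedo--Galerkin with a frozen-velocity/parabolic step and a fixed-point argument, the energy identity (with the crucial cancellation involving $-\eps\,\nabla\vr_\eps\cdot\nabla\vu_\eps$), Aubin--Lions for the Galerkin limit, and a final diagonal argument in $R$. The only cosmetic difference is in the $L^{2\gamma}$ bound: since $\gamma$ is now fixed and $\vr_\eps$ is smooth at the approximate level, the paper skips the $\beta_K$-truncation of Lemma~\ref{lem:pL2} and simply multiplies \eqref{eq:rhoeps} by $\gamma\vr_\eps^{\gamma-1}$, integrates, and reads off the good-sign term $-\gamma G_0\int p_\eps^2$ from $G(p_\eps)=G_0(P_M-p_\eps)$ together with the dissipative term $\eps\gamma(\gamma-1)\vr_\eps^{\gamma-2}|\nabla\vr_\eps|^2$; your version (dropping the sign-favourable $\eps\beta_K''|\nabla\vr_\eps|^2$ contribution and then sending $K\to\infty$) also works but is more circuitous.
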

\begin{proof}
The solution to system \eqref{sys:eps} can be constructed using the  invading domains approach described in \cite[Chapter 7]{NS}. This means to find the solution to \eqref{sys:eps} on a bounded domain $\Omega_R=B(0,R)$ first and then to let $R\to \infty$. To prove that \eqref{sys:eps} has a weak solution on $\Omega_R$, we need to supplement the system  with Dirichlet boundary conditions for $\vu_\ep$ and the zero Neumann boundary condition for $\vr_\ep$. The weak solutions to such problem can be constructed by the Faedo-Galerkin discretization of the momentum equation \eqref{eq:rhoueps} and the fixed point argument.  The details of the last two steps are only slight modification of the procedure from \cite{NS} as all the additional terms related to $G(p_\ep)$ are of lower order and the basic a-priori estimates are still valid.

Saying this, let us recall that at the level of Faedo-Galerkin approximation $\vu_\ep$ is a suitable test function for the momentum equation and the continuity equation is satisfied pointwisely. Therefore, the energy estimate can be justified rigorously and it implies the following uniform in $\ep$ bounds.
\begin{lemma}\label{lem:nrj2}
Under assumptions \eqref{hypini} and \eqref{hypG}, let $T>0$ and $\eps>0$ be fixed,
then there exists a nonnegative constant $C$ (uniform in $\eps$) such that
the weak solution $(\rho_\eps,\vu_\eps)$ of Theorem \ref{Th:ep} satisfies,
for all $t\in [0,T]$,
\eq{\label{est:uep}
\calE_\ep(t) + \int_0^t \calJ_\ep(s)\,{\rm{d}}s 
+ \eps \gamma \int_0^t\int_{\RR^d} \rho_\ep^{\gamma-2} |\nabla \rho_\ep|^2 \dx\, {\rm d}s
\leq (\calE_\ep(0) + Ct) e^{G_0P_M t},
}
with $\calE_\ep(t) $ and $\calJ_\ep(t) $ defined in \eqref{energy}, \eqref{dissip}.
\end{lemma}
\begin{proof}
The proof of this fact follows exactly the proof of the energy estimate \eqref{est:energy}. The extra term in the momentum form $\eps \nabla \rho_\ep \cdot \nabla \vu_\ep$ allows to cancel the extra term coming from multiplication of the continuity equation by $\frac{|\vu_\ep|^2}{2}$.
\end{proof}

We can also easily check that the estimate of the pressure from Lemma \ref{lem:pL2} is valid. 
Indeed, multiplying \eqref{eq:rhoeps} by $\gamma \rho_\eps^{\gamma-1}$, we deduce the equation for the pressure 
\begin{equation}\label{edp:peps}
\pa_t p_\eps + \gamma p_\eps \dv \vu_\eps + \vu_\eps\cdot \nabla p_\eps =
\gamma p_\eps G(p_\eps) + \eps \Delta p_\eps - \eps \gamma(\gamma-1) \rho_\eps^{\gamma-2} |\nabla \rho_\eps|^2.
\end{equation}

\begin{lemma}\label{lem:eseps}
Let $\gamma\geq 2$ and let the initial conditions satisfy \eqref{hypini}. Then there exists a positive constant $C$ such that uniformly with respect to $\ep$ 
we have
\eq{\label{est:pep}
\|\vr_\ep^\gamma\|_{L^\infty(0,T; L^1(\RR^d))}+\|\vr_\ep^\gamma\|^2_{L^2((0,T)\times\RR^d)}+\ep\|\sqrt{p''(\vr_\ep)}\Grad\vr_\ep\|_{L^2((0,T)\times\RR^d)}^2\leq C.}
Moreover, uniformly with respect to $\ep$ we have
\eq{\label{eps_rho}
\|\vr_\ep\|_{L^\infty(0,T; L^1(\RR^d))}+\sqrt{\ep}\|\Grad\vr_\ep\|_{L^2((0,T)\times\RR^d)}\leq C.
}
\end{lemma}
\begin{proof}
The proof of the first estimate \eqref{est:pep} follows directly by an integration of \eqref{edp:peps} over $\Omega_R$ and by letting $R\to\infty$.
The proof of the first part in estimate \eqref{eps_rho} follows directly by integration of \eqref{eq:rhoeps} over the space. To prove the second bound in \eqref{eps_rho}, we multiply the continuity equation \eqref{eq:rhoeps} by $\rho_\eps$. Integrating by parts we obtain
\eqh{
&\frac{1}{2}\|\vr_\ep(T)\|_{L^2(\RR^d)}^2+\ep\intT{\|\Grad\vr_\ep\|^2_{L^2(\RR^d)}}+G_0\intT{\|\vr_\ep\|^{\gamma+2}_{L^{\gamma+2}}}\\
&=\frac{1}{2}\|\vr_\ep(0)\|_{L^2(\RR^d)}^2+G_0P_M\intTO{\vr_\ep^2}-\frac{1}{2}\intTO{\vr_\ep^2\Div\vu_\ep}.
}
The last two terms can be bounded using \eqref{est:uep} and \eqref{est:pep}, on account of the fact that $\gamma\geq2$.
\end{proof}

\medskip
With these estimates at hand, the proof of Theorem \ref{Th:ep} is complete.
\end{proof}

\subsection{Passage to the limit $\ep\to 0$}

Existence of weak solutions to our initial system \eqref{sys:main} is then obtained by passing to the limit $\ep\to 0$.
\begin{theorem}\label{Th:exist}
Let $T>0$, and $\gamma$ large enough be fixed. Let the initial conditions be given by \eqref{hypini}. Then, there exists a weak solution $(\vr,\vu)$ to the system \eqref{sys:main} in the sense of Definition \ref{weaksol}, with the boundary conditions \eqref{hypbound}, the pressure given by \eqref{eq:p} and $G$ given by \eqref{hypG}.
\end{theorem}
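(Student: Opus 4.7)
The plan is to pass to the limit $\varepsilon \to 0$ in the approximate system \eqref{sys:eps}, combining the standard vanishing viscosity arguments for the compressible Navier--Stokes system with the Bresch--Jabin compactness machinery already developed in Section~\ref{sec:compactness}. First, I would extract weak limits from the uniform estimates \eqref{estim:eps}, \eqref{est:pep}, \eqref{eps_rho}: up to subsequences $\rho_\varepsilon \rightharpoonup \rho$ weakly-$*$ in $L^\infty(0,T;L^\gamma(\RR^d))$ and weakly in $L^{2\gamma}((0,T)\times\RR^d)$, $\vu_\varepsilon \rightharpoonup \vu$ weakly in $L^2(0,T;H^1_{loc}(\RR^d))$, and $\sqrt{\rho_\varepsilon}\vu_\varepsilon \rightharpoonup \sqrt{\rho}\vu$ weakly-$*$ in $L^\infty(0,T;L^2(\RR^d))$. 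The artificial dissipation terms $\varepsilon\Delta\rho_\varepsilon$ and $\varepsilon\nabla\rho_\varepsilon\cdot\nabla\vu_\varepsilon$ vanish in $\mathcal{D}'$ thanks to the bound $\sqrt{\varepsilon}\|\nabla\rho_\varepsilon\|_{L^2}\leq C$ from \eqref{eps_rho} combined with $\|\vu_\varepsilon\|_{L^2 H^1_{loc}}\leq C$.

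Next I would promote weak to strong convergence of the density. The argument of Proposition~\ref{prop:rho} does not use the specific scaling $\gamma_n\to\infty$; with $\gamma\geq 2$ fixed, $p_\varepsilon = \rho_\varepsilon^\gamma$ is monotone in $\rho_\varepsilon$, bounded in $L^2((0,T)\times\RR^d)$ by \eqref{est:pep}, and the relation \eqref{link:up} between $\dv\vu_\varepsilon$ and $p_\varepsilon + \mathcal{D}(\rho_\varepsilon\vu_\varepsilon)$ remains valid (the extra term $\varepsilon\nabla\rho_\varepsilon\cdot\nabla\vu_\varepsilon$ is absorbed by the same procedure as in \cite{BJ}). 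I would therefore reproduce the weighted defect functional $\mathcal{R}_{h_0}(t)$ with weights $w_\varepsilon$ solving \eqref{eq:weight}, and re-derive \eqref{boundRh0} with a uniform-in-$\varepsilon$ right-hand side; the only new contribution comes from $\varepsilon \iint K_h(x-y)(\Delta\rho_\varepsilon(x)-\Delta\rho_\varepsilon(y))(\rho_\varepsilon(x)-\rho_\varepsilon(y))(w(x)+w(y))\,dxdy$, which, after integration by parts, produces a nonpositive term plus commutator errors of order $O(\sqrt{\varepsilon})$ using the bound $\sqrt{\varepsilon}\|\nabla\rho_\varepsilon\|_{L^2}\leq C$. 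The weight-removal procedure of Section~\ref{sec:remove} then applies verbatim, yielding $\rho_\varepsilon \to \rho$ strongly in $L^q_{loc}([0,T]\times\RR^d)$ for any $q\in[1,2\gamma)$ by interpolation with \eqref{est:pep}. In particular $p(\rho_\varepsilon) \to p(\rho)$ in $L^1_{loc}$ and $\rho_\varepsilon G(p_\varepsilon)\to \rho G(p)$.

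For the momentum equation, standard energy estimates yield $\partial_t(\rho_\varepsilon\vu_\varepsilon)$ bounded in $L^2(0,T;W^{-1,q}(\RR^d))$ for some $q>1$, and combined with $\vu_\varepsilon\in L^2(0,T;H^1_{loc})$ the Lions compensated-compactness lemma recalled in Section~\ref{sec:limit} gives $\rho_\varepsilon\vu_\varepsilon\otimes\vu_\varepsilon \rightharpoonup \rho\vu\otimes\vu$ in $\mathcal{D}'$. The strong convergence of $\rho_\varepsilon$ together with uniform bounds on $\sqrt{\rho_\varepsilon}\vu_\varepsilon$ allows one to identify $\sqrt{\rho_\varepsilon}\vu_\varepsilon \to \sqrt{\rho}\vu$ strongly in $L^2_{loc}$ as in Section~\ref{sec:limit}, and hence $\rho_\varepsilon\vu_\varepsilon G(p_\varepsilon)\to \rho\vu G(p)$ in the sense of distributions. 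Finally, one verifies that the initial traces are attained by weak continuity in time, deducing \eqref{eq:NS1}--\eqref{eq:NS2} together with the energy inequality (which passes to the limit by lower semicontinuity of the dissipation $\mathcal{J}$ and weak continuity of the kinetic and internal energies).

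The main obstacle is precisely the density compactness step: for fixed $\gamma$, the classical Lions--Feireisl approach via the effective viscous flux is delicate in the presence of the nonzero production term $\rho G(p)$ (as already observed by the authors), and the Bresch--Jabin argument has to be re-checked so that its various commutator and stress-tensor estimates (Lemma~\ref{lem83} in particular) absorb the extra $\varepsilon$-dissipation terms. Once the compactness of $\rho_\varepsilon$ in $L^q_{loc}$ is secured uniformly in $\varepsilon$, the remaining passage to the limit is routine, and Theorem~\ref{Th:exist} follows by selecting a diagonal subsequence as $R\to\infty$ (invading domains) and $\varepsilon\to 0^+$.
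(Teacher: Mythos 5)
Your overall plan matches the paper's: establish Theorem~\ref{Th:ep} on $\Omega_R$ via Faedo--Galerkin, extend by invading domains, then pass $\eps\to 0$, the crux being to re-run the Bresch--Jabin compactness argument at fixed $\gamma$ with the extra $\eps$-dissipation. However, there is a concrete gap in the way you propose to handle the $\eps\Delta\rho_\eps$ term in the weighted defect functional. You claim that reusing the weights defined by \eqref{eq:weight} and integrating by parts the quantity $\eps \iint K_h(x-y)(\Delta\rho_\eps(x)-\Delta\rho_\eps(y))(\rho_\eps(x)-\rho_\eps(y))(w(x)+w(y))\,dx\,dy$ yields a nonpositive term plus commutators of size $O(\sqrt\eps)$. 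But this integration by parts produces factors of $\nabla w$, and the weight solving the pure transport equation \eqref{eq:weight} with $\vu_\eps \in L^2_t H^1_{loc}$ carries no gradient estimate whatsoever; the ``commutator error'' is therefore not controlled and the step does not close.

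The paper fixes this with two modifications you did not identify. First, the weight equation is changed to the parabolic equation \eqref{eq:weighteps}, adding $\eps\Delta w_\eps$, so that the weight's evolution mimics the density's and the dangerous diffusion terms cancel at the level of $\rho_\eps|\log w_\eps|$ (see Lemma~\ref{lem:weps}, in particular the entropy-type estimate \eqref{es:nrr} for $\eps\int |\nabla\rho_\eps|^2/\rho_\eps$). Second, and crucially, the weight appearing in the defect functional $R(t)$ is mollified: one replaces $w(x)+w(y)$ by $W_h(x,y)=\overline{K_h}*w_\eps(x)+\overline{K_h}*w_\eps(y)$. This mollification makes $\Delta W_h$ pointwise controlled and confines the new contribution to the single term $A_4$, estimated by $C\eps h^{-2}\|K_h\|_{L^1}$ via $\Delta K_h \leq Ch^{-2}K_h$; that $\eps/h_0^2$ remainder then vanishes in the order of limits $\eps\to 0$ before $h_0\to 0$. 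Without the mollification, $\Delta W_h$ would involve $\Delta w_\eps$, which is not available. Finally, you dismiss the effective-viscous-flux correction too quickly: relation \eqref{link:up} acquires the extra term $F_\eps=\eps(-\Delta)^{-1}\dv\dv(\vu_\eps\otimes\nabla\rho_\eps)$, and the paper handles it not by ``absorbing'' it into Lemma~\ref{lem83} but by showing $\eps^{-1/4}F_\eps\to 0$ strongly in $L^1_{loc}$, converting the resulting contribution into the quantity $\epsilon_F(h)$ whose $\limsup$ vanishes via Lemma~\ref{lem:compact}. These three modifications --- the $\eps$-diffusive weight equation, the $\overline{K_h}$-mollified weights in $R(t)$, and the explicit treatment of $F_\eps$ --- are the essential new ideas in Section~\ref{sec:existence}, and your proposal is missing all three.
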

\begin{proof}
In order to perform the passage to the limit $\ep \to 0$ in the equations of system \eqref{sys:eps} first note that all the $\eps$-related terms converge to $0$ in the distributional formulation of the system. More precisely, from \eqref{es:gradrhoep} and \eqref{es:uep} it follows that
\eqh{&\ep\Grad\vr_\ep\to 0\quad \mbox{strongly in } L^2((0,T)\times\Omega),\\
&\ep\Grad\vr_\ep\cdot\Grad\vu_\ep\to 0\quad \mbox{strongly in } L^1((0,T)\times\Omega).
}
To pass to the limit in the rest of the terms of system \eqref{sys:eps}, one needs to combine the arguments from Section \ref{sec:limit} with the compactness of the sequence approximating the density $\{\vr_\ep\}_{\ep>0}$.
Note, that in Section \ref{sec:limit} we were using the property \eqref{conv_2} which is not available for $\gamma$ fixed. However, taking $\gamma$ sufficiently large one can still repeat all of the steps. The important changes concern solely the 
compactness argument for the sequence $\{\vr_\ep\}_{\ep>0}$.
Then in the rest of the proof, we only explain how to modify the method presented in Section \ref{sec:compactness} to handle the extra $\eps$-related terms and get compactness for the sequence $\{\vr_\ep\}_{\ep>0}$.

\subsubsection{Modified definition of the weights}

We first modify the weight by replacing the equation \eqref{eq:weight} into
\begin{equation}\label{eq:weighteps}
\pa_t w_\eps + \vu_\eps \cdot \nabla w_\eps = -\lambda B_\eps w_\eps+\eps\Delta w_\eps, 
\qquad B_\eps = M|\nabla \vu_\eps|,
\end{equation}
complemented with the initial data $w_\eps(t=0)=1$.
Here $\lambda$ is some nonnegative constant which will be fixed later on.
We establish a similar property as Proposition \ref{prop:w} for this weight.
\begin{lemma}\label{lem:weps}
Let us assume that $\vu_\eps$ is given and uniformly bounded with respect to $\eps$ in $L^2_{loc}([0,T]\times\RR^d)\cap L^\infty(0,T;H^1(\RR^d))$.
Then, there exists a unique solution to \eqref{eq:weighteps}. Moreover, we have
\begin{itemize}
\item[(i)] For any $(t,x)\in(0,T)\times\RR^d$, $0\leq w_\eps(t,x) \leq 1$.
\item[(ii)] If we assume moreover that the pair $(\rho_\eps,\vu_\eps)$ solves \eqref{eq:rhoeps} and $\vr_\eps$
is uniformly bounded in $L^\infty([0,T]; L^1\cap L^\gamma(\RR^d))$ for $\gamma\geq 2$, then there exists $C\geq 0$, such that
$\intO{ \rho_\eps |\log w_\eps| } \leq C.$
\end{itemize}
\end{lemma}
\begin{proof}
$(i)$ Since \eqref{eq:weighteps} is a parabolic equation with $B_\eps$ nonnegative and with initial data $w_\eps(t=0)=1$, we have that $0\leq w_\eps(t,x) \leq 1$. 

$(ii)$ Since $w_\eps \leq 1$, $|\log w_\eps| = -\log w_\eps$, then we have from \eqref{eq:rhoeps}, \eqref{eq:weighteps},
\begin{align*}
\pa_t (\rho_\eps |\log w_\eps|) + \dv (\rho_\eps \vu_\eps |\log w_\eps|) =
&\ \lambda B_\eps \rho_\eps + \rho_\eps |\log w_\eps| G(p_\eps) 
+ \eps \Delta (\rho_\eps |\log w_\eps|)  \\
& - 2 \eps \nabla \rho_\eps \cdot \nabla |\log w_\eps| - \eps \rho_\eps |\nabla \log w_\eps |^2.
\end{align*}
Integrating with respect to space, and using \eqref{hypG}, we obtain
\begin{align}
\frac{d}{dt} \int_{\RR^d} \rho_\eps |\log w_\eps| \,\dx \leq &
\int_{\RR^d} \lambda B_\eps \rho_\eps \,\dx + G_0 P_M \int_{\RR^d} \rho_\eps |\log w_\eps| \,\dx  \nonumber \\
& - \eps \int_{\RR^d} \rho_\eps |\nabla \log w_\eps|^2 \,\dx - 2\eps \int_{\RR^d} \nabla \rho_\eps \cdot \nabla |\log w_\eps| \,\dx.
\label{eq:rholnw}
\end{align}
From $|\log w_\eps| = -\log w_\eps$, the Cauchy-Schwarz and the Young inequalities, we have 
\begin{equation}\label{eq:rholnrho}
2 \ep\left|\int_{\RR^d} \nabla \rho_\eps \cdot \nabla |\log w_\eps| \,\dx\right|
\leq \frac \ep2 \int_{\RR^d} \rho_\eps |\nabla \log w_\eps|^2 \,\dx
+\ep \int_{\RR^d} \frac{|\nabla \rho_\eps|^2}{\rho_\eps}\,\dx.
\end{equation}
Moreover, from \eqref{eq:rhoeps}, we deduce
\begin{align*}
\frac{d}{dt} \int_{\RR^d} \rho_\eps \log \rho_\eps \,\dx 
+ \int_{\RR^d} \rho_\eps \dv \vu_\eps \,\dx 
& = \int_{\RR^d} \rho_\eps(\log \rho_\eps + 1) G(p_\eps) \,\dx 
- \eps \int_{\RR^d} \frac{|\nabla \rho_\eps|^2}{\rho_\eps} \,\dx  \\
& \leq G_0(P_M+1) \int_{\RR^d} \rho_\eps(|\log \rho_\eps| + 1) \,\dx 
- \eps \int_{\RR^d} \frac{|\nabla \rho_\eps|^2}{\rho_\eps} \,\dx.
\end{align*}
Since $\rho_\eps$ is uniformly bounded in $L^\infty([0,T]; L^1(\RR^d)\cap L^\gamma(\RR^d))$ for $\gamma\geq 2$,
 then $\rho_\eps\log \rho_\eps$ is uniformly bounded in $L^\infty([0,T];L^1(\RR^d))$.
Moreover, $\dv \vu_\eps$ is uniformly bounded in $L^2([0,T]\times\RR^d)$, 
therefore, we deduce after an integration in time of the above inequality,
that there exists a nonnegative constant $C$ such that
\begin{equation}\label{es:nrr}
\eps \intTO{\frac{|\nabla \rho_\eps|^2}{\rho_\eps}} \leq  C.
\end{equation}
Integrating \eqref{eq:rholnw} with respect to time, inserting \eqref{eq:rholnrho} and \eqref{es:nrr}, we conclude the proof since $B_\eps$ and $\rho_\eps$ are uniformly bounded in $L^2([0,T]\times\RR^d)$.
\end{proof}

\subsubsection{Changes in the compactness argument}
To prove the local compactness of the sequence $\{\rho_\eps\}_{\eps>0}$, we adapt the argument of Section \ref{sec:compactrho}. We explain briefly the main change in the proof.
Starting from the transport equations \eqref{eq:rhoeps} satisfied by $\rho_\eps(x)$ and $\rho_\eps(y)$, making the difference and multiplying by $(\rho_\eps(x)-\rho_\eps(y))$, we deduce
\begin{align*}
&\frac{1}{2}\pa_t (\rho_\eps(x)-\rho_\eps(y))^2 + \frac{1}{2}\dv_x (\vu_\eps(x)\lr{\rho_\eps(x)-\rho_\eps(y)}^2) +
\frac{1}{2}\dv_y (\vu_\eps(y)\lr{\rho_\eps(x)-\rho_\eps(y)}^2) \\
=&- \frac 12 (\dv_x \vu_\eps(x)-\dv_y \vu_\eps(y))(\rho_\eps(x)+\rho_\eps(y)) \lr{\rho_\eps(x)-\rho_\eps(y)} \\
&+ \lr{\rho_\eps(x) G(p_\eps(x)) -\rho_\eps(y) G(p_\eps(y))} \lr{\rho_\eps(x)-\rho_\eps(y)}  \\
&+ \frac{\eps}{2} \Delta_{x,y} (\rho_\eps(x)-\rho_\eps(y))^2 - \eps |\nabla_{x,y}(\rho_\eps(x)-\rho_\eps(y))|^2.
\end{align*}
Following the reasoning of Section \ref{sec:compactrho}, we arrive at the analogue of \eqref{eq:drho1} with an extra term due to artificial viscosity
\eq{\label{eq:drho12}
&\frac{1}{2}\pa_t (\rho_\eps(x)-\rho_\eps(y))^2 + \frac{1}{2}\dv_x (\vu_\eps(x)\lr{\rho_\eps(x)-\rho_\eps(y)}^2) +
\frac{1}{2}\dv_y (\vu_\eps(y)\lr{\rho_\eps(x)-\rho_\eps(y)}^2) \\
\leq&- \frac 12 (\dv_x \vu_\eps(x)-\dv_y \vu_\eps(y))(\rho_\eps(x)+\rho_\eps(y)) \lr{\rho_\eps(x)-\rho_\eps(y)} \\
&+ G_0P_M\lr{\rho_\eps(x)-\rho_\eps(y)}^2 + \frac{\eps}{2} \Delta_{x,y} (\rho_\eps(x)-\rho_\eps(y))^2. 
}
Then, we introduce the regularization  of the weights $w_\eps$ satisfying \eqref{eq:weighteps}
$$
W_h(x,y) = \overline{K_h}*w_\eps(x) +  \overline{K_h}*w_\eps(y).
$$
We now take
$$
R(t) = \frac12\iintO{K_h(x-y) \lr{\rho_\eps(x)-\rho_\eps(y)}^2 W_h(x,y)},
$$
and
$$
\calR_{h_0}(t) = \frac12\int_{h_0}^1\iintO{\overline{K_{h}}(x-y) \lr{\rho_\eps(x)-\rho_\eps(y)}^2 W_h(x,y)} \frac{{\rm d}h}{h}
= \frac{1}{\|K_h\|_{L^1}} \int_{h_0}^1 R(t)\frac{{\rm d}h}{h}.
$$
Using \eqref{eq:drho12} and the symmetry of $K_h$, we deduce
\begin{equation}\label{eqdtReps}
\frac{d}{dt} R(t) \leq A_1 + A_2 + A_3 + A_4 + G_0P_M R(t),
\end{equation}
where
$$
A_1 = \frac12\iintO{\nabla K_h(x-y) (\vu_\eps(x)-\vu_\eps(y)) \lr{\rho_\eps(x)-\rho_\eps(y)}^2 W_h(x,y)},
$$
$$
A_2 =  \iintO{ K_h(x-y)\lr{\rho_\eps(x)-\rho_\eps(y)}^2 \overline{K_h}*(\pa_t w(y) + \vu_\eps(y)\cdot\nabla w_\eps(y) -\eps \Delta w_\eps(y))},
$$
$$
A_3 = -2 \iintO{ K_h(x-y)(\dv \vu_n(x) - \dv \vu_n(y)) \rho_n(x) \lr{\vr_n(x)-\vr_n(y)}\rho_n(x)\overline{K_h}*w_\eps(x) },
$$
$$
A_4 = \eps \iintO{ \lr{\Delta K_h(x-y) W_h(x,y) + K_h(x-y) \Delta W_h(x,y)} \lr{\vr_n(x)-\vr_n(y)}^2 }.
$$
Inequality \eqref{eqdtReps} is the equivalent to \eqref{eqdtR} derived in Section \ref{sec:compactrho} for no artificial viscosity case.

We estimate the new term $A_4$ by noticing that by definition of $K_h$ we have $\Delta K_h\leq \frac{C}{h^2} K_h$.
Then we may bound
\begin{equation}\label{es:A4}
A_4 \leq \frac{C\eps}{h^2} \|K_h\|_{L^1(\RR^d)}.
\end{equation}
The terms $A_1$ and $A_2$ may be estimated as before. For the term $A_3$, the estimate should be adapted 
since the relation \eqref{link:up} is not valid anymore. Indeed, there is an extra term
$$
(\mu+\xi) \dv \vu_\eps = p_\eps + \calD(\rho_\eps \vu_\eps) + F_\eps,
$$
where $F_\eps = \eps (-\Delta)^{-1} (\dv(\dv(\vu_\eps\otimes \nabla \rho_\eps))).$

Hence, we arrive at the following  equivalent of \eqref{estimdtR0},
\eq{\label{es:R0eps}
\Dt\calR_{h_0}(t) \leq & \ G(0) \calR_{h_0}(t) + C
\int_{h_0}^1 \int_{\RR^{d}} \overline{K_h}(z) \|D_{|z|}\vu_\ep(\cdot) - D_{|z|}\vu_\ep(\cdot+z)\|_{L^2(\RR^d)}\, {\rm d}z \,\frac{\dh}{h}  \\
& - \frac{2}{\mu+\xi} \int_{h_0}^1 \iintO{\overline{K_{h}}(x-y) \big(\calD(\rho_\eps \vu_\eps)(x)+F_\eps(x)-\calD(\rho_\eps \vu_\eps)(y)-F_\eps(y)\big) \\
& \hspace{5cm} \times \lr{\vr_\eps(x)-\vr_\eps(y)}\rho_\eps(x) \overline{K_h}*w_\eps(x)}\frac{\dh}{h}  \\
& + C \int_{h_0}^1 \eps\, \frac{\dh}{h^3}.
}
The second term on the right hand side may be controlled as before thanks to \eqref{ineq:lem63}.
To control the third term on the right hand side of \eqref{es:R0eps}, we truncate using the function 
$\phi$ as in Section \ref{sec:compactrho}. Since $\calD(\rho_\eps\vu_\eps)+F_\eps$ is uniformly bounded in
$L^2([0,T]\times \RR^d)$, we may write as before (see \eqref{ineq11}),
\eq{\label{es:next}
&-\iintTO{K_h(x-y) (\calD(\rho_\eps \vu_\eps)(x)+F_\eps(x)-\calD(\rho_\eps \vu_\eps)(y)-F_\eps(y))  \\
&\hspace{2cm}\times \rho_\eps(x)  \lr{\vr_\eps(x)-\vr_\eps(y)} \overline{K_h}*w_\ep(x)} \\
&\leq C \|K_h\|_{L^1} L^{-\alpha}\\
&\quad+\int_0^T\!\!\!\int_{\RR^{2d}}K_h(x-y) (\calD(\rho_\eps \vu_\eps)(x)-\calD(\rho_\eps \vu_\eps)(y)) \Phi_h(t,x,y)\,\dx\,\dy\,\dt \\
&\quad + \int_0^T\!\!\!\int_{\RR^{2d}}K_h(x-y) (F_\eps(x)-F_\eps(y)) \Phi_h(t,x,y)\,\dx\,\dy\,\dt
}
where the function $\Phi_h$ is defined, similarily as in \eqref{eq:Phi}, by
$$
\Phi_h(t,x,y) = \lr{\vr_\ep(y)-\vr_\ep(x)}\rho_\ep(x)  \overline{K_h}*w_\ep(x)\phi\Big(\frac{\rho_\ep(t,x)}{L}\Big)\phi\Big(\frac{\rho_\ep(t,y)}{L}\Big).
$$
By definition of the truncation $\phi$, we have that $\|\Phi_h\|_{L^\infty} \leq C L^2$. 
In particular, it allows us to use Lemma \ref{lem83} to bound the second term on the right hand side.  Here we actually use the extension of Lemma \ref{lem83} to the case when  $\vr_\ep$ satisfies the continuity equation with additional dissipation term \eqref{eq:rhoeps}. 
On account of Remark \ref{rem:Lemma8.3} and Lemma 8.3 in \cite{BJ} the resulting estimate is the same.
The last term in \eqref{es:next}, thanks to the truncation, may be bounded by
$$
2L^2 \int_0^T\!\!\!\int_{\RR^{2d}}K_h(x-y) |F_\eps(x)-F_\eps(y)|\,\dx\,\dy\,\dt.
$$
From Lemma \ref{lem:nrj2} and Lemma \ref{lem:eseps}, we deduce that the sequence $\{\frac{1}{\sqrt{\eps}} F_\eps\}_{\eps>0}$ is uniformly (with respect to $\eps$) bounded in $L^1_{loc}([0,T]\times\RR^d)$. Therefore the sequence $\{\eps^{-1/4} F_\eps\}_{\eps>0}$ converges to $0$ strongly, and therefore is compact in $L^1_{loc}([0,T]\times\RR^d)$. 
On account of Lemma \ref{lem:compact} it implies that for
$$
\epsilon_F^{}(h) :=  \frac{\eps^{-1/4}}{\|K_h\|_{L^1}}\int_0^T\!\!\!\int_{\RR^{2d}}K_h(x-y) |F_\eps(x)-F_\eps(y)|\,\dx\,\dy\,\dt,
$$
we have
\begin{equation}\label{limsupeF}
\underset{\eps> 0}{\lim\sup} \ \epsilon_F^{}(h) \to 0,
  \quad \mbox{ as } h\to 0.
\end{equation}
Thus integrating in time \eqref{es:R0eps}, using \eqref{ineq:lem63} and Lemma \ref{lem83}, we arrive at
$$
e^{-G_0 P_M t}\calR_{h_0}(t) \leq \calR_{h_0}(0) + C_T |\log h_0|^{1/2} + C_T \int_{h_0}^1 \left( L^{-\alpha} + h^{\theta} L^2 + 2 L^2 \eps^{1/4} \epsilon_F^{}(h)\right)\frac{\dh}{h}
+\frac{C\eps}{{h_0}^2}.
$$
Choosing $L=h^{-\theta/(\alpha+2)}$, we deduce that there exists $\theta_0=\frac{\alpha \theta}{\alpha+2}$ such that
\begin{equation}\label{estim:calReps}
e^{-G_0 P_M t} \calR_{h_0}(t) \leq \calR_{h_0}(0) + C_T \left(|\log h_0|^{1/2} + \int_{h_0}^1 h^{\theta_0}\frac{\dh}{h} 
+\eps^{1/4}\int_{h_0}^1 h^{-\alpha \theta_0/2}\epsilon_F^{}(h)\frac{\dh}{h} + \frac{\eps}{{h_0}^2} \right).
\end{equation}
This estimate is the equivalent to estimate \eqref{boundRh0}.

\subsubsection{Removing the weights}

The last step consists in removing the weight. 
Introducing $\omega_\eta = \{x : \overline{K_h}*w_\eps \leq \eta\}$, we use the same idea as in Section \ref{sec:remove} to remove the weight $w_\eps$, using Lemma \ref{lem:weps}, and arrive at a similar estimate as \eqref{es:Khsansw}
$$
\int_{\RR^{2d}} \calK_{h_0}(x-y)\lr{\rho_\eps(x)-\rho_\eps(y)}^2\,\dx\,\dy 
\leq \frac{2}{\eta} \calR_{h_0} + \frac{C|\log h_0|}{|\log \eta|^{2\tau}},
$$
for some $\tau<\frac12$.
Then, from \eqref{estim:calReps}, we deduce, by the same token as for \eqref{eqII}, that
\eq{\label{Kfinal}
\int_{\RR^{2d}} \overline{\calK_{h_0}}(x-y)\lr{\rho_\eps(x)-\rho_\eps(y)}^2\,\dx\,\dy 
\leq \  & \frac{C_T}{\eta} \left(\frac{\calR_{h_0}(0)+ 1-h_0^{\theta_0}}{|\log h_0|}
+ |\log h_0|^{-1/2}\right)  \\
& +\frac{\eps^{1/4}}{|\log h_0|}\int_{h_0}^1 h^{-\alpha \theta_0/2}\epsilon_F^{}(h)\frac{\dh}{h} + \frac{\eps}{{h_0}^2|\log h_0|}
 + \frac{C}{|\log \eta|^{2\tau}}.
}
Since, from \eqref{limsupeF}, we deduce that $\epsilon_F^{}$ is uniformly bounded with respect to $\eps$ 
for $\eps$ small enough, we obtain
$$
\lim_{\eps\to 0} \left(\frac{\eps^{1/4}}{|\log h_0|}\int_{h_0}^1 h^{-\alpha \theta_0/2}\epsilon_F^{}(h)\frac{\dh}{h} + \frac{\eps}{{h_0}^2|\log h_0|}\right) = 0.
$$
It allows us to deal with the extra term in the right hand side of \eqref{Kfinal}. The other terms are the same as the ones in \eqref{eqII}, and so, can be treated in the same way. 
Thus, choosing $\eta=|\log h_0|^{-1/4}$, from Lemma \ref{lem:compact}, we conclude as before that the sequence $\{\rho_\eps\}_{\eps>0}$ is compact in $L^2_{loc}([0,T]\times\RR^d)$.
\end{proof}

\end{document}